\documentclass[12pt]{amsart}
\usepackage{amsmath,amssymb,amsfonts,amsthm,amsopn,dsfont}
\usepackage{graphics}


\setlength{\textwidth}{13,5cm}
\setlength{\textheight}{20cm}

\setlength{\oddsidemargin}{0pt}
\setlength{\evensidemargin}{0pt}
\setlength{\textwidth}{148 mm}   


\newcommand{\ft}{Fourier transform}

\newcommand{\modsp}{modulation space}

\newcommand{\fpc}{\Fur L^p(\R^d)_{\rm comp}}

\newtheorem{tm}{Theorem}[section]
\newtheorem{lemma}[tm]{Lemma}

\newtheorem{theorem}{Theorem}[section]
\newtheorem{corollary}[theorem]{Corollary}

\newtheorem{proposition}[theorem]{Proposition}

\newcommand{\beqa}{\begin{eqnarray*}}
\newcommand{\eeqa}{\end{eqnarray*}}

\DeclareMathOperator*{\supp}{supp}

\newcommand{\field}[1]{\mathbb{#1}}
\newcommand{\bR}{\field{R}}        
\newcommand{\bN}{\field{N}}        
\newcommand{\bZ}{\field{Z}}        
        %
        %



\def\la{\lambda}

 \def\cF{\mathcal{F}}              
 \def\cS{\mathcal{S}}

 \def\cM{\mathcal{M}}

 \def\cC{\mathcal{C}}

\def\a{\aleph}

\def\rd{\bR^d}

\def\rdd{{\bR^{2d}}}

\def\lrd{L^2(\rd)}

\def\zd{\bZ^d}

\def\intrd{\int_{\rd}}
\def\intrdd{\int_{\rdd}}

\def\R{\right)}

\def\<{\left<}
\def\>{\right>}

\def\inv{^{-1}}

\def\mv1{M_v^1}

\def\phas{(x,\o )}
\def\mn{(m,n)}
\def\mn'{(m',n')}

\hyphenation{Cara-theo-do-ry}
\hyphenation{Dau-be-chies}
\hyphenation{Barg-mann}
\hyphenation{dis-tri-bu-ti-ons}
\hyphenation{pseu-do-dif-fe-ren-tial}
\hyphenation{ortho-normal}



\def\o{\eta}
\def\a{\alpha}
\def\b{\beta}

\def\Z{\mathbb{Z}^{d}}

\def\N{\mathbb{N}}
\def\R{\mathbb{R}}
\def\Ren{\mathbb{R}^d}

\def\sch{\mathcal{S}}

\def\Fur{\mathcal{F}}

\def\f{\varphi}

\def\Sn2{S_{2}(L^{2}(\Ren))}
\def\S1{S_{1}(L^{2}(\Ren))}
\def\sig00{\sigma_{0,0}}

\def\la{\langle}
\def\ra{\rangle}




\begin{document}

\begin{abstract}
We study the action of
Fourier Integral Operators
(FIOs) of H{\"o}r\-man\-der's
type on $\fpc$, $1\leq
p\leq\infty$. We see, from
the Beurling-Helson theorem,
 that generally FIOs of order zero fail
to be bounded on these
spaces when $p\not=2$, the counterexample
being given by any smooth
non-linear change of
variable. Here we show that
FIOs of order $m=-d|1/2-1/p|$
are instead bounded.
Moreover, this loss of
derivatives is
 proved to be sharp
in every dimension $d\geq1$, even for phases which are linear in
the dual variables. The proofs make use of tools from
time-frequency analysis such as the theory of modulation spaces.
\end{abstract}

\title{Boundedness of Fourier Integral
Operators on $\Fur L^p$ spaces}\author{Elena Cordero, Fabio Nicola and Luigi Rodino}
\address{Department of Mathematics,
University of Torino, via
Carlo Alberto 10, 10123
Torino, Italy}
\address{Dipartimento di Matematica,
Politecnico di Torino, corso
Duca degli Abruzzi 24, 10129
Torino, Italy}
\address{Department of Mathematics,
University of Torino, via
Carlo Alberto 10, 10123
Torino, Italy}
\email{elena.cordero@unito.it}
\email{fabio.nicola@polito.it}
\email{luigi.rodino@unito.it}
\thanks{}
\subjclass[2000]{35S30,
47G30, 42C15}
\keywords{Fourier Integral
operators, $\Fur L^p$ spaces,
Beurling-Helson's Theorem,
modulation spaces, short-time
Fourier
  transform}
\maketitle

\section{Introduction}
Consider the spaces $\fpc$ of compactly
supported distributions whose Fourier
transform is in $L^p(\R^d)$, with the
norm $\|f\|_{\Fur
L^p}=\|\hat{f}\|_{L^p}$. Let $\phi$ be
a smooth function. Then it follows from
the Beurling-Helson theorem
(\cite{beurling53}, see also
\cite{lebedev94}) that the map
$Tf(x)=a(x)f(\phi(x))$, $a\in
C^\infty_0(\R^d)$, generally fails to
be bounded on $\fpc$, if $\phi$ is
non-linear. This is of course a Fourier
integral operator of special type,
namely $Tf(x)=\int e^{2\pi
i\phi(x)\eta}
a(x)\hat{f}(\eta)\,d\eta$, by the
Fourier inversion formula.\par In this
paper we study the action on $\fpc$ of
 Fourier Integral
Operators (FIOs) of the form
\begin{equation}\label{FIO}
Tf(x)=\int e^{2\pi
i\Phi(x,\eta)}
\sigma(x,\eta)\hat{f}(\eta)\,d\eta.
\end{equation}
The symbol $\sigma$ is in $S^m_{1,0}$,
the H{\"o}rmander class of order $m$.
Namely, $\sigma\in
\mathcal{C}^\infty(\R^{2d})$ and
satisfies
\begin{equation}\label{symb}
|\partial^\alpha_x\partial^\beta_\eta
\sigma(x,\eta)|\leq
C_{\alpha,\beta}
\langle\o\rangle^{m-|\beta|},
\quad \forall
(x,\o)\in\R^{2d}.
\end{equation}
We suppose that
$\sigma$ has compact support
with respect to $x$.

The phase $\Phi(x,\eta)$ is
real-valued,
 positively
 homogeneous of degree 1 in $\eta$, and smooth on
 $\R^d\times(\R^d\setminus\{0\})$.  It is actually
 sufficient to assume $\Phi(x,\eta)$ defined on
 an open
subset
 $\Lambda\subset \R^d\times(\R^d\setminus\{0\})$,
 conic in dual variables,
containing the points
$(x,\eta)\in{\rm
supp}\,\sigma$, $\eta\not=0$.
More precisely, after setting
\[
\Lambda'=\{(x,\eta)\in\R^d\times(\R^d\setminus\{0\}):
(x,\lambda\eta)\in{\rm
supp}\,\sigma\ \textrm{for
some}\ \lambda>0\},
\]
 we
require that $\Lambda$
contains the closure of
$\Lambda'$ in
$\R^d\times(\R^d\setminus\{0\})$.
We also assume the
non-degeneracy condition
\begin{equation}\label{nondeg}
{\rm det}\,
\left(\frac{\partial^2\Phi}{\partial
x_i\partial \eta_l}\Big|_{
(x,\eta)}\right)\not=0\quad \forall
(x,\eta)\in \Lambda.
\end{equation}
It is easy to see that such
an operator maps the space
$\mathcal{S}(\R^d)$ of
Schwartz functions into the
space
$\mathcal{C}^\infty_0(\R^d)$
of test functions
continuously. We refer to the
books \cite{hormander,treves}
for the general theory of
FIOs, and especially to
\cite{sogge93,stein93} for
results in $L^p$.
\par As the above example
shows, general boundedness
results in $\fpc$ are
expected only for FIOs of
negative order. Our main
result deals precisely with
the minimal loss of
derivatives for boundedness
to hold.
\begin{theorem}\label{maintheorem2}
Assume the above hypotheses
on the symbol $\sigma$ and
the phase $\Phi$. If
\begin{equation}\label{soglia}
m\leq-d\left|\frac{1}{2}-\frac{1}{p}\right|,
\end{equation}
then the corresponding FIO T,
initially defined on
$\mathcal{C}^\infty_0(\R^d)$,
extends to a bounded operator
on $\fpc$, whenever $1\leq
p<\infty$. For $p=\infty$,
$T$ extends to a bounded
operator on the closure of
$\mathcal{C}^\infty_0(\R^d)$
in $\Fur L^\infty(\R^d)_{\rm
comp}$.
\end{theorem}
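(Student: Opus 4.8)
\medskip
\noindent\textbf{Sketch of the argument.}

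\emph{Reduction to a kernel on the Fourier side.} By the Fourier inversion formula,
\[
\widehat{Tf}(\xi)=\int_{\rd}K(\xi,\eta)\,\hat f(\eta)\,d\eta,\qquad K(\xi,\eta)=\int_{\rd}e^{2\pi i(\Phi(x,\eta)-x\cdot\xi)}\sigma(x,\eta)\,dx,
\]
and, since $\sigma$ is compactly supported in $x$, $Tf$ is supported in a fixed compact set $K_0$. Hence $T$ is bounded on $\fpc$ \emph{iff} the integral operator with kernel $K$ is bounded on $L^p(\rd)$, and as $C_0^\infty(\rd)$ is dense in $\fpc$ for $p<\infty$ one may test on smooth functions. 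The plan is to prove two endpoint bounds — boundedness on $L^2=\Fur L^2$ when $m=0$, and boundedness on $\Fur L^1$ when $m=-d/2$ — and then reach $1<p<2$ by analytic interpolation and $2<p\le\infty$ by duality; the exponent $-d|1/2-1/p|$ in \eqref{soglia} is precisely the interpolant of these two endpoints. (Equivalently, one runs the same scheme inside the Wiener amalgam spaces $W(\Fur L^p,L^q)$, to which $\fpc$ is locally equal for every $q$; this is where time--frequency / modulation--space technology enters.)

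\emph{The endpoint $p=1$ (the heart).} For $p=1$ it is enough to use the crude Schur bound, i.e.\ to show that for $m\le-d/2$
\[
\sup_{\eta\in\rd}\int_{\rd}|K(\xi,\eta)|\,d\xi=\sup_{\eta\in\rd}\bigl\|e^{2\pi i\Phi(\cdot,\eta)}\sigma(\cdot,\eta)\bigr\|_{\Fur L^1(\rd)}<\infty,
\]
the equality holding because $\xi\mapsto K(\xi,\eta)$ is the Fourier transform in $x$ of $x\mapsto e^{2\pi i\Phi(x,\eta)}\sigma(x,\eta)$. For $|\eta|\le1$ this is clear. For $|\eta|=R\ge1$, homogeneity of degree $1$ and smoothness of $\Phi$ on the relevant compact part of the cone give $|\partial_x^\alpha\Phi(x,\eta)|\le C_\alpha R$ on $\supp\sigma(\cdot,\eta)$, while \eqref{symb} gives $\|\partial_x^\alpha\sigma(\cdot,\eta)\|_{L^\infty}\lesssim R^{-d/2}$, hence $\|\sigma(\cdot,\eta)\|_{L^2_x}\lesssim R^{-d/2}$ (here the compact $x$-support is used). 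I would then split the $\xi$--integral at $|\xi|\sim R$. On $|\xi|\lesssim R$, Cauchy--Schwarz and Plancherel give
\[
\int_{|\xi|\lesssim R}|K(\xi,\eta)|\,d\xi\lesssim R^{d/2}\,\|K(\cdot,\eta)\|_{L^2_\xi}=R^{d/2}\,\|\sigma(\cdot,\eta)\|_{L^2_x}\lesssim 1.
\]
On $|\xi|\gg R$ the phase $\Phi(x,\eta)-x\cdot\xi$ has $x$-gradient of size $\sim|\xi|$ on $\supp\sigma(\cdot,\eta)$, while all its $x$-derivatives of order $\ge2$ are $\lesssim R\le|\xi|$, so $N$ integrations by parts give $|K(\xi,\eta)|\le C_N R^{-d/2}|\xi|^{-N}$ and $\int_{|\xi|\gg R}|K(\xi,\eta)|\,d\xi\lesssim R^{d/2-N}\lesssim1$ for $N\ge d/2$. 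Adding the two pieces proves boundedness on $\Fur L^1$ for $m\le-d/2$, with no loss at the endpoint.

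\emph{Interpolation and duality.} The case $p=2$, $m=0$, is the classical $L^2$--continuity of a compactly supported, non-degenerate Fourier integral operator of order zero, which I would quote from Hörmander's theory (\cite{hormander}); the non-degeneracy \eqref{nondeg} is exactly what is used there. For $1<p<2$, assuming harmlessly that $m$ equals the threshold $-d|1/2-1/p|$, I would apply Stein's interpolation theorem to the analytic family $T_z$, $0\le\mathrm{Re}\,z\le1$, of Fourier integral operators with the same phase $\Phi$ and symbol $e^{z^2-\theta^2}\langle\eta\rangle^{-\frac d2(1-z)}\langle\eta\rangle^{-m}\sigma(x,\eta)$, where $\theta$ is fixed by $1/p=(1-\theta)+\theta/2$ so that $T_\theta=T$: on $\mathrm{Re}\,z=1$ these are order-$0$ operators with uniformly controlled symbol seminorms, hence uniformly $L^2$--bounded, and on $\mathrm{Re}\,z=0$ they are order-$(-d/2)$ operators, uniformly $\Fur L^1$--bounded by the previous step; interpolation then gives $T$ bounded on $\Fur L^p$ under \eqref{soglia}. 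For $2<p\le\infty$ I would dualize: the transpose ${}^tT$ is again a Fourier integral operator of the same class and order, up to a smoothing operator — the adjoint of an FIO being an FIO, the naturally arising type-II operator reduced to type I by \eqref{nondeg} via the equivalence of phase functions, and smoothing operators with compactly supported smooth kernels being harmless on every $\Fur L^q$. Hence ${}^tT$ is bounded on $\Fur L^{p'}$ (with $p'\in[1,2)$, already treated), so $T=({}^tT)^t$ is bounded on $(\Fur L^{p'})'=\Fur L^p$ for $2<p<\infty$ and on $(\Fur L^1)'=\Fur L^\infty$ for $p=\infty$; since $C_0^\infty$ is merely weak-$*$ dense in $\Fur L^\infty_{\mathrm{comp}}$ (indeed $T\delta\notin\Fur L^\infty$ in general), the $p=\infty$ conclusion holds, as stated, on the closure of $C_0^\infty$.

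\emph{Main obstacle.} The delicate point is the $p=1$ estimate, and specifically that one must \emph{not} evaluate $K(\xi,\eta)$ by stationary phase in $x$: since only the mixed Hessian $\partial_x\partial_\eta\Phi$ is assumed non-degenerate, $\partial_x^2\Phi$ may be degenerate and stationary phase would lose powers of $R$. The correct, soft mechanism is that $\widehat{e^{2\pi i\Phi(\cdot,\eta)}\sigma(\cdot,\eta)}$ has $L^2$-norm $\lesssim R^{-d/2}$ and lives essentially in the ball $\{|\xi|\lesssim R\}$, so Cauchy--Schwarz forces its $L^1$-norm to be $O(1)$ — and it is precisely the compact $x$-support that makes the $L^2$-norm both finite and small (equivalently, $R^{d/2}$ is, up to constants, the $\Fur L^1$-size of the undamped chirp $e^{2\pi i\Phi(\cdot,\eta)}$ restricted to $K_0$). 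For the same reason the direct kernel bound is hopeless for $p>2$, where $\sup_\xi\int|K(\xi,\eta)|\,d\eta=+\infty$ because $\sigma$ is not compactly supported in $\eta$; this is what forces the passage through duality, and is also the source of the $\Fur L^\infty$ closure restriction.
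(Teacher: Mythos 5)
Your proposal is correct, but it follows a genuinely different route from the paper. The paper derives Theorem \ref{maintheorem2} as a corollary of the stronger modulation-space statement (Theorem \ref{maintheorem}): it splits $\sigma$ into low- and high-frequency parts, performs a dyadic decomposition $T_2=\sum_j T^{(j)}$, conjugates each piece by the dilation $U_{2^{j/2}}$ to land in the setting of Propositions \ref{pro1}--\ref{pro2} (phases with bounded derivatives of order $\geq 2$), compensates the Sugimoto--Tomita dilation loss with the factor $2^{-jd/2}$ from the symbol, and sums via almost orthogonality (Lemma \ref{le41} for $M^1$; Theorem \ref{composition} plus Lemma \ref{lemma2} for $\mathcal{M}^\infty$). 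Your argument instead works directly on the Fourier-side kernel
\[
K(\xi,\eta)=\mathcal{F}_{x\to\xi}\bigl[e^{2\pi i\Phi(\cdot,\eta)}\sigma(\cdot,\eta)\bigr](\xi),
\]
and your $p=1$ endpoint is quite clean: the bound $\sup_\eta\int|K(\xi,\eta)|\,d\xi\lesssim 1$, obtained by Cauchy--Schwarz and Plancherel on $|\xi|\lesssim|\eta|$ and non-stationary phase on $|\xi|\gg|\eta|$, encodes in a single Lebesgue-space computation the very same phase-space counting the paper carries out through the dilation conjugation (the factor $R^{d/2}$ you pay from the ball $|\xi|\lesssim R$ is the paper's $2^{jd/2}$ from $\|U_{2^{-j/2}}\|_{M^1\to M^1}$, cancelled against the symbol decay $R^{-d/2}$). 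Your observation that stationary phase in $x$ is forbidden because only the mixed Hessian is controlled is also exactly the right obstruction.

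The trade-offs are worth naming. Your route needs two external inputs that the paper avoids: the $L^2$ theory of compactly supported non-degenerate FIOs of order zero, and --- for $2<p\leq\infty$ --- the fact that $T^{*}$ is again an operator of the same class (type II reduced to type I via the equivalence of phase functions, with the reduced phase still homogeneous and non-degenerate and the reduced symbol still compactly supported in $x$; true by \eqref{nondeg}, but not free, and it also requires the same microlocalization/partition-of-unity step used at the start of Section \ref{section5} to patch the locally-defined reduced phases). The paper sidesteps duality entirely by proving $\mathcal{M}^\infty$-boundedness directly. In exchange, your route dispenses with modulation spaces, the Littlewood--Paley decomposition, the dilation lemma, and the almost-orthogonality machinery --- but only yields the $\Fur L^p_{\mathrm{comp}}$ statement (Theorem \ref{maintheorem2}), not the stronger $M^p$ statement of Theorem \ref{maintheorem}, which is what the paper's time-frequency apparatus is really built to produce.

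One small point to make explicit: in the interpolation step you should reduce first (as the paper does) to the equality case $m=-d|1/2-1/p|$, and you do need to verify that the family $T_z$ has admissible growth on the boundary lines; the factor $e^{z^2-\theta^2}$ handles the $\langle\eta\rangle^{-i\,d\,\mathrm{Im}(z)/2}$ multiplier's polynomial seminorm growth, so this is fine, but it deserves a sentence.
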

The loss of derivatives in
\eqref{soglia} is proved to
be sharp in any dimension
$d\geq1$,  even for phases
$\Phi(x,\eta)$ which are
linear in $\eta$ (see Section
\ref{sharp}). In contrast,
notice that  FIOs are
continuous on $L^p(\rd)_{\rm
comp}$ if
\begin{equation*}
m\leq-(d-1)\left|\frac{1}{2}-\frac{1}{p}\right|,
\end{equation*}
and this threshold is sharp. In
particular, for $d=1$, the continuity
is attained without loss of
derivatives, i.e., $m=0$ (\cite[Theorem
2, page 402]{stein93}, see also
\cite{tao}).\par As a model, consider  the following simple example. In dimension $d=1$, take the phase
$\Phi(x,\eta)=\varphi(x)\eta$, where
$\varphi:\R\to\R$ is a diffeomorphism,
 with $\f(x)=x$ for $|x|\geq 1$ and whose
 restriction to $(-1,1)$ is non-linear.
 Consider then the FIO
 \begin{equation}\label{FIOes}
Tf(x)=\int_{\R} e^{2\pi
i\varphi(x)\eta}
G(x)\la \eta\ra^{{m}}\hat{f}(\eta)\,d\eta,
\end{equation}
with $G\in\cC_0^\infty(\R)$, $G(x)=1$ for $|x|\leq 1$.
Let moreover $1\leq p\leq2$. Then
Theorem \ref{maintheorem2} and the
discussion in Section \ref{sharp} below
show that $T:\Fur L^p(\R)_{\rm comp}\to
\Fur L^p(\R)_{\rm comp}$ is bounded if
and only if $m$ satisfies
\eqref{soglia}, with $d=1$.\par The techniques
employed to prove Theorem
\ref{maintheorem2} differ from those
used in \cite[Theorem 2, page
402]{stein93} and \cite{tao} for the
local $L^p$-boundedness. Indeed, in
\cite{stein93} the main idea was to
split the frequency-localized operator,
of order $-(d-1)/2$, into a sum of 
$O(2^{j(d-1)/2})$ FIOs whose phases are
essentially linear in $\eta$ (there
$\langle\eta\rangle\asymp 2^j$), hence
satisfying the desired estimates
without loss of derivatives. Similarly,
in \cite{tao}, the main point was a
decomposition of $T$ into a part with
phase which is non-degenerate with
respect to $\eta$ (and further
factorized) and a degenerate part, with
a phase closer to the linear case,
fulfilling better estimates. However,
as the  example \eqref{FIOes} shows, the case of
linear phases in $\eta$ already
contains all the obstructions to the
local $\Fur L^p$-boundedness, so that we cannot here take
advantage of this kind of
decompositions. Instead, the proof of
Theorem \ref{maintheorem2} makes use of
the theory of modulation spaces $M^p$,
$1\leq p\leq \infty$, which are now
classical function spaces used in
time-frequency analysis (see
\cite{fe89-1,feichtinger90,grochenig}
and Section 2  for definition and
properties).\par In short, we say that
a temperate distribution $f$ belongs to
$M^p(\R^d)$ if its short-time Fourier
transform $V_g f\phas$, defined in
\eqref{STFT} below, is in $L^p(\rdd)$.
Then, $\|f\|_{M^p}:=\|V_g f\|_{L^p}$.
Here $g$ is a non-zero (so-called
window) function in $\cS(\rd)$, and
changing $g\in \cS(\rd)$ produces
equivalent norms. The space
$\mathcal{M}^{\infty}(\rd)$ is
 the closure of $\cS(\rd)$ in the
 $M^\infty$-norm. For heuristic purposes,
distributions in $M^p$ may be
regarded as functions which
are locally in ${\Fur L^p}$
and decay at infinity like
a function in $L^p$.
\par Modulation spaces are
relevant here since, for distributions
supported in a fixed compact subset
$K\subset\R^d$, there exists $C_K>0$
such that
\[
C_K^{-1}\|u\|_{M^p}\leq\|u\|_{\Fur
L^p}\leq C_K\|u\|_{M^p},
\]
(see
\cite{fe89-1,feichtinger90,grobner} for
even more general embeddings, and the
subsequent Lemma \ref{lloc}). Then, our
framework can be shifted from $\cF L^p$
to $M^p$ spaces and, using techniques
from time-frequency analysis,  Theorem
\ref{maintheorem2} shall be proven in a
slightly wider  generality:
\begin{theorem}\label{maintheorem}
Assume the above hypotheses
on the symbol $\sigma$ and
the phase $\Phi$. Moreover,
assume \eqref{soglia}. Then,
if $1\leq p<\infty$, the
corresponding FIO $T$,
initially defined on
$\mathcal{S}(\R^d)$, extends
to a bounded operator on
$M^p$; if $p=\infty$, the
operator $T$  extends to a
bounded operator on
$\mathcal{M}^\infty$.
\end{theorem}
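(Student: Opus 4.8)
The plan is to transfer the problem to a discrete statement about the Gabor matrix of $T$, prove an endpoint estimate at $p=1$ (and $p=\infty$) by a Schur test, and then interpolate against the classical $L^2$ bound. First I would fix a Gaussian window $g$ and a Gabor frame $\{\pi(\lambda)g\}_{\lambda\in\Lambda}$, $\Lambda=\alpha\mathbb{Z}^d\times\beta\mathbb{Z}^d$, and invoke the standard machinery of modulation spaces (see \cite{grochenig}): for $1\le p<\infty$ the norm $\|u\|_{M^p}$ is equivalent to $\|(\langle u,\pi(\lambda)g\rangle)_\lambda\|_{\ell^p(\Lambda)}$, and, composing with the dual synthesis/analysis operators, $T$ extends boundedly to $M^p$ as soon as its Gabor matrix $M(w,z)=\langle T\pi(z)g,\pi(w)g\rangle$, $w,z\in\Lambda$, is bounded on $\ell^p(\Lambda)$; for $p=\infty$ the same gives boundedness on $\mathcal{M}^\infty$, since $T$ maps $\mathcal{S}(\R^d)$ into itself and $\mathcal S$ is dense there. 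Everything is thus reduced to estimating $M(w,z)$, and one may also assume $\sigma$ has order $0$: if $m\le-d|1/2-1/p|$ we factor $\langle\eta\rangle^m=\langle\eta\rangle^{-d|1/2-1/p|}\langle\eta\rangle^{m+d|1/2-1/p|}$, the last factor being a Fourier multiplier of order $\le0$, hence bounded on every $M^p$, so it remains to treat an FIO whose symbol has order exactly $-d|1/2-1/p|$.

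The heart of the matter — and the step I expect to be the main obstacle — is the kernel estimate: writing $z=(y,\eta)$, $w=(x,\xi)$ and letting $\chi$ be the canonical diffeomorphism generated by $\Phi$ (well defined by the non-degeneracy condition \eqref{nondeg}), I would prove
\[
|M(w,z)|\ \lesssim_N\ \langle\eta\rangle^{m-d/2}\,H_N(w,z),
\]
where $H_N(w,z)$ is comparable to $1$ on the ``bent frequency strip'' consisting of the $w$ whose first component is $O(1)$-close to the position part of $\chi(z)$ and whose second component lies within $O(1)$ of $\nabla_x\Phi(x,\eta)$ for some such $x$ — a set whose frequency extent is $\asymp|\eta|$, since $\nabla_x\Phi$ is homogeneous of degree $1$ in $\eta$ — and decays like $\langle\cdot\rangle^{-N}$ away from it. This is obtained by applying $T$ to the Gaussian wave packet $\pi(z)g$, performing the dyadic frequency decomposition $\sigma=\sum_j\sigma_j$ (so that $\langle\eta\rangle\asymp2^j$ on $\mathrm{supp}\,\sigma_j$ and $T_j$ sends frequencies $\asymp2^j$ to frequencies $\asymp2^j$), and analysing the resulting oscillatory integral with phase $\Phi(x,\eta)-x\cdot\xi$ by stationary/non-stationary phase; the factor $\langle\eta\rangle^{-d/2}$ is precisely the gain from stationary phase in the $d$ variables $x$, governed by \eqref{nondeg}, and the compact $x$-support of $\sigma$ is what keeps the $y$-extent of the strip bounded. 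The delicate bookkeeping to extract is that, because $\chi$ is a diffeomorphism, the lattice points $w\in\Lambda$ with $H_N(w,z)\not\equiv0$ number $\lesssim\langle\eta\rangle^d$, and symmetrically the $z\in\Lambda$ for fixed $w$ number $\lesssim\langle\xi\rangle^d$.

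Granting this, Schur's test gives at once $\sup_z\sum_w|M(w,z)|\lesssim\sup_z\langle\eta\rangle^{m+d/2}$ and $\sup_w\sum_z|M(w,z)|\lesssim\sup_w\langle\xi\rangle^{m+d/2}$, both finite as soon as $m\le-d/2$; hence for such $m$ the Gabor matrix is bounded on $\ell^1(\Lambda)$ and on $\ell^\infty(\Lambda)$, so $T$ is bounded on $M^1$ and on $\mathcal{M}^\infty$. At the other end $M^2=L^2$ (isometrically up to $\|g\|_{L^2}$, by Moyal's identity), and an FIO of the present type with symbol of order $0$ is bounded on $L^2$ by the classical theory (\cite{hormander}), for which \eqref{nondeg} and the compact $x$-support are exactly the hypotheses needed.

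Finally I would interpolate. Introduce the analytic family $T_\zeta f(x)=\int e^{2\pi i\Phi(x,\eta)}\,\sigma(x,\eta)\,\langle\eta\rangle^{-d\zeta/2}\,\hat f(\eta)\,d\eta$ on the strip $0\le\mathrm{Re}\,\zeta\le1$, with $\sigma$ of order $0$. On $\mathrm{Re}\,\zeta=0$ the symbol remains of order $0$, with seminorms of polynomial growth in $\mathrm{Im}\,\zeta$, so $T_\zeta$ is uniformly bounded on $M^2=L^2$; on $\mathrm{Re}\,\zeta=1$ the symbol has order $-d/2$, so by the previous paragraph $T_\zeta$ is uniformly bounded on $M^1$ (and on $\mathcal{M}^\infty$). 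By Stein's interpolation theorem together with the complex interpolation of modulation spaces, $[L^2,M^1]_\theta=M^p$ with $1/p=1/2+\theta/2$ for $1\le p\le2$ (and $[L^2,M^\infty]_\theta=M^p$ for $2\le p<\infty$), the operator $T_\theta$ — whose symbol has order $-d\theta/2=-d|1/2-1/p|$ — is bounded on $M^p$; combined with the splitting-off of the extra order-$\le0$ multiplier this yields Theorem~\ref{maintheorem}, the case $p=\infty$ following directly from the $\ell^\infty(\Lambda)$ bound and the density of $\mathcal S$ in $\mathcal{M}^\infty$. Once the Gabor-matrix estimate of the second paragraph is in place — with the sharp amplitude $\langle\eta\rangle^{-d/2}$ matched against the sharp size $\langle\eta\rangle^d$ of its support — the reduction to $\ell^p$, Schur's test and the interpolation are routine.
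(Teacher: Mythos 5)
Your overall framework — reduce to a kernel estimate on the Gabor/STFT matrix of $T$, run a Schur test at the endpoints $p=1,\infty$, then interpolate against $M^2=L^2$, after splitting off a Fourier multiplier of order $\le 0$ — is indeed the right one, and the interpolation step and the reduction to order $-d|1/2-1/p|$ are essentially the paper's. The gap is in the step you yourself flag as the crux. The bound
$$|M(w,z)|\ \lesssim_N\ \langle\eta\rangle^{m-d/2}\,H_N(w,z),$$
with $H_N\asymp 1$ on a bent strip of $\asymp\langle\eta\rangle^d$ lattice points, is not correct as a pointwise statement, and the stationary-phase argument you sketch cannot produce it. The factor $\langle\eta\rangle^{-d/2}$ is not a uniform stationary-phase gain in $x$: the non-degeneracy \eqref{nondeg} constrains the \emph{mixed} Hessian $\partial_x\partial_\eta\Phi$, not $\partial_x^2\Phi$, which is allowed to vanish identically. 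For the linear phase $\Phi(x,\eta)=x\cdot\eta$ (a pseudodifferential operator, admissible here), the Gabor matrix concentrates on $w\approx z$ with $|M(z,z)|\asymp\langle\eta\rangle^m$, not $\langle\eta\rangle^{m-d/2}$, and the strip collapses to $O(1)$ points; in the strongly curved case the amplitude drops and the strip spreads. The two effects trade off so that the Schur sums give the same $\langle\eta\rangle^{m+d/2}$ in both regimes — which is why the \emph{conclusion} is correct — but the pointwise estimate is false, and in fact no fixed-scale stationary/non-stationary phase argument applies, since the higher $x$-derivatives $\partial_x^\alpha\Phi$ are of size $|\eta|$, not bounded, so the classical lemmas with large parameter are inapplicable against a unit-scale Gaussian window. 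The device that fixes this, and which your sketch is missing, is the anisotropic rescaling: $T^{(j)}=U_{2^{j/2}}\tilde T^{(j)}U_{2^{-j/2}}$, where $\Phi_j(x,\eta)=2^{j/2}\Phi(2^{-j/2}x,\eta)$ has \emph{all} derivatives of order $\ge 2$ uniformly bounded. Then the analogue of your kernel bound (Proposition \ref{pro1}) holds for $\tilde T^{(j)}$ with a strip of bounded size and \emph{no} $\langle\eta\rangle^{-d/2}$ factor, and the full loss $2^{-jd/2}$ enters in bulk from the symbol magnitude together with the sharp dilation bound of Theorem \ref{dilprop}, not from a pointwise gain. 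This replacement of a (false) stationary-phase gain by the dilation property is the technical heart of the proof.

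Two further omissions. The phase is homogeneous of degree one, hence singular at $\eta=0$; the dyadic/stationary-phase machinery cannot be started at low frequencies, and a separate argument is required for the part of the symbol supported near the origin (Section \ref{section4}, resting on the fact that $e^{2\pi i\Phi(x,\cdot)}\chi\in\cF L^1$ uniformly in $x$, Theorem \ref{bnj}); your proposal does not address this. And even with the correct per-shell estimates, summing over $j$ is not a single global Schur test: one needs the frequency near-localization of $T^{(j)}$, obtained from the composition calculus for $\psi_k(D)T^{(j)}$ (Theorem \ref{composition}), together with the recombination Lemmas \ref{le41} and \ref{lemma2}.
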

 Since the
proof of Theorem \ref{maintheorem} is quite technical, for the
benefit of the reader we first
 exhibit the general pattern. We start by splitting up the symbol
$\sigma(x,\eta)$ of \eqref{FIO} into the  sum of two symbols
supported where $|\eta|\leq
4$ and $|\eta|\geq2$, respectively. These
symbols give rise to two
FIOs $T_{1}$ and $T_{2}$ which will be
studied separately.\par
 The operator $T_{1}$ carries the singularity of the phase $\Phi$ at the origin and is  proved to be
bounded on $M^1$ and on
$\mathcal{M}^\infty$. The
boundedness on $M^p$, for
$1<p<\infty$, follows from
complex interpolation.
Precisely, the boundedness on
 $M^1$ is straightforward.
For the $\mathcal{M}^\infty$-case, we use the fact
 that $e^{2\pi
 i\Phi(x,\eta)}\chi(\eta)$,
$\chi$ being a cut-off
function which localizes
 near the origin,
 is in $\Fur L^1_\eta$,
 uniformly with respect to
 $x$ (cf. Theorem \ref{bnj}).\par
For the operator $T_{2}$, which carries the oscillations at infinity,
 we still prove the
desired boundedness result on $M^1$ and on $\mathcal{M}^\infty$ in
the case $m=-d/2$. Here the general case follows by  interpolation
with the well-known case $M^2=L^2$, $m=0$, see e.g. \cite[page
397]{stein93}.\par To chase our goal, we perform a dyadic
decomposition of the symbol (of order $m=-d/2$) in shells where
$|\eta|\asymp 2^j$, $j\geq0$, obtaining the representation
$T_{2}=\sum_{j=1}^\infty T^{(j)}$. Each dyadically localized
operator $T^{(j)}$ is then  conjugated with the dilations
operators $U_{2^{j/2}}f(x)=f(2^{j/2}x)$, so that
\begin{equation}\label{i1000}
T^{(j)}=U_{2^{j/2}}\tilde{T}^{(j)}
U_{2^{-j/2}},
\end{equation}
where $\tilde{T}^{(j)}$ is
a FIO with phase
\begin{equation}\label{i100}
{\Phi}_j(x,\eta)=\Phi(2^{-j/2}x,2^{j/2}\eta)=2^{j/2}\Phi(2^{-j/2}x,\eta).
\end{equation}
Now, ${\Phi}_j(x,\eta) $ has
derivatives of order $\geq2$
bounded on the support of the
corresponding symbol, and, as a consequence,
the operators
$\tilde{T}^{(j)}$ are bounded
on $M^1$ and
$\mathcal{M}^\infty$, with
operator norm $\lesssim
2^{-jd/2}$.
These results are established and proved   in
Propositions \ref{pro1} and
\ref{pro2}; they can be seen as a microlocal
version of  \cite{toft07,fio1},
 where $M^p$-boundedness (without loss of derivatives)
  was proved for FIOs with phase function possessing
   globally bounded derivatives
   of order $\geq 2$ (extending results for $p=2$ in
   \cite{boulkhemair}).
 Combining this
with  boundedness results
 for dilation operators on
modulation spaces
\cite{sugimototomita}, we
obtain the boundedness of
$T^{(j)}$ uniformly with
respect to $j$. In this way,
the assumption \eqref{soglia}
is used to compensate the
bound for the norm of the
dilation operator.\par A last
non-trivial technical problem
is  summing (on $j\geq1$) the
corresponding estimates. For
this, we make use of the
almost orthogonality of the
$T^{(j)}$. As a tool, for
$M^1$ we will need an equivalent
characterization of the
$M^1$-norm, see \eqref{normmp} below.
On the other hand,  for the
$\mathcal{M}^\infty$-case, we
 essentially use the following property:

\textit{If $\hat{u}$ is
localized in the shell
$|\eta|\asymp 2^j$, then
$\widehat{Tu}$ is localized
in the neighbour shells}.

\noindent In practice, this
is achieved by means of
another dyadic partition in
the frequency domain and the
composition formula for a
pseudodifferential operator
and a FIO (see Theorem
\ref{composition}).\par
Finally, we observe that the
above trick of conjugating
with dilations has a nice
interpretation in terms of
the geometry of the symbol
classes, namely, the
associated partition of the
phase space by suitable
boxes. The  symbol estimates
for the H{\"o}rmander's
classes correspond to a
partition of the phase space
in boxes of size $1\times
2^j$, $j\geq0$. Instead,  the
estimates satisfied by the
phases ${\Phi}_j(x,\eta)$ in
\eqref{i100}, are of Shubin's
type \cite{Shubin91}, and
correspond to a partition of
the phase space by boxes of
size $2^{j/2}\times 2^{j/2}$.
This enters the general philosophy of \cite{fefferman}.  Figure 1
shows the passage from the
H{\"o}rmander's geometry to
the Shubin's one, performed
in \eqref{i1000}.

\vspace{1.2cm}
 \begin{center}
           \includegraphics{figFIO.1}
            \\
           $ $
\end{center}

 \begin{center}{ Figure 1. }
           \end{center}
 As a motivation of our study we recall that
 the solutions of the Cauchy problem for a strictly
  hyperbolic equation can be described by Fourier
   integral operators. If the equation has constant
   coefficients, then $T$ reduces to a Fourier
    multiplier and acts continuously on $M^p$
    without loss of derivatives,
     cf. \cite{benyi,toft07,fio1}. In the case of
     variable coefficients, our Theorem \ref{maintheorem}
      gives the optimal regularity results on $M^p$.
      One could be disappointed then,  since the loss
       $d|1/2-1/p|$ is even larger than  $(d-1)|1/2-1/p|$
        in $L^p$, cf. \cite{sogge93,stein93}.
        However, when passing to treat
FIOs  whose symbols are not
compactly supported in $x$, cf. \cite{ruzhsugimoto}, modulation spaces seem
 really
to be the right function
spaces to control both local
regularity and decay at
infinity. We plan to devote a
subsequent paper  to such
topics.\par\medskip The paper
is
organized as follows.
In Section \ref{section2}  the definitions and
basic properties of the spaces
$\Fur L^p$ and the modulation
spaces $M^p$ are reviewed.  Section
\ref{section3} contains preliminaries on  FIOs and the proof of the above mentioned
microlocal version of results
in \cite{fio1}. In Section \ref{section4}
we prove Theorem
\ref{maintheorem} for a
symbol $\sigma(x,\eta)$ which,
in addition, vanishes for
$\eta$ large.  Section
\ref{section5} is devoted to the proof of
Theorem \ref{maintheorem} for
a symbol $\sigma(x,\eta)$
 vanishing
for $\eta$ small. Finally,
Section \ref{sharp} exhibits
the optimality of both
Theorems \ref{maintheorem2}
and \ref{maintheorem}.

\vskip0.5truecm

\par
\textbf{Notation.} We define
$|x|^2=x\cdot x$, for
$x\in\Ren$, where $x\cdot
y=xy$ is the scalar product
on $\Ren$. The space of
smooth functions with compact
support is denoted by
$\cC_0^\infty(\rd)$, the
Schwartz class is
$\sch(\Ren)$, the space of
tempered distributions
$\sch'(\Ren)$.    The Fourier
transform is normalized to be
${\hat
  {f}}(\o)=\Fur f(\o)=\int
f(t)e^{-2\pi i t\o}dt$.
 Translation and modulation operators ({\it time and frequency shifts}) are defined, respectively, by
$$ T_xf(t)=f(t-x)\quad{\rm and}\quad M_{\o}f(t)= e^{2\pi i \o
 t}f(t).$$
We have the formulas
$(T_xf)\hat{} = M_{-x}{\hat
{f}}$, $(M_{\o}f)\hat{}
=T_{\o}{\hat {f}}$, and
$M_{\o}T_x=e^{2\pi i
x\o}T_xM_{\o}$. The inner
product of two functions
$f,g\in \lrd$ is $\la f, g
\ra=\intrd f(t)
\overline{g(t)}\,dt$, and its
extension to $\cS'\times\cS$
will be also denoted by  $\la
\cdot, \cdot \ra$.The
notation $A\lesssim B$ means
$A\leq c B$ for a suitable
constant $c>0$, whereas $A
\asymp B$ means $c\inv A \leq
B \leq c A$, for some $c\geq
1$. The symbol $B_1
\hookrightarrow B_2$ denotes
the continuous embedding of
the space $B_1$ into $B_2$.
\section {Function spaces and
preliminaries}\label{section2}
\subsection{$\Fur L^p$ spaces}
For every $1\leq
p\leq\infty$, we define by
 $\cF L^p$ the Banach space of all
 distributions $\hat{f}\in\cS'$ such that
$f\in L^p(\rd)$
endowed with the norm
$$
\|\hat{f}\|_{\cF
L^p}=\|f\|_{L^p};
$$
for details see, e.g.,
\cite{kat}.\par The space
$\Fur L^p(\R^d)_{\rm comp}$
consists of the
distributions in $\Fur
L^p(\R^d)$ having compact
support. It is the inductive
limit of the Banach spaces
$\Fur L^p(K_n):=\{f\in \Fur
L^p(\R^d):\ {\rm supp}\,
f\subset K_n\}$, where
$\{K_n\}$ is any increasing
sequence of compacts whose
union is $\R^d$.\par Since
the operator $T$ in the
Introduction has a symbol
compactly supported in $x$,
we see that the conclusion of
Theorem \ref{maintheorem2} is
equivalent to an estimate of
the type
\[
\|Tu\|_{\Fur L^p}\leq
C_K\|u\|_{\Fur L^p},\quad
\forall
u\in\mathcal{C}^\infty_0(K),
\]
for every compact
$K\subset\R^d$.
\subsection{Modulation
spaces}(\cite{fe89-1,feichtinger90,grochenig}).
Let $g\in\cS$ be a non-zero
window function. The
short-time Fourier transform
(STFT) $V_gf$ of a
function/tempered
distribution $f$ with respect
to the the window $g$ is
defined by
\begin{equation}\label{STFT}
V_g f(x,\o)=\la f,M_\o T_xg\ra= \int e^{-2\pi i \o y}f(y)\overline{g(y-x)}\,dy,
\end{equation}
i.e.,  the  Fourier transform
$\cF$ applied to
$f\overline{T_zg}$.\par There
is also an inversion formula
for the STFT (see e.g.
(\cite[Corollary
3.2.3]{grochenig}). Namely,
if $\|g\|_{L^2}=1$ and, for
example, $u\in{L^2}(\R^d)$,
it turns out
\begin{equation}\label{treduetre}
u=\int_{\R^{2d}} V_g
u(y,\o)M_\o T_y g\,
dy\,d\o.
\end{equation}

\par For
$1\leq p\leq\infty$,
$s\in\R$, the modulation
space $M^{p}_s(\R^n)$ is
defined as the space of all
distributions $f\in\cS'(\rd)$
such that the norm
\[
\|f\|_{M^{p}_s(\rd)}
=\left(\intrdd|V_g
f\phas|^p\langle\eta\rangle^{sp}
\,dxd\o\right)^{1/p}
\] is
finite (with the obvious
changes if $p=\infty$). If
$s=0$ we simply write $M^p$
in place of $M^p_0$. This
definition is independent of
the choice
 of the  window $g$ in the sense of equivalent norms.
  Moreover, if $1\leq p<\infty$, $M^1_s$ is densely
   embedded into $M^p_s$, as is the Schwartz class $\cS$.
     Among the properties of \modsp s, we record that
$M^{2}=L^2$, $M^1\subset L^1\cap\cF L^1$.
 For $p\not=2$, $M^p$ does not coincide
 with any Lebesgue space.
  Indeed, for $1\leq p<2$, $M^p\subset L^p$, whereas,
   for $2<p\leq\infty$, $L^p\subset M^p$. Differently
    from the $L^p$ spaces, they enjoy the embedding
     property:
 $M^{p}\hookrightarrow M^{q}$, if $p\leq
q$. If $p<\infty$, the dual
of $M^{p}_s$ is
$(M^{p}_s)'=M^{p'}_{-s}$.\par
Let us define by
$\mathcal{M}^p_s(\rd)$ the
completion of $\cS(\rd)$
under the norm
 $\|\cdot\|_{M^p_s}$. Then the following are true \cite{Feichtinger-grochenig89}:\\
(i) If $1\leq p<\infty$, then
$\mathcal{M}^p_s(\rd)= M^p_s(\rd)$,\\
(ii) If $1\leq p_1,p_2\leq
\infty$, $s_1,s_2\in\R$,
 and $0<\theta<1$, $1\leq p\leq\infty$, $s\in\R$
 satisfy
 $1/p=(1-\theta)/p_1+\theta/p_2$,
  $s=(1-\theta)s_1+\theta s_2$, then
\begin{equation}\label{interpmp}
(\mathcal{M}^{p_1}_{s_1},
\mathcal{M}^{p_2}_{s_2})_{[\theta]}=\mathcal{M}^p_s,
\end{equation}
(iii)
$(\mathcal{M}^\infty_s(\rd))'=M^1_{-s}$.

In the sequel it will be
useful the following
characterization of the
$M^p_s$ spaces (see, e.g.,
\cite{feichtinger90,triebel83}):
 let
$\f\in\cC^{\infty}_0(\rd)$, $\f\geq
0$,  such that $\sum_{m\in\zd}
\f(\eta -m)\equiv 1$, for all
$\o\in\rd$. Then
\begin{equation}\label{normmp}
\|u\|_{M^p_s}\asymp
\left(\sum_{m\in\zd} \|\f(D
-m)u\|_{L^p}^p\langle
m\rangle^{ps}\right)^{1/p},
\end{equation}
where $\f(D -m)u =\cF^{-1}[\f(\cdot
-m)\hat{u}]$ (with the obvious
changes if $p=\infty$).

If we consider the
space of functions/distributions $u$
 in $M^p(\rd)$ that are  supported in any fixed
  compact set,
  then their $M^p$-norm is equivalent
   to the $\cF L^p$-norm.  More precisely, we have the following result \cite{fe89-1,feichtinger90,grobner,kasso07}:
\begin{lemma}\label{lloc} Let $1\leq p\leq \infty.$
For every $u\in
\mathcal{S}'(\rd)$, supported
in a compact set $K\subset
\rd$, we have $u\in
M^p\Leftrightarrow u\in \Fur
L^p$, and
\begin{equation}\label{loc}
C_K^{-1} \|u\|_{M^p}\leq \|u\|_{\cF L^p}\leq C_K \|u\|_{M^p},
\end{equation}
where $C_K>0$ depends only on
 $K$.
\end{lemma}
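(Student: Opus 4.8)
The plan is to deduce both halves of \eqref{loc} directly from the definition \eqref{STFT} of the short-time Fourier transform, exploiting the fact that for $u$ supported in the fixed compact set $K$ the function $x\mapsto V_g u(x,\o)$ is itself compactly supported. I would fix once and for all a window $g\in\cC_0^\infty(\rd)$ with $\norm{g}_{L^2}=1$; this is harmless, since the $M^p$-norm is independent of the window up to equivalence. Because $M_\o T_x g$ is supported in $x+\supp g$, the pairing $V_g u(x,\o)=\la u,M_\o T_x g\ra$ vanishes unless $x\in K':=K-\supp g$, and $K'$ is compact. All constants produced below depend only on $K$ (through $|K'|$) and on the fixed $g$.

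For the inequality $\norm{u}_{M^p}\leq C_K\norm{u}_{\cF L^p}$, I would use that $V_g u(x,\cdot)=\cF\bigl(u\,\overline{T_x g}\bigr)=\hat u*\cF\bigl(\overline{T_x g}\bigr)$, where $\norm{\cF(\overline{T_x g})}_{L^1}=\norm{\hat g}_{L^1}$ is independent of $x$. Young's inequality then gives $\norm{V_g u(x,\cdot)}_{L^p}\leq\norm{\hat g}_{L^1}\norm{u}_{\cF L^p}$ for every $x$, and integrating the $p$-th power in $x$ over $K'$ yields $\norm{u}_{M^p}\leq|K'|^{1/p}\norm{\hat g}_{L^1}\norm{u}_{\cF L^p}$ (with the obvious change if $p=\infty$). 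In particular, if $u\in\cF L^p$ is supported in $K$, then $u\in M^p$.

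For the reverse inequality $\norm{u}_{\cF L^p}\leq C_K\norm{u}_{M^p}$, I would invoke the inversion formula \eqref{treduetre}, which for $\norm{g}_{L^2}=1$ holds in the weak sense for every $u\in\mathcal{S}'(\rd)$: $u=\int_{\rdd}V_g u(y,\o)\,M_\o T_y g\,dy\,d\o$. Applying $\cF$ and using $|\cF(M_\o T_y g)(\xi)|=|\hat g(\xi-\o)|$ together with the vanishing of $V_g u(y,\o)$ for $y\notin K'$, one gets
\[
|\hat u(\xi)|\leq\int_{K'}\int_{\rd}|V_g u(y,\o)|\,|\hat g(\xi-\o)|\,d\o\,dy=\int_{K'}\bigl(|V_g u(y,\cdot)|*|\hat g|\bigr)(\xi)\,dy .
\]
Taking $L^p$-norms in $\xi$, then Minkowski's integral inequality in $y$, Young's inequality in $\o$ (convolution with $\hat g\in L^1$), and finally H\"older's inequality on the finite-measure set $K'$ give
\[
\norm{\hat u}_{L^p}\leq\norm{\hat g}_{L^1}\int_{K'}\norm{V_g u(y,\cdot)}_{L^p}\,dy\leq\norm{\hat g}_{L^1}\,|K'|^{1-1/p}\,\norm{u}_{M^p},
\]
again with the usual modification for $p=\infty$; hence $u\in M^p$ (supported in $K$) implies $u\in\cF L^p$.

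The computational steps are routine: two applications of Young's inequality together with the elementary identities for the Fourier transform of time-frequency shifts of the fixed window $g$. The one point that deserves care is making the use of the inversion formula \eqref{treduetre} legitimate for a general tempered distribution $u\in M^p$ rather than for an $L^2$ function, in particular justifying the interchange of $\cF$ with the vector-valued integral; this is handled in the standard way by testing against Schwartz functions, and for $p=\infty$ by first running the argument on the dense subspace $\cS(\rd)$ and then passing to the completion $\mathcal{M}^\infty$.
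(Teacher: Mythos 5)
Your proof is correct. The paper itself does not prove Lemma~\ref{lloc}; it simply cites \cite{fe89-1,feichtinger90,grobner,kasso07}, so there is no in-paper argument to compare against, but your argument is the standard one underlying those references. Both directions are handled cleanly: the observation that $V_g u(x,\cdot)$ vanishes for $x$ outside the compact set $K-\supp g$ when $g\in\cC_0^\infty$, plus Young's inequality applied to $V_g u(x,\cdot)=\hat u * \cF(\overline{T_x g})$, gives $\norm{u}_{M^p}\lesssim_K\norm{u}_{\cF L^p}$; and the weak STFT inversion formula together with $|\cF(M_\o T_y g)(\xi)|=|\hat g(\xi-\o)|$, Minkowski, Young, and H\"older on the finite-measure set $K'$ gives the reverse bound. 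One small remark: your closing caveat about ``passing to the completion $\mathcal{M}^\infty$'' is not quite what is needed, since the lemma is stated for all of $M^\infty$ rather than $\mathcal{M}^\infty$; but in fact no density argument is required there. For compactly supported $u\in M^\infty$ the integrand in the inversion formula is absolutely integrable (as $V_g u$ is bounded and supported in $K'\times\rd$ in the $y$-variable, while $\hat g\in L^1$), and $\hat u$ is a smooth Paley--Wiener function, so the pointwise identity for $\hat u(\xi)$ can be justified directly by pairing both sides against a Schwartz test function and using Fubini, which is the other route you already mention.
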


In order to state the dilation properties for modulation spaces, we introduce the indices:
$$ \mu_1(p)=\begin{cases}-1/{p^\prime} &  \quad {\mbox{if}} \quad 1\leq p\leq 2,\\
 -1/p &   \quad {\mbox{if}}   \quad  2\leq p\leq
 \infty,
 \end{cases}
 $$
and
$$ \mu_2(p)=\begin{cases}-1/p &  \quad {\mbox{if}} \quad 1\leq p\leq 2,\\
 -1/{p^\prime} &   \quad {\mbox{if}}  \quad   2\leq p\leq \infty.\\
 \end{cases}
 $$
For $\lambda>0$, we define
the dilation operator
$U_\lambda f(x)=f(\lambda
x)$.  Then, the dilation
properties of $M^p$ are as
follows (see \cite[Theorem
3.1]{sugimototomita}).

\begin{theorem}\label{dilprop}
We have:
 (i) For $\lambda\geq 1 $,
$$\| U_\lambda f\|_{M^p}\lesssim \lambda^{d\mu_1(p)}
\|f\|_{M^p},\quad\forall\,
f\in M^p(\rd).
$$
(ii) For $0<\lambda\leq 1 $,
$$\|  U_\lambda f\|_{M^p}\lesssim \lambda^{d\mu_2(p)}
\|f\|_{M^p},\quad\forall \,
f\in M^p(\rd).
$$
\end{theorem}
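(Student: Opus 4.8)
The plan is to reduce the estimate, through the short-time Fourier transform, to the three endpoint indices $p=1$, $p=2$, $p=\infty$, and then to fill in the range $1<p<\infty$ by complex interpolation of modulation spaces, \eqref{interpmp}. The case $p=2$ is trivial, since $M^2=L^2$ and $\|U_\lambda f\|_{L^2}=\lambda^{-d/2}\|f\|_{L^2}$. The real content sits at $p=1$ and $p=\infty$: one computes $d\mu_1(1)=d\mu_1(\infty)=0$ and $d\mu_2(1)=d\mu_2(\infty)=-d$, so for $\lambda\ge1$ one needs $\|U_\lambda f\|_{M^p}\lesssim\|f\|_{M^p}$ and for $0<\lambda\le1$ one needs $\|U_\lambda f\|_{M^p}\lesssim\lambda^{-d}\|f\|_{M^p}$, at $p\in\{1,\infty\}$.

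First I would record the behaviour of the STFT under dilation. Writing $g_\lambda:=U_{1/\lambda}g$ (so $g_\lambda(t)=g(t/\lambda)$) and substituting $t=\lambda y$ in \eqref{STFT}, one gets $V_g(U_\lambda f)(x,\o)=\lambda^{-d}V_{g_\lambda}f(\lambda x,\o/\lambda)$; taking $L^p$-norms over $\rdd$ and changing variables $(x,\o)\mapsto(\lambda x,\o/\lambda)$, whose Jacobian equals $1$, yields the exact identity $\|U_\lambda f\|_{M^p}=\lambda^{-d}\|V_{g_\lambda}f\|_{L^p}$ for every $p\in[1,\infty]$. Since the $M^p$-norm does not depend on the window, I would now fix $g$ to be an $L^2$-normalized Gaussian. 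Then the standard change-of-window estimate — from the reconstruction formula \eqref{treduetre} and Young's inequality one has the pointwise bound $|V_{g_\lambda}f|\le|V_gf|*|V_gg_\lambda|$ (interpreted weakly when $f$ is merely a distribution) — gives $\|V_{g_\lambda}f\|_{L^p}\le\|V_gg_\lambda\|_{L^1}\|f\|_{M^p}$, simultaneously for all $p\in[1,\infty]$. Combining, $\|U_\lambda f\|_{M^p}\le\lambda^{-d}\|V_gg_\lambda\|_{L^1}\|f\|_{M^p}$.

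It remains to estimate $\|V_gg_\lambda\|_{L^1}$, and this is where the Gaussian choice pays off: $g_\lambda$ is again a dilated Gaussian, the STFT of two Gaussians is a Gaussian up to a unimodular chirp, and completing the square in the integral gives, up to a dimensional constant, $\|V_gg_\lambda\|_{L^1}=(1+\lambda^2)^{d/2}$. Hence $\|U_\lambda f\|_{M^p}\lesssim\lambda^{-d}(1+\lambda^2)^{d/2}\|f\|_{M^p}$ for all $p$, which is $\lesssim\|f\|_{M^p}$ for $\lambda\ge1$ and $\lesssim\lambda^{-d}\|f\|_{M^p}$ for $0<\lambda\le1$ — exactly the required endpoint bounds. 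Finally I would interpolate. For $\lambda\ge1$, complex interpolation \eqref{interpmp} between the pairs $(M^1,M^2)$ and $(M^2,\mathcal{M}^\infty)$ turns the endpoint norm bounds $\lambda^0,\lambda^{-d/2},\lambda^0$ into $\|U_\lambda\|_{M^p\to M^p}\lesssim\lambda^{-d/p'}$ on $[1,2]$ and $\lesssim\lambda^{-d/p}$ on $[2,\infty]$, i.e.\ $\lesssim\lambda^{d\mu_1(p)}$, which is (i). For $0<\lambda\le1$, interpolating between the endpoint bounds $\lambda^{-d},\lambda^{-d/2},\lambda^{-d}$ gives $\lesssim\lambda^{-d/p}$ on $[1,2]$ and $\lesssim\lambda^{-d/p'}$ on $[2,\infty]$, i.e.\ $\lesssim\lambda^{d\mu_2(p)}$, which is (ii). Here one uses that $U_\lambda$ maps $\cS$ into $\cS$, dense in each $M^p$ with $p<\infty$ and in $\mathcal{M}^\infty$, so the interpolation applies to the single operator $U_\lambda$, and the endpoint $p=\infty$ in (i)--(ii) is already contained in the previous paragraph.

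I expect the only genuinely computational point to be the identity $\|V_gg_\lambda\|_{L^1}=(1+\lambda^2)^{d/2}$; in fact one only needs $\|V_gg_\lambda\|_{L^1}\asymp\max\{1,\lambda^d\}$, which also holds for a general Schwartz window, but then requires a short separate argument rather than an explicit integral. Everything else is a formal manipulation of the STFT together with interpolation. The one thing requiring care is that $\mu_1$ and $\mu_2$ are piecewise affine in $1/p$ with a corner at $p=2$, so the interpolation must be carried out separately on $[1,2]$ and on $[2,\infty]$, anchored at $p=2$.
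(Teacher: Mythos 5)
The paper does not prove this theorem; it cites it directly from Sugimoto and Tomita (\cite[Theorem 3.1]{sugimototomita}), so there is no internal proof to compare against. Your argument is correct and is, in substance, the standard proof of this result: the dilation covariance of the STFT plus the change-of-window convolution inequality reduce everything to computing $\|V_g g_\lambda\|_{L^1}$ for a Gaussian $g$, which yields the endpoint bounds at $p\in\{1,\infty\}$ (and indeed the slightly lossy bound $\lambda^{-d}(1+\lambda^2)^{d/2}$ for every $p$); the sharp exponent at intermediate $p$ then comes from interpolating against the exact $L^2$ identity, done separately on $[1,2]$ and $[2,\infty]$ because of the corner at $p=2$. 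You correctly note that the case $p=\infty$ must be handled by the direct STFT bound rather than by interpolation, since \eqref{interpmp} produces $\mathcal{M}^\infty$ rather than $M^\infty$. The explicit Gaussian computation $\|V_g g_\lambda\|_{L^1}\asymp(1+\lambda^2)^{d/2}$ checks out, and the interpolation exponents recover $d\mu_1(p)$ and $d\mu_2(p)$ exactly.
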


\noindent These dilation
estimates are sharp, as
discussed in
\cite{sugimototomita}, see also \cite{cordero2}.
\section{Preliminary results on
FIOs}\label{section3} In this
section we recall the
composition formula of a
pseudodifferential operator
and a FIO. Then we prove some
auxiliary results for FIOs
with phases having bounded
derivatives of order $\geq2$.
\subsection{Composition of
pseudodifferential and Fourier integral operators}
First, recall the general H\"ormander symbol class
$S^m_{\rho,\delta}$ of smooth
functions on $\rdd$ such that
$$|\partial^{\alpha}_x\partial^{\beta}_\o
\sigma\phas|\leq C_{\a,\beta}
\la\o\ra^{m-\rho
|\beta|+\delta
|\alpha|},\quad \phas\in\rdd.
$$
 A regularizing operator
is a pseudodifferential
operator
\[
Ru=\int e^{2\pi
ix\eta}r(x,\eta)\hat{u}(\eta)d\eta,
\]
with a symbol $r$ in the
Schwartz space
$\mathcal{S}(\R^{2d})$
(equivalently, an operator with kernel in $\mathcal{S}(\R^{2d})$,
which maps
$\mathcal{S}'(\R^{d})$ into
$ \mathcal{S}(\R^{d})$).
Then, the composition formula
for a pseudodifferential
operator and a FIO is as
follows (see, e.g., \cite{hormander},
\cite[Theorem
4.1.1]{mascarello-rodino},
\cite[Theorem
18.2]{Shubin91},
\cite{treves}; we limit ourselves to recall what is needed in the subsequent proofs).
\begin{theorem}\label{composition}
Let the symbol $\sigma$ and
the phase $\Phi$ satisfy the
assumptions in the
Introduction. Assume, in
addition, $\sigma(x,\eta)=0$
for $|\eta|\leq1$, if
$\Phi(x,\eta)$ is not linear
in $\eta$. Let $a(x,\eta)$ be
a symbol in $S^{m'}_{1,0}$.
Then,
\[
a(x,D)T=S+R,
\]
where $S$ is a FIO with the same phase
$\Phi$ and symbols $s(x,\eta)$, of
order $m+m'$, satisfying
\[
{\rm supp}\,s\subset {\rm
supp}\,\sigma\cap\{(x,\eta)\in\Lambda:\
(x,\nabla_x\Phi(x,\eta))\in{\rm
supp}\,a\},
\]
and $R$ is a regularizing
operator with symbol
$r(x,\eta)$ satisfying
\[
\Pi_\eta ({\rm
supp}\,r)\subset \Pi_\eta(
{\rm supp}\,\sigma),
\]
where $\Pi_\eta$ is the orthogonal
projection on $\R^d_\eta$.\par Moreover,
the symbol estimates satisfied by $s$
and the seminorm estimates of $r$ in
the Schwartz space are uniform when
$\sigma$ and $a$ vary in a bounded
subsets of $S^m_{1,0}$ and $S^{m'}_{1,0}$
respectively.
\end{theorem}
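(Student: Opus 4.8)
The plan is to carry out the standard oscillatory-integral computation behind the composition of a $\psdo$ with a FIO, the only extra care being the bookkeeping of the $\langle\eta\rangle$-weights and of the supports. First I would write, for $f\in\mathcal{S}(\rd)$,
\[
a(x,D)Tf(x)=\iiint e^{2\pi i[(x-y)\cdot\xi+\Phi(y,\eta)]}\,a(x,\xi)\,\sigma(y,\eta)\,\hat f(\eta)\,dy\,d\xi\,d\eta,
\]
understood as an iterated oscillatory integral, which is legitimate since $\sigma$ is compactly supported in $y$ and $a\in S^{m'}_{1,0}$, $\sigma\in S^{m}_{1,0}$. Then I would factor $\Phi(x,\eta)$ out of the $(y,\xi)$-integral: on a conic neighbourhood of $\overline{\Lambda'}$, where $\Phi$ is smooth and homogeneous of degree $1$ — the cut-off $\sigma(x,\eta)=0$ for $|\eta|\le1$ in the non-linear case being used exactly to stay away from the singularity at $\eta=0$, while no cut-off is needed when $\Phi$ is linear in $\eta$ — Taylor's formula with integral remainder gives
\[
\Phi(y,\eta)-\Phi(x,\eta)=(y-x)\cdot\Psi(x,y,\eta),\qquad \Psi(x,y,\eta):=\int_0^1\nabla_x\Phi\big(x+t(y-x),\eta\big)\,dt,
\]
with $\Psi(x,x,\eta)=\nabla_x\Phi(x,\eta)$ and $|\partial_x^{\mu}\partial_y^{\nu}\partial_\eta^{\gamma}\Psi(x,y,\eta)|\lesssim\langle\eta\rangle^{1-|\gamma|}$ by homogeneity. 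Substituting and changing variables $\xi\mapsto\xi+\Psi(x,y,\eta)$ turns the expression into $\int e^{2\pi i\Phi(x,\eta)}s(x,\eta)\hat f(\eta)\,d\eta$, with
\[
s(x,\eta)=\iint e^{2\pi i(x-y)\cdot\xi}\,a\big(x,\xi+\Psi(x,y,\eta)\big)\,\sigma(y,\eta)\,dy\,d\xi .
\]

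Next I would reduce this amplitude to a genuine symbol. For fixed $\eta$ the integral is of the classical form $\iint e^{2\pi i(x-y)\cdot\xi}b(x,y,\xi)\,dy\,d\xi$, and the standard reduction of double-symbol amplitudes (\cite[Ch.~I]{treves}, \cite[\S23]{Shubin91}, \cite{hormander}, \cite{mascarello-rodino}) yields the asymptotic expansion
\[
s(x,\eta)\sim\sum_{\alpha}\frac{(2\pi i)^{-|\alpha|}}{\alpha!}\,\partial_\xi^{\alpha}\partial_y^{\alpha}\big[a(x,\xi+\Psi(x,y,\eta))\,\sigma(y,\eta)\big]\big|_{y=x,\ \xi=0},
\]
with principal term $a(x,\nabla_x\Phi(x,\eta))\,\sigma(x,\eta)$. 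Differentiating this series and using the bounds on $\Psi$, the symbol estimates for $a$ and $\sigma$, and the fact that \eqref{nondeg} forces $\langle\nabla_x\Phi(x,\eta)\rangle\asymp\langle\eta\rangle$ on $\supp\sigma$ (via the Euler identity $\nabla_x\Phi=(\partial^2\Phi/\partial x\,\partial\eta)\,\eta$, which is nonzero for $\eta\neq0$ precisely because the mixed Hessian is invertible, so that $|\nabla_x\Phi|\gtrsim|\eta|$ on the sphere by compactness), one checks term by term that the $\alpha$-th summand lies in $S^{m+m'-|\alpha|}_{1,0}$, hence $s\in S^{m+m'}_{1,0}$, with seminorms controlled by finitely many seminorms of $a$ and $\sigma$ — this gives the uniformity over bounded subsets. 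Since every term carries a factor $\partial_y^{\alpha}\sigma(x,\eta)$ and a factor $(\partial_\xi^{\alpha}a)(x,\nabla_x\Phi(x,\eta))$, I would take $s$ to be a Borel sum of the series supported in $\supp\sigma\cap\{(x,\eta)\in\Lambda:(x,\nabla_x\Phi(x,\eta))\in\supp a\}$ and let $S$ be the FIO with phase $\Phi$ and symbol $s$.

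Finally, $R:=a(x,D)T-S$ is, by construction, the operator whose amplitude is the remainder left after subtracting the partial sums of the above series from the oscillatory integral for $s$; such a remainder is rapidly decreasing in $\eta$ together with all its derivatives and compactly supported in $x$, so it is a symbol $r\in\mathcal{S}(\rdd)$ (equivalently, $R$ has Schwartz kernel), again with seminorms bounded in terms of those of $a,\sigma$; since $r$ is assembled out of $\sigma$ and its $y$-derivatives, $\Pi_\eta(\supp r)\subset\Pi_\eta(\supp\sigma)$. The main obstacle is the amplitude-reduction step: one must first render the oscillatory integral for $s$ absolutely convergent by the usual device of inserting $\langle\xi\rangle^{-2N}(1-\Delta_y)^{N}$ under the integral sign, and then push the Taylor expansion in $y$ around $y=x$ far enough and estimate the remainder carefully enough to obtain, with constants uniform over bounded subsets of the H\"ormander classes, both $s\in S^{m+m'}_{1,0}$ and the rapid decay of the error — and it is here that the homogeneity of $\Phi$, the cut-off near $\eta=0$, and the non-degeneracy \eqref{nondeg} all genuinely enter.
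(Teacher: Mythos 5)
The paper does not prove Theorem~\ref{composition} at all: it explicitly reads ``we limit ourselves to recall what is needed in the subsequent proofs'' and points to \cite{hormander}, \cite[Theorem 4.1.1]{mascarello-rodino}, \cite[Theorem 18.2]{Shubin91}, \cite{treves}. Your sketch is exactly the classical computation those sources carry out (write the composition as a triple oscillatory integral, pull $\Phi(x,\eta)$ out of the phase by the averaged-gradient Taylor device, reduce the remaining double amplitude to a Kohn--Nirenberg symbol via the standard asymptotic expansion, and absorb the tail into a Schwartz remainder), so in that sense you and the paper are relying on the same argument, and your reading-off of the support inclusions and of the order $m+m'$ from the leading term $a(x,\nabla_x\Phi)\sigma$, including the use of Euler's identity $\nabla_x\Phi=(\partial^2_{x,\eta}\Phi)\,\eta$ together with \eqref{nondeg} to get $\langle\nabla_x\Phi\rangle\asymp\langle\eta\rangle$ on $\supp\sigma$, is correct.

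There is, however, one genuine gap you should close before the Taylor step is legitimate. You write
$\Phi(y,\eta)-\Phi(x,\eta)=(y-x)\cdot\Psi(x,y,\eta)$ with
$\Psi(x,y,\eta)=\int_0^1\nabla_x\Phi(x+t(y-x),\eta)\,dt$, but $\Phi$ is only defined and smooth on the open conic set $\Lambda\supset\overline{\Lambda'}$, not on all of $\R^d\times(\R^d\setminus\{0\})$, so the segment $\{(x+t(y-x),\eta):t\in[0,1]\}$ need not stay inside the domain of $\Phi$ when $x$ is far from $y$. The standard repair --- which is precisely what the cited references do --- is to insert a cutoff $\tilde\chi(x-y)$, equal to $1$ near $x=y$ and supported in $|x-y|<\epsilon$ with $\epsilon$ so small that for $(y,\eta)\in\supp\sigma$ the whole segment remains in $\Lambda$ (possible because $\Lambda$ is open and contains $\overline{\Lambda'}$). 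On the complementary region $\supp(1-\tilde\chi)$ one has $|x-y|\geq\epsilon/2$, so repeated integration by parts in $\xi$ against $e^{2\pi i(x-y)\cdot\xi}$ gains arbitrary decay in $\xi$, and the resulting contribution is a regularizing operator to be lumped into $R$. Without this near-diagonal localization, $\Psi$ is simply not defined and the change of variables $\xi\mapsto\xi+\Psi(x,y,\eta)$ cannot be performed. Once you add it, your argument goes through; and this cutoff also makes the $\Pi_\eta$-support statement for $r$ transparent, since both the on- and off-diagonal pieces manifestly carry the factor $\sigma(y,\eta)$ so that the whole operator, hence $R=a(x,D)T-S$, commutes with pre-multiplication by any Fourier cutoff $\phi(D)$ with $\phi\equiv1$ on $\Pi_\eta(\supp\sigma)$.
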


\subsection {FIOs with phases
having bounded derivatives of
order $\geq2$} In what
follows we present a
micro-localized version of
\cite[Theorems 3.1,
4.1]{fio1}, where the
hypotheses of such theorems
are satisfied only in the
$\epsilon$-neighborhood
$\Sigma_\epsilon$ of the
support of $\sigma$, $\sigma$
being the FIO's symbol.
Namely, set
$$\Sigma_\epsilon=\cup_{(x_0,\eta_0)\in{\rm
supp}\,\sigma} B_\epsilon
(x_0,\eta_0).$$

\begin{proposition}\label{pro1}
Let $\sigma\in S^0_{0,0}$ and
$\Sigma_\epsilon$ as above.
Let $\Phi$ be a real-valued function
defined and smooth on
$\Sigma_\epsilon$. Suppose
that
\begin{equation}\label{deriv}
|\partial^\alpha_z
\Phi(z)|\leq C_\alpha\quad
{\it for}\ |\alpha|\geq2,\
z=(x,\eta)\in\Sigma_\epsilon.
\end{equation}
Let
$g,\gamma\in\mathcal{S}(\R^d)$,
$\|g\|_{L^2}=
\|\gamma\|_{L^2}=1$, with
${\rm supp}\, \gamma\subset
B_{\epsilon/4}(0)$, ${\rm
supp}\, \hat{g}\subset
B_{\epsilon/4}(0)$. Then, for
every $N\geq0$, there exists
a constant $C>0$ such that
\[
|\langle T (M_\omega T_y g),
M_{\omega'}
T_{y'}\gamma\rangle|\leq
C\mathds{1}_{\Sigma_{\epsilon/2}}(y',\omega)\langle
\nabla_x\Phi(y',\omega)-\omega'\rangle^{-N}\langle
\nabla_\eta\Phi(y',\omega)-y\rangle^{-N}.
\]
The constant $C$ only depends
on $N$, $g,\gamma$, and upper
bounds for a finite number of
derivatives of $\sigma$ and
on a finite number of
constants in \eqref{deriv}.
\end{proposition}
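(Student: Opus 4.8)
The plan is to compute the matrix element explicitly, recognize it as a unimodular constant times the Fourier transform of a compactly supported amplitude whose derivatives are bounded uniformly in the parameters $(y',\omega)$, and then read off the claimed rapid decay. To begin, using $\widehat{M_\omega T_y g}(\eta)=e^{-2\pi iy(\eta-\omega)}\hat g(\eta-\omega)$ and $M_{\omega'}T_{y'}\gamma(x)=e^{2\pi i\omega'x}\gamma(x-y')$, I would write the pairing as the absolutely convergent double integral
\[
\langle T(M_\omega T_yg),M_{\omega'}T_{y'}\gamma\rangle=\iint e^{-2\pi i\omega'x}\,\overline{\gamma(x-y')}\,e^{2\pi i\Phi(x,\eta)}\sigma(x,\eta)\,e^{-2\pi iy(\eta-\omega)}\hat g(\eta-\omega)\,d\eta\,dx,
\]
and substitute $x=y'+t$, $\eta=\omega+s$, so that $(t,s)$ ranges over the fixed compact set $B_{\epsilon/4}(0)\times B_{\epsilon/4}(0)$ by the support hypotheses on $\gamma$ and $\hat g$; in particular $|(t,s)|<\epsilon/2$ throughout.

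Next comes the elementary support bookkeeping. If $(y',\omega)\notin\Sigma_{\epsilon/2}$, then $(y'+t,\omega+s)\notin\mathrm{supp}\,\sigma$ for every $(t,s)$ in the domain, the integrand vanishes identically, and the matrix element is zero; this produces the factor $\mathds{1}_{\Sigma_{\epsilon/2}}(y',\omega)$. When $(y',\omega)\in\Sigma_{\epsilon/2}$, every point $(y'+t,\omega+s)$ for which the integrand does not vanish lies in $\Sigma_\epsilon$, so $\Phi$ is smooth there and satisfies \eqref{deriv}.

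Then I would Taylor-expand the phase about $(y',\omega)$,
\[
\Phi(y'+t,\omega+s)=\Phi(y',\omega)+\nabla_x\Phi(y',\omega)\cdot t+\nabla_\eta\Phi(y',\omega)\cdot s+R(t,s),
\]
where the remainder obeys $\partial^\alpha_{(t,s)}R(t,s)=\partial^\alpha_z\Phi(y'+t,\omega+s)$ for $|\alpha|\geq2$, hence $|\partial^\alpha_{(t,s)}R|\leq C_\alpha$ uniformly in $(y',\omega)$ by \eqref{deriv}, while $R$ and its first derivatives are $O(\epsilon^2)$ and $O(\epsilon)$ by the mean value theorem. Pulling out the unimodular factor $e^{2\pi i(\Phi(y',\omega)-\omega'y')}$, the matrix element becomes
\[
e^{2\pi i(\Phi(y',\omega)-\omega'y')}\iint e^{2\pi i(\nabla_x\Phi(y',\omega)-\omega')\cdot t}\,e^{2\pi i(\nabla_\eta\Phi(y',\omega)-y)\cdot s}\,a(t,s)\,dt\,ds,
\]
with
\[
a(t,s)=\overline{\gamma(t)}\,\hat g(s)\,\sigma(y'+t,\omega+s)\,e^{2\pi iR(t,s)}.
\]
Using $\sigma\in S^0_{0,0}$, the fact that $\gamma,g\in\mathcal{S}$, and the chain rule applied to $e^{2\pi iR}$, the function $a$ is smooth, supported in $B_{\epsilon/4}(0)\times B_{\epsilon/4}(0)$, and satisfies $\|\partial^\alpha_{(t,s)}a\|_{L^\infty}\leq C_\alpha$ with $C_\alpha$ depending only on finitely many derivatives of $\gamma,g,\sigma$ and finitely many of the constants in \eqref{deriv}.

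Finally, the double integral is (up to the immaterial sign in the Fourier convention) a Fourier transform of $a$ evaluated at $\bigl(\nabla_x\Phi(y',\omega)-\omega',\,\nabla_\eta\Phi(y',\omega)-y\bigr)$. Integrating by parts repeatedly in $t$ and, independently, in $s$, and invoking the uniform $L^1$-bounds on the derivatives of $a$ (which has fixed compact support), one obtains for every $N$
\[
|\langle T(M_\omega T_yg),M_{\omega'}T_{y'}\gamma\rangle|\leq C_N\,\langle\nabla_x\Phi(y',\omega)-\omega'\rangle^{-N}\,\langle\nabla_\eta\Phi(y',\omega)-y\rangle^{-N}
\]
on $\Sigma_{\epsilon/2}$, which combined with the vanishing established above is the assertion, with a constant of the required form. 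The step I expect to be delicate is the third one, namely checking that the Taylor remainder gives rise to a genuine amplitude with parameter-uniform bounds; this is precisely where the compact frequency support of $g$ and compact spatial support of $\gamma$ are essential, since the ``$\epsilon/4$'' conditions confine $(t,s)$ to a small neighborhood of the origin and thereby keep $e^{2\pi iR(t,s)}$ and all its derivatives under control using only the bounds \eqref{deriv} on derivatives of order $\geq2$.
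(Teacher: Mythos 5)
Your proposal is correct and follows essentially the same route as the paper's proof: both rewrite the matrix element via a change of variables (the paper phrases it through translation/modulation operator identities) as an oscillatory integral over the fixed compact set $B_{\epsilon/4}(0)\times B_{\epsilon/4}(0)$, observe it vanishes off $\Sigma_{\epsilon/2}$, Taylor-expand $\Phi$ at $(y',\omega)$ with the integral-form second-order remainder, and then integrate by parts to extract the decay in $\langle\nabla_x\Phi(y',\omega)-\omega'\rangle$ and $\langle\nabla_\eta\Phi(y',\omega)-y\rangle$, using \eqref{deriv} to bound the derivatives of the amplitude $e^{2\pi iR}\sigma\,\overline{\gamma}\,\hat g$ uniformly in $(y',\omega)$.
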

\begin{proof}
 We can write
\begin{align*}
  \langle T &(M_\omega T_y g),
M_{\omega'} T_{y'}\gamma\rangle \\
   =& \intrd T M_{\omega}T_{y}g(x)
   \overline{M_{\omega'}T_{y'}\gamma(x)}\,dx \\
   =& \intrd\intrd e^{2\pi i \Phi\phas}\sigma\phas
    T_{\omega}M_{-y}\hat{g}(\o)M_{-\omega'}T_{y'}
    \bar{\gamma}(x)\,dx
   d\o\\
   =&\intrd\intrd M_{(0,-y)}T_{(0,-\omega)}\left(e^{2\pi i \Phi\phas}\sigma\phas\right) \hat{g}(\o)M_{-\omega'}T_{y'}\bar{\gamma}(x)\,dx
   d\o\\
   =&\intrd\intrd T_{(-y',0)} M_{-(\omega',0)}
   M_{(0,-y)}T_{(0,-\omega)}\left(e^{2\pi i \Phi\phas}
   \sigma\phas\right) \bar{\gamma}(x)\hat{g}(\o)\,dx
   d\o\\
    =&\intrd\intrd e^{2\pi i [\Phi(x+y',\eta+\omega)-
    (\omega',y)\cdot (x+y',\o)]}\sigma(x+y',\eta+\omega)
    \bar{\gamma}(x)\hat{g}(\o)\,dx
   d\o\\
   =&\int_{B_{\epsilon/4}(0)}\int_{B_{\epsilon/4}(0)}
e^{2\pi i
[\Phi((x+y',\eta+\omega))-(\omega',y)\cdot
(x+y',\o)]}\sigma(x+y',\eta+\omega)
\bar{\gamma}(x)\hat{g}(\o)\,dx
   d\o.
\end{align*}
Observe that, if
$(y',\omega)\not\in\Sigma_{\epsilon/2}$,
 by the assumptions on the
support of $\gamma$ and
$\hat{g}$ and the triangle
inequality, this integral
vanishes.\\
 Hence, assume
$(y',\omega)\in
\Sigma_{\epsilon/2}$. Since
$\Phi$ is smooth on
$\Sigma_\epsilon$, we perform
a Taylor expansion of
$\Phi\phas$ at $(y',\omega)$
and obtain
$$\Phi(x+y',\eta+\omega)=\Phi(y',\omega)+
\nabla_z\Phi(y',\omega)\cdot
(x,\o)+
\Phi_{2,(y',\omega)}\phas,
$$
for $z=(x,\eta)\in
B_{\epsilon/4}(0)\times
B_{\epsilon/4}(0)$, where the
remainder is given by
 \begin{equation}
  \label{eq:c11}
\Phi_{2,(y',\omega)}\phas=2\sum_{|\a|=2}\int_0^1(1-t)
\partial^\a
\Phi((y',\omega)+t\phas)\,dt\frac{\phas^\a}{\a!}.
\end{equation}
Notice that the segment
$(y',\omega)+t\phas$, $0\leq
t\leq1$, belongs entirely to
$\Sigma_\epsilon$ if
$(x,\eta)\in
B_{\epsilon/4}(0)\times
B_{\epsilon/4}(0)$.\\ Whence,
we can write
\begin{align*}
  |\langle& T (M_\omega T_y g),
M_{\omega'} T_{y'}\gamma\rangle|
    =\Big|\int_{B_{\epsilon/4}(0)}\int_{B_{\epsilon/4}(0)}
    e^{2\pi i \{[\nabla_z\Phi(y',\omega)-(\omega',y)]
    \cdot
     \phas\}}\\
    &\qquad\qquad\qquad\times e^{2\pi i\Phi_{2,
    (y',\omega)}\phas} \sigma(x+y',\eta+\omega)
    \bar{\gamma}(x)\hat{g}(\o)\,dx
    d\o\Big|.
\end{align*}
For $N\in\bN$, using the identity:
\begin{multline*}
(1-\Delta_x)^N(1-\Delta_\eta)^N
 e^{2\pi i \{[\nabla_z\Phi(y',\omega)-(\omega',y)]\cdot
  \phas\}}=\la
    2\pi(\nabla_x\Phi(y',\omega)-\omega')\rangle^{2N}
   \\
    \times
\langle2\pi(\nabla_\eta\Phi(y',\omega)-y)\rangle^{2N}e^{2\pi
i
\{[\nabla_z\Phi(y',\omega)-(\omega',y)]\cdot
  \phas\}},
\end{multline*} we integrate by parts and
obtain
\begin{align*}
  |\langle T (M_\omega T_y g),
M_{\omega'}
T_{y'}\gamma\rangle|&
    =\langle2\pi(\nabla_x\Phi(y',\omega)-\omega')\rangle^{-2N}
\langle2\pi(\nabla_\eta\Phi(y',\omega)-y)\rangle^{-2N}\\
\times\Big|\int_{B_{\epsilon/4}(0)}\int_{B_{\epsilon/4}(0)}
&e^{2\pi i
\{[\nabla_z\Phi(y',\omega)-(\omega',y)]\cdot
    \phas\}}\\
    \times (1-\Delta_x)^N & (1-\Delta_\eta)^N
    [e^{2\pi
    i\Phi_{2,(y',\omega)}\phas}
\sigma(x+y',\eta+\omega)
    \bar{\gamma}(x)\hat{g}(\o)]\,dx   d\o\Big|.
\end{align*}
Hence it suffices to apply
the Leibniz formula taking
into account that, as a
consequence of \eqref{deriv},
we have the estimates
$\partial_z^{\alpha}
\Phi_{2,(y',\omega)}(z)=O(\langle
z\rangle^{2})$ for
$z=(x,\eta)\in
B_{\epsilon/4}(0)\times
B_{\epsilon/4}(0)$, uniformly
with respect to
$(y',\omega)$.
\end{proof}

In the following proposition, where supp $\sigma$ and $\Sigma_\epsilon$ are understood to be bounded in the $\eta$ variables, we prove the $M^p$-continuity of $T$ with uniform norm bound with respect to the constants $C_\alpha$ in \eqref{deriv}.

\begin{proposition}\label{pro2}
Consider a symbol $\sigma\in
S^0_{0,0}$, with
$\sigma(x,\eta)=0$ for
$|\eta|\leq 2$. Let moreover
$\Omega\subset\R^d$ open and
$\Gamma\subset\R^d\setminus\{0\}$
conic and open, such that
${\Omega}\times\Gamma$
contains the
$\epsilon$-neighborhood
$\Sigma_\epsilon$ of ${\rm
supp}\,\sigma$.\par Consider
then a phase $\Phi\in
C^\infty({\Omega}\times\Gamma)$,
positively homogeneous of
degree 1 in $\eta$,
satisfying
\begin{equation}\label{i0tris}
|\partial^\alpha_z\Phi(x,\eta)|\leq
C_\alpha\quad {\rm for}\
|\alpha|\geq2,\
(x,\eta)\in\Sigma_\epsilon,
\end{equation}
\begin{equation}\label{i1tris}
\left|{\rm det}\,
\left(\frac{\partial^2\Phi}{\partial
x_i\partial \eta_l}\Big|_{
(x,\eta)}\right)\right|\geq\delta>0,\quad
\forall
(x,\eta)\in{\Omega}\times\Gamma,
\end{equation}
and such that
\begin{equation}\label{i2tris}
\forall x\in{\Omega},\
\textrm{the map}\
\Gamma\ni\eta\mapsto
\nabla_x\Phi(x,\eta)\
\textrm{is a diffeomorphism
onto the range},
\end{equation}
\begin{equation}\label{i3tris}
\forall \eta\in\Gamma,\
\textrm{the map}\ {\Omega}\ni
x\mapsto
\nabla_\eta\Phi(x,\eta)\
\textrm{is a diffeomorphism
onto the range}.
\end{equation}
Then, for every $1\leq
p\leq\infty$ it turns out
\[
\|Tu\|_{M^p}\leq
C\|u\|_{M^p},\quad \forall
u\in\mathcal{S}(\R^d),
 \]
where the constant $C$
depends only on
$\epsilon,\delta$, on upper
bounds for a finite number of
derivatives of $\sigma$ and a
finite number of the
constants in \eqref{i0tris}.
\end{proposition}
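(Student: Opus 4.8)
The strategy is to transfer the estimate to the time--frequency side: the $M^p$-boundedness of $T$ will follow from the $L^p(\rdd)$-boundedness of the integral operator whose kernel is the Gabor matrix of $T$, and that matrix is exactly what Proposition \ref{pro1} estimates. Concretely, I would first fix windows $g,\gamma\in\cS(\rd)$ with $\|g\|_{L^2}=\|\gamma\|_{L^2}=1$, $\supp\gamma\subset B_{\epsilon/4}(0)$, $\supp\hat g\subset B_{\epsilon/4}(0)$, as in Proposition \ref{pro1}; since the modulation norm is window-independent up to equivalence, I may use $\|u\|_{M^p}\asymp\|V_gu\|_{L^p}$ and $\|Tu\|_{M^p}\asymp\|V_\gamma(Tu)\|_{L^p}$. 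Since $\Sigma_\epsilon\subset\Omega\times\Gamma$, the phase $\Phi$ is smooth on $\Sigma_\epsilon$ and \eqref{i0tris} gives \eqref{deriv}, so Proposition \ref{pro1} applies to our $\sigma\in S^0_{0,0}$. For $u\in\cS(\rd)$, inserting the inversion formula \eqref{treduetre} and using the continuity $T:\cS(\rd)\to\cC_0^\infty(\rd)$ gives, in the weak sense, $Tu=\intrdd V_gu(y,\omega)\,T(M_\omega T_y g)\,dy\,d\omega$, hence
\[
V_\gamma(Tu)(y',\omega')=\intrdd K((y',\omega'),(y,\omega))\,V_gu(y,\omega)\,dy\,d\omega,
\]
with $K((y',\omega'),(y,\omega)):=\langle T(M_\omega T_yg),M_{\omega'}T_{y'}\gamma\rangle$. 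It then suffices to bound the integral operator with kernel $K$ on $L^p(\rdd)$.

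By Proposition \ref{pro1}, for each $N$ there is $C>0$, depending only on $N$, $g$, $\gamma$, finitely many derivatives of $\sigma$ and finitely many of the constants in \eqref{i0tris}, such that
\[
|K((y',\omega'),(y,\omega))|\leq C\,\mathds{1}_{\Sigma_{\epsilon/2}}(y',\omega)\,\langle\nabla_x\Phi(y',\omega)-\omega'\rangle^{-N}\langle\nabla_\eta\Phi(y',\omega)-y\rangle^{-N}.
\]
Fixing $N=d+1$, I would run Schur's test: the integral operator with kernel $K$ is bounded on $L^1(\rdd)$ and on $L^\infty(\rdd)$, hence on every $L^p(\rdd)$ with $1\leq p\leq\infty$, once one checks
\[
\sup_{(y',\omega')}\intrdd |K((y',\omega'),(y,\omega))|\,dy\,d\omega<\infty,\qquad \sup_{(y,\omega)}\intrdd |K((y',\omega'),(y,\omega))|\,dy'\,d\omega'<\infty.
\]

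For the first bound, integrate first in $y$, using $\int_{\rd}\langle\nabla_\eta\Phi(y',\omega)-y\rangle^{-N}\,dy=\int_{\rd}\langle y\rangle^{-N}\,dy=:C_N<\infty$, and then in $\omega$ over the slice $\{\omega:(y',\omega)\in\Sigma_{\epsilon/2}\}\subset\Gamma$: by \eqref{i2tris} the map $\omega\mapsto\nabla_x\Phi(y',\omega)$ is a diffeomorphism onto its range, with Jacobian determinant $\det(\partial^2\Phi/\partial x_i\partial\eta_l)$ of absolute value $\geq\delta$ by \eqref{i1tris}, so the substitution $w=\nabla_x\Phi(y',\omega)$ (and dropping the indicator) gives $\int\langle\nabla_x\Phi(y',\omega)-\omega'\rangle^{-N}\,d\omega\leq\delta^{-1}\int_{\rd}\langle w-\omega'\rangle^{-N}\,dw=\delta^{-1}C_N$; altogether the first supremum is $\leq C\delta^{-1}C_N^2$. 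The second bound is symmetric: integrate first in $\omega'$ ($\int_{\rd}\langle\nabla_x\Phi(y',\omega)-\omega'\rangle^{-N}\,d\omega'=C_N$), then in $y'$ over $\{y':(y',\omega)\in\Sigma_{\epsilon/2}\}\subset\Omega$, using \eqref{i3tris} (the map $y'\mapsto\nabla_\eta\Phi(y',\omega)$ is a diffeomorphism onto its range) and the same Jacobian lower bound $\delta$ from \eqref{i1tris}. Both Schur constants then depend only on $\epsilon$, $\delta$, $C$ and $N=d+1$, i.e. on the quantities allowed in the statement.

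Consequently $\|Tu\|_{M^p}\leq C\|u\|_{M^p}$ for all $u\in\cS(\rd)$ and $1\leq p\leq\infty$, and the estimate extends to all of $M^p$ for $p<\infty$ and to $\mathcal{M}^\infty$ for $p=\infty$ by density of $\cS(\rd)$. I expect the only genuinely delicate point to be the legitimacy of the two changes of variables in the Schur test --- that on the relevant pieces of $\Sigma_{\epsilon/2}$ the maps $\omega\mapsto\nabla_x\Phi(y',\omega)$ and $y'\mapsto\nabla_\eta\Phi(y',\omega)$ are honest diffeomorphisms with Jacobian bounded below uniformly --- but this is exactly what hypotheses \eqref{i1tris}, \eqref{i2tris}, \eqref{i3tris} provide; everything else is Proposition \ref{pro1} together with the integrability of $\langle\cdot\rangle^{-N}$ on $\rd$ for $N>d$.
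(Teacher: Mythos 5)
Your proof is correct and follows essentially the same route as the paper's: reduce via the STFT inversion formula to $L^p(\R^{2d})$-boundedness of the integral operator whose kernel is the Gabor matrix $\langle T(M_\omega T_y g), M_{\omega'}T_{y'}\gamma\rangle$, bound that kernel by Proposition \ref{pro1}, and conclude by Schur's test with the two changes of variables $\omega\mapsto\nabla_x\Phi(y',\omega)$ and $y'\mapsto\nabla_\eta\Phi(y',\omega)$, whose Jacobians are controlled by \eqref{i1tris}--\eqref{i3tris}. The only difference is that you write out both Schur integrals symmetrically, whereas the paper carries out one and leaves the other to the reader.
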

\begin{proof}
Let
$g,\gamma\in\mathcal{S}(\R^d)$,
with
$\|g\|_{L^2}=\|\gamma\|_{L^2}=1$,
${\rm supp}\, \gamma\subset
B_{\epsilon/4}(0)$, ${\rm
supp}\, \hat{g}\subset
B_{\epsilon/4}(0)$. Let
$u\in\mathcal{S}(\R^d)$. The
inversion formula
\eqref{treduetre} for the
STFT gives
\[
V_\gamma
(Tu)(y',\omega')=\int_{\R^{2d}}\langle
T(M_\omega T_y g),
M_{\omega'}
T_{y'}\gamma\rangle V_g
u(y,\omega)dy\,d\omega.
\]
The desired estimate follows
if we prove that the map
$K_T$ defined by
\[
K_T
G(y',\omega')=\int_{\R^{2d}}\langle
T(M_\omega T_y g),
M_{\omega'}
T_{y'}\gamma\rangle
G(y,\omega)dy\,d\omega
\]
is continuous on
$L^p(\R^{2d})$. By Schur's
test (see e.g. \cite[Lemma
6.2.1]{grochenig}) it
suffices to prove that its
integral kernel
\[
K_T(y',\omega';y,\omega)=\langle
T(M_\omega T_y g),
M_{\omega'}T_{y'}\gamma\rangle
\]
satisfies
\begin{equation}\label{schur1}
K_T\in
L^\infty_{y',\omega'}(L^1_{y,\omega}),
\end{equation}
and
\begin{equation}\label{schur2}
K_T\in
L^\infty_{y,\omega}(L^1_{y',\omega'}).
\end{equation}
Let us verify \eqref{schur1}.
By Proposition \eqref{pro1}
and the fact that
$\mathds{1}_{\Sigma_{\epsilon/2}}(y',\omega)\leq
\mathds{1}_{{\Omega}}(y')
\mathds{1}_{\Gamma}(\omega)$
we have
\[
|K_T(y',\omega';y,\omega)|\leq
C\mathds{1}_{{\Omega}}(y')
\mathds{1}_{\Gamma}(\omega)\langle
\nabla_x\Phi(y',\omega)-\omega'\rangle^{-N}\langle
\nabla_\eta\Phi(y',\omega)-y\rangle^{-N},\quad
\forall N\in\mathbb{N}.
\]
Hence \eqref{schur1} will be
proved if we verify that
there exists a constant $C>0$
such that
\[
\int\mathds{1}_{\Gamma}(\omega)\langle
\nabla_x\Phi(y',\omega)-\omega'\rangle^{-N}\,d\omega\leq
C,\quad \forall
(y',\omega')\in{\Omega}\times\R^d.
\]
In order to prove this
estimate we perform the
change of variable
\[
\beta_{y'}:\Gamma\ni\omega\longmapsto\nabla_x\Phi(y',\omega),
\]
which is a diffeomorphism on
the range by \eqref{i2tris}.
The Jacobian determinant of
its inverse is homogeneous of
degree 0 in $\omega$ and
uniformly bounded with
respect to $y'$ by the
hypotheses \eqref{i0tris} and
\eqref{i1tris}. Hence, the
last integral is, for $N>d$,
\begin{align*}
&\lesssim\int_{\beta_{y'}(\Gamma)}
\langle\tilde{\omega}-\omega'\rangle^{-N}\,d\tilde{\omega}\\
&\leq \int_{\R^{d}}
\langle\tilde{\omega}-\omega'\rangle^{-N}\,d\tilde{\omega}=C.
\end{align*}
The proof of \eqref{schur2}
is analogous and
left to the reader.\par
Finally, the uniformity of the
norm of $T$ as a bounded
operator, established in the
last part of the statement,
follows from the proof
itself.
\end{proof}

\section{Singularity at the
origin} \label{section4} In
this section we prove Theorem
\ref{maintheorem} for an
operator satisfying the
assumptions stated there and
whose symbol $\sigma$
satisfies, in addition,
\begin{equation}
\sigma(x,\eta)=0\quad {\rm
for}\ |\eta|\geq4.
\end{equation}
Here we do not use the
hypothesis \eqref{nondeg}.
Indeed, we will deduce the
desired result from the
following one, after
extending $\Phi|_{\Lambda'}$
to a phase function, still
denoted by $\Phi(x,\eta)$,
positively homogeneous of
degree 1 in $\eta$, (possibly
degenerate) and  everywhere
defined in
$\R^d\times(\R^d\setminus\{0\})$.
\begin{proposition}\label{proposizione} Let
$\sigma(x,\eta)$ be a smooth
symbol satisfying
\begin{equation}\label{sigmaloc}
\sigma(x,\eta)=0\quad {\rm
for}\ |x|+|\eta|\geq R,
\end{equation}
for some $R>0$. Let
$\Phi(x,\eta)$, $x\in\R^d$,
$\eta\in\R^d\setminus\{0\}$,
be a smooth phase function,
positively homogeneous of
degree 1 in $\eta$. Then the
corresponding FIO $T$ extends
to a bounded operator on
$M^p$, for every $1\leq
p<\infty$, and on
$\mathcal{M}^\infty$.
\end{proposition}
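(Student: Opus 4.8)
The plan is to reduce everything to the situation covered by Proposition \ref{pro1}, using the short-time Fourier transform to recast $T$ as an integral operator on phase space and then applying Schur's test. Concretely, fix windows $g,\gamma\in\mathcal{S}(\R^d)$ with $\|g\|_{L^2}=\|\gamma\|_{L^2}=1$ and supports of $\gamma$ and $\hat g$ contained in a small ball $B_{\epsilon/4}(0)$, where $\epsilon$ is chosen so that the $\epsilon$-neighborhood of $\mathrm{supp}\,\sigma$ is bounded (this uses \eqref{sigmaloc}). By the inversion formula \eqref{treduetre}, for $u\in\mathcal{S}(\R^d)$ we have
\[
V_\gamma(Tu)(y',\omega')=\int_{\R^{2d}}\langle T(M_\omega T_y g),M_{\omega'}T_{y'}\gamma\rangle\, V_g u(y,\omega)\,dy\,d\omega,
\]
so it suffices to show that the kernel $K_T(y',\omega';y,\omega)=\langle T(M_\omega T_y g),M_{\omega'}T_{y'}\gamma\rangle$ defines a bounded operator on $L^p(\R^{2d})$ for all $1\leq p\leq\infty$; boundedness on $M^p$ for $1\leq p<\infty$ and on $\mathcal{M}^\infty$ then follows exactly as in the proof of Proposition \ref{pro2}, invoking density of $\mathcal{S}$ in $M^p$ and in $\mathcal{M}^\infty$.

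The key point is that, although here the phase $\Phi$ may be degenerate and is only homogeneous of degree $1$ (hence possibly singular at $\eta=0$), the symbol $\sigma$ is now compactly supported in \emph{both} variables by \eqref{sigmaloc}. So I would first multiply $\Phi$ by a cutoff $\chi(\eta)$ equal to $1$ on the $\eta$-projection of the (bounded) set $\Sigma_\epsilon$ and vanishing near $\eta=0$; the resulting function $\widetilde\Phi=\chi\Phi$ agrees with $\Phi$ on $\Sigma_\epsilon$, is smooth on all of $\R^{2d}$, and has all derivatives of order $\geq 2$ bounded on $\Sigma_\epsilon$ — indeed on a bounded set any smooth function trivially satisfies \eqref{deriv}. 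Thus the hypotheses of Proposition \ref{pro1} hold (with $\sigma\in S^0_{0,0}$, which it is, being compactly supported), and we get, for every $N$,
\[
|K_T(y',\omega';y,\omega)|\leq C\,\mathds{1}_{\Sigma_{\epsilon/2}}(y',\omega)\,\langle\nabla_x\Phi(y',\omega)-\omega'\rangle^{-N}\langle\nabla_\eta\Phi(y',\omega)-y\rangle^{-N}.
\]

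It remains to verify the two Schur conditions \eqref{schur1} and \eqref{schur2}. For \eqref{schur1} I must bound $\int K_T(y',\omega';y,\omega)\,dy\,d\omega$ uniformly in $(y',\omega')$: integrating in $y$ over $\R^d$ costs a finite constant (for $N>d$) by the decay $\langle\nabla_\eta\Phi(y',\omega)-y\rangle^{-N}$, and then the remaining $\omega$-integral is over the \emph{bounded} set where $\mathds{1}_{\Sigma_{\epsilon/2}}(y',\omega)\neq 0$, so it too is trivially bounded. For \eqref{schur2} I integrate in $y'$ (again a finite constant for $N>d$) and then in $\omega'$ — here the decay $\langle\nabla_x\Phi(y',\omega)-\omega'\rangle^{-N}$ gives $\int_{\R^d}\langle\nabla_x\Phi(y',\omega)-\omega'\rangle^{-N}\,d\omega'\leq C$ directly by translation invariance. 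Both conditions hold, so $K_T$ is bounded on $L^\infty_{y',\omega'}L^1_{y,\omega}$ and on $L^\infty_{y,\omega}L^1_{y',\omega'}$, hence on every $L^p(\R^{2d})$, $1\le p\le\infty$. The main thing to be careful about — and really the only substantive point — is that one does \emph{not} need the non-degeneracy condition \eqref{i1tris} or the diffeomorphism hypotheses \eqref{i2tris}--\eqref{i3tris} of Proposition \ref{pro2}: compactness of the $\omega$-support (from \eqref{sigmaloc}) replaces the change-of-variables argument there, which is precisely why degenerate phases are allowed in this proposition.
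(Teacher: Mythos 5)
Your reduction to Proposition \ref{pro1} does not work, and the obstruction is precisely the one this proposition exists to handle: the singularity of the phase at $\eta=0$. Since $\Phi(x,\eta)$ is positively homogeneous of degree $1$ in $\eta$ and smooth only on $\R^d\times(\R^d\setminus\{0\})$, its derivatives of order $\geq2$ generically blow up as $\eta\to 0$ (think of $\Phi(x,\eta)=\varphi(x)|\eta|$, with $\partial_\eta^\alpha\Phi\sim|\eta|^{1-|\alpha|}$). At the same time, \eqref{sigmaloc} says $\sigma$ vanishes for $|x|+|\eta|\geq R$ but says nothing near $\eta=0$, so $\mathrm{supp}\,\sigma$ — and hence $\Sigma_\epsilon$ — contains a full neighborhood of the hyperplane $\{\eta=0\}$. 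Therefore the hypothesis \eqref{deriv} of Proposition \ref{pro1} simply fails. Your statement ``on a bounded set any smooth function trivially satisfies \eqref{deriv}'' assumes $\Phi$ is smooth on a neighborhood of that bounded set, which it is not here.

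The cutoff repair is also inconsistent: you ask for $\chi(\eta)$ to be identically $1$ on the $\eta$-projection of $\Sigma_\epsilon$ \emph{and} to vanish near $\eta=0$, but the $\eta$-projection of $\Sigma_\epsilon$ contains a ball around the origin, so no such $\chi$ exists. If instead $\chi$ is allowed to vanish near $0$, then $\widetilde\Phi=\chi\Phi$ differs from $\Phi$ on a region where $\sigma\neq0$, so the modified FIO is not the original $T$ and you have proved nothing about $T$. The paper's proof of this proposition avoids Proposition \ref{pro1} entirely: it interpolates between an $M^1$ bound obtained by differentiating under the integral and using $\|\cdot\|_{\cF L^1}\lesssim\sup_{|\alpha|\leq d+1}\|\partial^\alpha\cdot\|_{L^1}$, and an $\mathcal{M}^\infty$ bound that uses Theorem \ref{bnj} (a Beurling--Helson-type estimate giving $\sup_{x\in K}\|e^{2\pi i\Phi(x,\cdot)}\chi\|_{\cF L^1}<\infty$ for homogeneous $\Phi$) together with the algebra property of $\cF L^1$. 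That machinery is what actually controls the singularity at $\eta=0$; the Gabor-matrix/Schur argument is reserved for the high-frequency piece, where $\sigma$ vanishes near $\eta=0$ and \eqref{deriv} does hold after rescaling.
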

In order to prove Proposition
\ref{proposizione}, we use
the complex interpolation
method between the spaces
$M^1$ and
$\mathcal{M}^\infty$. Indeed,
using \eqref{interpmp},
 $$(M^1, \mathcal{M}^\infty)_{[\theta]}=M^p,\quad \frac1p=\theta,\quad 0<\theta<1,$$
 we attain the boundedness of $T$ on every $M^p$,
 $1<p<\infty$, if we prove the boundedness on $M^1$ and
 $\mathcal{M}^\infty$. The
 rest of this section is
 devoted to that.

\subsection{Boundedness on
$M^1$.} For every
$u\in
\cS(\rd)$, we have
$$ \partial^\a (Tu)(x)=\intrd e^{2\pi i \Phi(x,\o)}
\left(\sum _{\beta\leq \a}
 p_\beta(\partial^{|\beta|}\Phi(x,\eta))
 \partial_x^{\a-\beta}\sigma(x,\eta)\right)
 \hat{u}(\o)\,d\o,
$$
where
$p_\beta(\partial^{|\beta|}\Phi(x,\eta))$
is a polynomial of order
$|\beta|$ in the derivatives
of $\Phi$ with respect to $x$
of order at most $|\beta|$.\\
Hence, because of the
homogeneity of $\Phi$ and
\eqref{sigmaloc},
$$\|
\partial^\a (Tu)\|_{L^1}\leq
\left(\sum _{\beta\leq \a}
\int_{|x|\leq R}
\tilde{C}_{\beta}
\sup_{|\eta|\leq R}
\langle\eta\rangle^{|\beta|}
|\partial_x^{\a-\beta}\sigma(x,\eta)|
dx \right)
\|\hat{u}\|_{L^1}\leq C_\a
\|{u}\|_{\cF L^1}.
$$

Using the relation \eqref{loc}, the
previous estimate, and  the inclusion
$M^1\hookrightarrow\cF L^1$, the result
is easily attained:

$$\| Tu\|_{M^1}\asymp\| Tu\|_{\cF L^1}\leq C \sup_{|a|\leq d+1}\|\partial^\a (Tu)\|_{ L^1}\leq C \sup_{|a|\leq d+1} C_\a \|{u}\|_{\cF L^1}\leq \tilde{C} \|{u}\|_{M^1}.
$$
\subsection{Boundedness on
$\mathcal{M}^\infty$.} First,
we recall a slight variant of
\cite[Theorem 9]{benyi}:
\begin{theorem}\label{bnj}
Let $\Phi$ be a phase
function as in Proposition
\ref{proposizione}. Let
$\chi$ be a smooth function
satisfying $\chi(\eta)=1$ for
$|\eta|\leq R$,
$\chi(\eta)=0$ for
$|\eta|\geq2R$, for some
$R>0$. Then for every compact
subset $K\subset\rd$ there
exists a constant $C_{K}>0$
such that
\begin{equation}\label{fl1}
\sup_{x\in K} \|e^{2\pi i
\Phi(x,\cdot)}\chi\|_{\cF
L^1} < C_{K}.
\end{equation}
\end{theorem}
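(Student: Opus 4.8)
The plan is to adapt the argument of \cite[Theorem 9]{benyi}; the only genuine difficulty is that $\Phi(x,\cdot)$ is not smooth at $\eta=0$, which I would handle by a dyadic decomposition combined with the homogeneity of $\Phi$.

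First I would split off the constant: $e^{2\pi i\Phi(x,\eta)}\chi(\eta)=\chi(\eta)+\bigl(e^{2\pi i\Phi(x,\eta)}-1\bigr)\chi(\eta)$. Since $\chi\in\cC_0^\infty(\rd)\subset\cF L^1$, the first term contributes a norm independent of $x$, so it remains to estimate $\bigl(e^{2\pi i\Phi(x,\cdot)}-1\bigr)\chi$ in $\cF L^1$, uniformly for $x\in K$. Fix $\psi\in\cC_0^\infty(\rd)$ supported in $\{1/2\leq|\eta|\leq2\}$ with $\sum_{j\in\bZ}\psi(2^{-j}\eta)=1$ for $\eta\neq0$. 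Since $\chi$ is supported in $\{|\eta|\leq2R\}$ and equals $1$ on $\{|\eta|\leq R\}$, one can write $\chi=\sum_{j\leq j_0}\chi_j$ with $\chi_j:=\chi\,\psi(2^{-j}\cdot)$, $2^{j_0}\asymp R$, and $\chi_j=\psi(2^{-j}\cdot)$ for all but finitely many $j$. The finitely many blocks $\bigl(e^{2\pi i\Phi(x,\cdot)}-1\bigr)\chi_j$ with $\chi_j\neq\psi(2^{-j}\cdot)$ are smooth, supported in a fixed annulus bounded away from the origin, hence away from the singularity of $\Phi$; by compactness of $K$ their $\cC^N$ norms are bounded uniformly for $x\in K$, and therefore so are their $\cF L^1$ norms (for a function supported in a fixed compact set the $\cF L^1$ norm is controlled by any $\cC^N$ norm with $N>d$).

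The essential point is the sum over the infinitely many negative $j$ of the blocks $\bigl(e^{2\pi i\Phi(x,\cdot)}-1\bigr)\psi(2^{-j}\cdot)$. For each such $j$ I would rescale $\eta=2^j\zeta$ and use the positive homogeneity $\Phi(x,2^j\zeta)=2^j\Phi(x,\zeta)$ and the dilation invariance of the $\cF L^1$ norm to get
\[
\|\bigl(e^{2\pi i\Phi(x,\cdot)}-1\bigr)\psi(2^{-j}\cdot)\|_{\cF L^1}=\|\bigl(e^{2\pi i2^j\Phi(x,\cdot)}-1\bigr)\psi\|_{\cF L^1}.
\]
On the fixed annulus ${\rm supp}\,\psi$, the set $K\times{\rm supp}\,\psi$ is a compact subset of $\R^d\times(\R^d\setminus\{0\})$ on which $\Phi$ is smooth, so $\Phi(x,\cdot)$ and all its $\zeta$-derivatives are bounded uniformly for $x\in K$. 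Consequently, for $j\leq0$ every $\cC^N$ norm of $\bigl(e^{2\pi i2^j\Phi(x,\cdot)}-1\bigr)\psi$ is $O(2^j)$ uniformly in $x$ (a derivative landing on the exponential factor produces a factor $2^j$, and on the remaining factor one still has $e^{2\pi i2^j\Phi}-1=O(2^j)$), whence $\|\bigl(e^{2\pi i2^j\Phi(x,\cdot)}-1\bigr)\psi\|_{\cF L^1}\lesssim2^j$. Summing the resulting geometric series over $j\leq j_0$ gives $\|\bigl(e^{2\pi i\Phi(x,\cdot)}-1\bigr)\chi\|_{\cF L^1}\lesssim1$ uniformly for $x\in K$, which together with the bound on $\chi$ yields \eqref{fl1}.

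The step I expect to be the main obstacle is precisely this summation over the infinitely many negative $j$. A naive dyadic decomposition of $e^{2\pi i\Phi(x,\cdot)}\chi$ itself does not close, since after rescaling each shell merely carries the fixed function $\psi$, whose $\cF L^1$ norm does not decay in $j$. Subtracting the constant $1$ beforehand is what forces each rescaled block to gain the decisive factor $2^j=O(|\eta|)$, turning the sum into a convergent geometric series; the homogeneity of $\Phi$ is used exactly to convert the shrinking shells into one fixed annulus, on which $\Phi$ is genuinely smooth. Note that the non-degeneracy condition \eqref{nondeg} plays no role in this result.
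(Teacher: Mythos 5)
Your proof is correct and follows essentially the same route as the paper, which disposes of this theorem by citing \cite[Theorem 9]{benyi} and noting that all estimates there hold uniformly for $x$ in a compact set; the decomposition you carry out (splitting off the constant, dyadic decomposition, rescaling each shell by the degree-$1$ homogeneity, and using dilation invariance of $\cF L^1$ together with the $\cC^N\to\cF L^1$ bound on a fixed compact set) is precisely the mechanism behind that reference.
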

The proof is a
straightforward
generalization of \cite[Theorem 9]{benyi},
where the case of phases
independent of $x$ was
considered. Namely, since the
parameter $x$ varies in a
compact set $K$, all the
estimates given there hold
uniformly with respect to $x$
(more generally, Theorem
\ref{bnj} holds for phases
positively homogeneous of
order $\alpha>0$ with respect
to $\eta$, but here we are
only interested in the case
$\alpha=1$).
\par We also observe that, if
$\phi\in \cC_0^\infty(\rd)$, then
\begin{equation}\label{fiD}
    \|\phi(D) u\|_{L^\infty}\leq C \|u\|_{L^\infty},\quad \quad \forall u\in\cS(\rd),
\end{equation}
where $\phi(D)
u=\cF^{-1}(\phi\hat{u})=\hat{\phi}\ast
{u}$. This is a consequence of Young's
inequality, since $\hat{\phi}\in
L^1$.\par

We have now all the pieces in
place to prove the
boundedness on
$\mathcal{M}^\infty$ of the
FIO $T$. Since its symbol
$\sigma$ vanishes for
$|\o|\geq R$, taking $\chi$
as in Theorem \ref{bnj}, we
have $ \sigma \phas
=\sigma\phas\chi(\o) $ and,
for every $v\in \cS(\rd)$,
\begin{align*}
T v(x)&=\intrd\intrd e^{2\pi i (\Phi\phas -y\o)} \sigma\phas \chi(\o) v(y)\,d\o dy\\
&= \intrd\cF[(e^{2\pi i
(\Phi(x,\cdot)}\chi)(\sigma(x,\cdot))](y)
v(y)\, dy.
\end{align*}

If we set $K=\{x\in\R^d:\
|x|\leq R\}$, since $\cF L^1$
is an algebra under pointwise
multiplication and using the
majorization \eqref{fl1}, we
obtain
\begin{equation}\label{b1bis}
\|Tv\|_{L^\infty}\leq
\|v\|_{L^\infty}\sup_{K}(\|e^{2\pi
i \Phi(x,\cdot)}\chi\|_{\cF
L^1}\|\sigma(x,\cdot)\|_{\cF
L^1})\leq C\|v\|_{L^\infty}.
\end{equation}

Since $\sigma\phas=0$ for
$|\o|\geq R$, for every
$\phi\in \cS(\rd)$, with
$\phi(\o)\equiv 1$ for
$|\o|\leq R$, we have as well
$$
Tu= T(\phi(D)u),
$$
 so that, using the embeddings
 $L^\infty\hookrightarrow M^\infty$, and
  \eqref{b1bis} for $v=
  \phi(D)u$,
 $$
 \|Tu\|_{M^\infty}\leq  \|Tu\|_{L^\infty}=
 \|T(\phi(D)u)\|_{L^\infty}\leq C
 \|\phi(D)u\|_{L^\infty}.
  $$
 Now, choose a  function $\f$ as in \eqref{normmp}.
  Then $\f$ satisfies ${\rm supp}\,(T_m\f)\cap$ ${\rm
  supp}\,\phi\not=\emptyset$ for finitely many
  $m\in \zd$ only. Hence, the estimate \eqref{fiD} yields, for $u\in\cS(\rd)$,
\begin{align*}\|\phi(D)u\|_{L^\infty}&\lesssim
 \sum_{m\in\zd}\|\phi(D)\f(D-m) u\|_{L^\infty}\lesssim
 \sup_{m\in\zd}\|\phi(D)\f(D-m) u\|_{L^\infty}\\
&\lesssim
\sup_{m\in\zd}\|\f(D-m)
u\|_{L^\infty}\\&\asymp\|u\|_{M^\infty}.
\end{align*}
\noindent So the FIO  $T$ is bounded on
 $\mathcal{M}^\infty$.
 This concludes the proof of
 Proposition \ref{proposizione}.

\section{Oscillations at
infinity}\label{section5}

In this section we prove
Theorem \ref{maintheorem} for
an operator satisfying the
assumptions stated there and
and whose symbol $\sigma$
satisfies, in addition,
\[
\sigma(x,\eta)=0\quad {\rm
for}\ |\eta|\leq 2.
\] We first
perform a further
reduction.\par
 For every
$(x_0,\eta_0)\in \Lambda'$,
$|\eta_0|=1$, there exist an
open neighborhood
$\Omega\subset\R^d$ of $x_0$,
an open conic neighborhood
$\Gamma\subset\R^d\setminus\{0\}$
of $\eta_0$ and $\delta>0$
such that
\begin{equation}\label{i1}
|\det
\partial_{x,\eta}\Phi(x,\eta)|\geq\delta>0,\quad
\forall
(x,\eta)\in\Omega\times\Gamma,
\end{equation}
and
\begin{equation}\label{i2}
\forall x\in\Omega,\
\textrm{the map}\
\Gamma\ni\eta\mapsto
\nabla_x\Phi(x,\eta)\
\textrm{is a diffeomorphism
onto the range},
\end{equation}
\begin{equation}\label{i3}
\forall \eta\in\Gamma,\
\textrm{the map}\ \Omega\ni
x\mapsto
\nabla_\eta\Phi(x,\eta)\
\textrm{is a diffeomorphism
onto the range}.
\end{equation}
Hence, by a compactness
argument and a finite
partition of unity we can
assume that $\sigma$ itself
is supported in a cone of the
type $\Omega'\times\Gamma'$,
for some open
$\Omega'\subset\R^{d}$,
$\Gamma'\subset\R^{d}\setminus\{0\}$
conic,
$\Omega'\subset\subset\Omega$,
$\Gamma'\subset\subset\Gamma$,
with $\Phi$ satisfying the
above conditions on
$\Omega\times\Gamma$.\par
We
now prove the boundedness of
an operator $T$ of order
$m=-d/2$ on $M^1$ and on
$\mathcal{M}^\infty$. Since
it is a classical fact that
FIOs of order 0 are
continuous on
 $L^2=M^2$, (see e.g.
 \cite[page 402]{stein93}), the desired
 continuity result on $M^p$ when
 $m=-d|1/2-1/p|$,
 $1<p<\infty$,
  follows by complex
 interpolation.\par
 In detail, the interpolation step goes as follows.
 Observe first that, for
 every $s\in\R$, the operator
 $\langle D\rangle^s$ defines
 an isomorphism of $\cM^p_s$
 onto $\cM^p$. This follows easily from the
 characterization of the
  $M^p_s$ norm in
  \eqref{normmp}, after
  writing $\varphi=\tilde{\varphi}\varphi$,
  for some
  $\tilde{\varphi}\in\cC^\infty_0(\R^d)$, $\tilde{\varphi}\equiv1$
  on ${\rm supp}\,\varphi$,
combined with the fact that
   the multiplier
   $\langle\eta\rangle^s\tilde\varphi(\eta-m)\langle
   m\rangle^{-s}$
   is in $\Fur L^1$ uniformly with respect to
   $m$.\\
   Hence, the
 operator $T=T\langle D\rangle^{-s}\langle D\rangle^s$ is bounded $\cM^p_s\to \cM^p$
 if and only if
 $T\langle D\rangle^{-s}$ is
 bounded on $\cM^p$. Observe
 moreover that $T\langle
 D\rangle^{-s}$ is a FIO
 with the same phase as $T$, and symbol
 $\sigma(x,\eta)\langle\eta\rangle^{-s}$,
 which has order $m-s$.\\
 Suppose now that the
 desired result is already
 obtained for $p=1,2$.
 Take $1<p<2$ and
consider a FIO $T$ of order
 $m=-d(1/p-1/2)$. Then, taking the above remarks into
 account, $T$ extends to a
 bounded operator
 $M^1_{m+d/2}\to M^1$ and
 $M^2_m\to M^2$. Hence, the
  boundedness on $M^p$ follows
 by complex interpolation, i.e. \eqref{interpmp}, because, if
 $\theta\in(0,1)$ satisfies
 $(1-\theta)/1+\theta/2=1/p$,
 one has
 $(m+d/2)(1-\theta)+m\theta=0$. The
 proof for $2<p<\infty$ is
 similar.\par Of course, when
 in \eqref{soglia}
 there is a strict inequality, the desired result  follows from the
  equality-case, for an operator with order $m'<m$ has also order $m$. \par
Hence, from now on, we assume
$m=-d/2$ and prove the
boundedness of $T$ on $M^1$
and on $\mathcal{M}^\infty$.

\subsection{Boundedness on
$M^1$} We need the following result
(cf. \cite{grobner,sugimototomita}).
\begin{lemma}\label{le41}
Let $\chi$ be a smooth function
supported where $B_0^{-1}\leq|\eta|\leq
B_0$, for some $B_0>0$. Then, for every $u\in\cS(\rd)$,
\[
\sum_{j=1}^\infty\|\chi(2^{-j}
D)u\|_{M^1}\lesssim \|u\|_{M^1},
\]
where $\chi(2^{-j}
D)u= \cF^{-1}[\chi(2^{-j}\cdot)\hat{u}]$.
\end{lemma}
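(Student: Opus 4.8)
The plan is to use the equivalent characterization \eqref{normmp} of the $M^1$-norm and the fact that, for $u$ supported (in frequency) in a dyadic shell $|\eta|\asymp 2^j$, only boundedly many integer translates $\varphi(D-m)$ enter. First I would fix a function $\varphi\in\cC_0^\infty(\rd)$, $\varphi\geq0$, with $\sum_{m\in\zd}\varphi(\cdot-m)\equiv1$, as in \eqref{normmp}, so that $\|v\|_{M^1}\asymp\sum_{m\in\zd}\|\varphi(D-m)v\|_{L^1}$. Applying this to $v=\chi(2^{-j}D)u$, I get
\[
\|\chi(2^{-j}D)u\|_{M^1}\asymp\sum_{m\in\zd}\|\varphi(D-m)\chi(2^{-j}D)u\|_{L^1}.
\]
Now the key observation: $\varphi(\cdot-m)\chi(2^{-j}\cdot)$ is not identically zero only if $B_0^{-1}2^j\lesssim|m|\lesssim B_0 2^j$ (up to the fixed size of $\supp\varphi$), i.e.\ only for $m$ in an annulus $\mathcal{A}_j=\{m\in\zd:\ c^{-1}2^j\leq|m|\leq c\,2^j\}$ for a suitable $c>0$ depending only on $B_0$ and $\supp\varphi$. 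For such $m$ I would write $\varphi(D-m)\chi(2^{-j}D)u=\varphi(D-m)\psi(D-m)u$, where $\psi\in\cC_0^\infty(\rd)$ is chosen with $\psi\equiv1$ on a neighborhood of $\supp\varphi$; crucially, because $\chi(2^{-j}\cdot)$ is supported in the shell and $m\in\mathcal{A}_j$, one has $\varphi(\cdot-m)\chi(2^{-j}\cdot)=\varphi(\cdot-m)\chi(2^{-j}\cdot)\psi(\cdot-m)$. Then, by Young's inequality,
\[
\|\varphi(D-m)\chi(2^{-j}D)u\|_{L^1}\leq\|\cF^{-1}[\varphi(\cdot-m)\chi(2^{-j}\cdot)]\|_{L^1}\,\|\psi(D-m)u\|_{L^1}.
\]

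The second factor is controlled by the $M^1$-norm of $u$, since $\sum_{m\in\zd}\|\psi(D-m)u\|_{L^1}\lesssim\|u\|_{M^1}$ (this follows from \eqref{normmp} applied with a finite sum of shifted copies of $\varphi$ dominating $\psi$, or equivalently from the standard fact that a finitely-overlapping smooth dyadic/unit partition gives an equivalent $M^1$-norm). For the first factor, I would dilate: $\cF^{-1}[\varphi(\cdot-m)\chi(2^{-j}\cdot)](x)=e^{2\pi i m x}\,\cF^{-1}[\varphi\,\chi(2^{-j}(\cdot+m))](x)$, and since $m\in\mathcal{A}_j$, the function $\eta\mapsto\chi(2^{-j}(\eta+m))$ restricted to $\supp\varphi$ ranges over a bounded family in $\cC_0^\infty$ (all its derivatives are $O(2^{-j|\alpha|})=O(1)$), so that $\|\cF^{-1}[\varphi\,\chi(2^{-j}(\cdot+m))]\|_{L^1}\leq C$ uniformly in $j$ and $m$. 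Hence
\[
\|\chi(2^{-j}D)u\|_{M^1}\lesssim\sum_{m\in\mathcal{A}_j}\|\psi(D-m)u\|_{L^1}.
\]

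Finally I would sum over $j\geq1$ and swap the order of summation: $\sum_{j=1}^\infty\sum_{m\in\mathcal{A}_j}\|\psi(D-m)u\|_{L^1}=\sum_{m\in\zd}\#\{j\geq1:m\in\mathcal{A}_j\}\,\|\psi(D-m)u\|_{L^1}$. The point is that each $m$ lies in $\mathcal{A}_j$ for at most $O(1)$ values of $j$ (since the annuli $\mathcal{A}_j$ have bounded overlap: $|m|\asymp 2^j$ pins down $j$ up to an additive constant), so the multiplicity is uniformly bounded. Therefore the double sum is $\lesssim\sum_{m\in\zd}\|\psi(D-m)u\|_{L^1}\lesssim\|u\|_{M^1}$, which is the claim. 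The only mildly delicate point is the uniform $L^1$-bound on $\cF^{-1}[\varphi\,\chi(2^{-j}(\cdot+m))]$, i.e.\ verifying that the dilated-and-translated bump $\chi(2^{-j}(\cdot+m))$ stays in a bounded subset of $\cC^\infty$ on $\supp\varphi$ for $m\in\mathcal{A}_j$; this is where one uses that $\chi$ is supported precisely in the shell $B_0^{-1}\leq|\eta|\leq B_0$, so that the relevant $\eta$'s satisfy $|\eta+m|\asymp 2^j$ and the chain rule produces the harmless factors $2^{-j|\alpha|}$.
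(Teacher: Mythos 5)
Your proof is correct and follows essentially the same strategy as the paper's: the characterization \eqref{normmp}, the bounded overlap between the dyadic shells $\{|\eta|\asymp 2^j\}$ and the unit cubes $\supp\varphi(\cdot-m)$, and a Young-type $L^1$ estimate for the dilated multiplier. The only difference is cosmetic: the paper first commutes $\varphi(D-m)$ and $\chi(2^{-j}D)$ and uses the exact scaling $\|\cF^{-1}(\chi(2^{-j}\cdot))\|_{L^1}=\|\cF^{-1}\chi\|_{L^1}$, which lets one avoid introducing the auxiliary function $\psi$ and the bounded-in-$\cC^\infty$ argument for $\varphi\,\chi(2^{-j}(\cdot+m))$.
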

\begin{proof}
Let $u\in\cS(\rd)$.
By using the characterization
of the $M^1$-norm in \eqref{normmp},
we have
\begin{align}
\sum_{j=1}^\infty\|\chi(2^{-j}
D)u\|_{M^1}
&\asymp\sum_{j=1}^\infty\sum_{m\in\mathbb{Z}^d}
\|\f(D-m)\chi(2^{-j}D)u\|_{L^1}\\
&=\sum_{m\in\mathbb{Z}^d}\sum_{j=1}^\infty
\|\chi(2^{-j}D)\f(D-m)u\|_{L^1}\\
&\lesssim\sum_{m\in\mathbb{Z}^d}\sup_{j\geq1}
\|\chi(2^{-j}D)\f(D-m)u\|_{L^1}.
\end{align}
In the last inequality we
used the fact that, for any
$m$, the number of indices
$j\geq1$ for which ${\rm
supp}\
\chi(2^{-j}\,\cdot)\cap {\rm
supp}\
\f(\cdot-m)\not=\emptyset$
is uniformly bounded with
respect to $m$.\\
 The Fourier multiplier
 $\chi(2^{-j} D)$ is bounded
 on $L^1$, uniformly with
 respect to $j$. Indeed, for every $f\in\cS$,
 \begin{align*} \|\chi(2^{-j} D)f\|_{L^1}&=\|   \cF^{-1}(\chi(2^{-j} \cdot))\ast f\|_{L^1}\leq\|                \cF^{-1}(\chi(2^{-j} \cdot))\|_{L^1}\|f\|_{L^1}\\
 &=2^{jd} \|    \cF^{-1}(\chi)(2^{j} \cdot)\|_{L^1}\|f\|_{L^1}= \|              \cF^{-1}(\chi)( \cdot)\|_{L^1}\|f\|_{L^1}.
 \end{align*}
 Hence, $\|\chi(2^{-j}D)\f(D-m)u\|_{L^1}\lesssim \|\f(D-m)u\|_{L^1}$. Finally, using \eqref{normmp},
 \[\sum_{j=1}^\infty\|\chi(2^{-j}
D)u\|_{M^1}
\lesssim\sum_{m\in\mathbb{Z}^d}
\|\f(D-m)u\|_{L^1}\asymp
\|u\|_{M^1}.
\]
\end{proof}\\
 Consider now the usual
Littlewood-Paley
decomposition of the
frequency domain. Namely,
 fix a smooth function $\psi_0(\eta)$
  such that $\psi_0(\eta)=1$
  for $|\eta|\leq1$ and
  $\psi_0(\eta)=0$ for
  $|\eta|\geq2$. Set
  $\psi(\eta)=\psi_0(\eta)-\psi_0(2\eta)$,
  $\psi_j(\eta)=\psi(2^{-j}\eta)$, $j\geq1$.
  Then
  \[
  1=\sum_{j=0}^\infty\psi_j(\eta),\quad
\forall\eta\in\R^d.
 \]
  Notice
that, if $j\geq 1$, $\psi_j$ is supported
where $2^{j-1}\leq|\eta|\leq
2^{j+1}$. Since $\sigma\phas=0$, for $|\o|\geq 2$, we can write
\[
T=\sum_{j\geq1} T^{(j)},
\]
where $T^{(j)}$ has symbol
$\sigma_j(x,\eta):=\sigma(x,\eta)\psi_j(\eta)$.
Moreover, we observe that
\[
T^{(j)}=U_{2^{j/2}}\tilde{T}^{(j)}
U_{2^{-j/2}},
\]
where $\tilde{T}^{(j)}$ is
the FIO with phase
\begin{equation}\label{i100bis}
{\Phi}_j(x,\eta):=\Phi(2^{-j/2}x,2^{j/2}\eta)=2^{j/2}\Phi(2^{-j/2}x,\eta),
\end{equation}
and symbol
\[
\tilde{\sigma}_j(x,\eta):=
\sigma_j(2^{-j/2}x,2^{j/2}\eta),
\]
and $U_\lambda f(y)=f(\lambda
y)$, $\lambda>0$, is the
dilation operator. From Theorem \ref{dilprop} we have
\begin{equation}\label{di1}
\|U_{\lambda}f\|_{M^1}\lesssim
\|f\|_{M^1},\quad
\lambda\geq1,
\end{equation}
and
\begin{equation}\label{di2}
\|U_{\lambda}f\|_{M^1}\lesssim
\lambda^{-d}\|f\|_{M^1},\quad
0<\lambda\leq1.
\end{equation}
Assume for a moment that
\begin{equation}\label{intermedia}
\|\tilde{T}^{(j)}
u\|_{M^1}\lesssim
2^{-jd/2}\|u\|_{M^1}.
\end{equation}
Then, using \eqref{di1} and
\eqref{di2} we obtain
\[
\|T^{(j)} u\|_{M^1}\leq
2^{jd/2}2^{-jd/2}\|u\|_{M^1}=\|u\|_{M^1}.
\]
Actually, for the
frequency localization of
$T^{(j)}$,  the following  finer
estimate holds:
\[
\|T^{(j)}
u\|_{M^1}=\|T^{(j)}(\chi(2^{-j}D)
u)\|_{M^1}\leq\|\chi(2^{-j}D)u\|_{M^1},
\]
where $\chi$ is a smooth
function satisfying
$\chi(\eta)=1$ for
$1/2\leq |\eta|\leq2$ and
$\chi(\eta)=0$ for
$|\eta|\leq1/4$ and $|\eta|\geq4$ (so that
$\chi\psi=\psi$). Summing on
$j$ this last estimate with the aid of
Lemma \ref{le41} we obtain
\[
\|Tu\|_{M^1}\lesssim\|u\|_{M^1},
\]
which is the desired
estimate.\par It remains to
prove \eqref{intermedia}.
This follows from Proposition
\ref{pro2} applied to the
operator
$2^{jd/2}\tilde{T}^{(j)}$.
Indeed, it is easy to see
that the hypotheses are
satisfied uniformly with
respect to $j$. Precisely, we
observe that, for every $j\geq 1$,
\[
|\partial^\alpha_x\partial^\beta_\eta
\tilde{\sigma}_j(x,\eta)|\lesssim
2^{-j\frac{d}{2}-j\frac{|\alpha|+|\beta|}{2}},
\]
and
$\tilde{\sigma}_j(x,\eta)$ is
supported where
$2^{j/2-1}\leq|\eta|\leq
2^{j/2+1}$,
$x\in{\Omega}'_j:=\{2^{j/2}x,\
x\in\Omega'\}$,
$\eta\in\Gamma'$. Moreover,
after setting
${\Omega}_j:=\{2^{j/2}x,\
x\in\Omega\}$, we see that
\begin{equation}\label{i10bis}
|\partial^\alpha_x\partial^\beta_\eta
{\Phi}_j(x,\eta)|\lesssim
2^{j\left(1-\frac{|\alpha|+|\beta|}{2}\right)},
\end{equation}
for $(x,\eta)$ in the set
${\Omega}_j\times\Gamma,
2^{j/2-2}\leq|\eta|\leq
2^{j/2+2}$, which contains an
$\epsilon$-neighborhood of
${\rm
supp}\,\tilde{\sigma}_j$,
with $\epsilon$ independent
of $j$. Finally \eqref{i1},
\eqref{i2}, \eqref{i3} give
\begin{equation}\label{i1bis}
\left|{\rm det}\,
\left(\frac{\partial^2\Phi_j}{\partial
x_i\partial \eta_l}\Big|_{
(x,\eta)}\right)\right|\geq\delta>0,\quad
\forall
(x,\eta)\in{\Omega}_j\times\Gamma,
\end{equation}
and
\begin{equation}\label{i2bis}
\forall x\in{\Omega}_j,\
\textrm{the map}\
\Gamma\ni\eta\mapsto
\nabla_x{\Phi}_j(x,\eta)\
\textrm{is a diffeomorphism
onto the range},
\end{equation}
\begin{equation}\label{i3bis}
\forall \eta\in\Gamma,\
\textrm{the map}\
{\Omega}_j\ni x\mapsto
\nabla_\eta{\Phi}_j(x,\eta)\
\textrm{is a diffeomorphism
onto the range}.
\end{equation}
Hence Proposition \ref{pro2}
 applies and gives
 \eqref{intermedia}.
 \subsection{Boundedness on
 $\mathcal{M}^\infty$}
 We need the following result (cf. \cite{grobner,sugimototomita}).
 \begin{lemma}\label{lemma2}
For $k\geq0$, let $f_k\in\cS(\rd)$
satisfy ${\rm
supp}\,\hat{f}_0\subset
B_2(0)$ and
\[
{\rm
\supp}\,\hat{f}_k\subset\{\eta\in\R^d:\
2^{k-1}\leq|\eta|\leq
2^{k+1}\},\quad k\geq1.
\]
Then, if the sequence $f_k$ is bounded in $M^\infty§(\rd)$, the series
$\sum_{k=0}^\infty {f}_k$
converges in
${M}^\infty(\R^d)$ and
\begin{equation}\label{b0}
\|\sum_{k=0}^\infty
f_k\|_{M^\infty}\lesssim\sup_{k\geq0}\|f_k\|_{M^\infty}.
\end{equation}
 \end{lemma}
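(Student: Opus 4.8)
The plan is to reduce the statement to an $M^1$-estimate via the duality $(M^1)'=M^\infty$ recalled in Section~\ref{section2}, the summation over $k$ being handled by Lemma~\ref{le41}. First I would fix frequency cut-offs adapted to the hypotheses on the $\hat f_k$: choose a real $\tilde\psi_0\in\cC^\infty_0(\rd)$ with $\tilde\psi_0\equiv1$ on $\{|\eta|\le2\}$, and a real $\tilde\psi\in\cC^\infty_0(\rd)$ with $\tilde\psi\equiv1$ on $\{1/2\le|\eta|\le2\}$ and $\mathrm{supp}\,\tilde\psi\subset\{1/4\le|\eta|\le4\}$; set $\tilde\psi_k(\eta)=\tilde\psi(2^{-k}\eta)$ for $k\ge1$. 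By the support assumptions, $\hat f_k=\tilde\psi_k\hat f_k$, hence $f_k=\tilde\psi_k(D)f_k$ for every $k\ge0$.

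Next I would test the series against an arbitrary $g\in\cS(\rd)$, which is dense in $M^1$. Since each $\tilde\psi_k$ is real, the Fourier multiplier $\tilde\psi_k(D)$ is self-adjoint for the pairing $\langle\cdot,\cdot\rangle$, so
\[
|\langle f_k,g\rangle|=|\langle\tilde\psi_k(D)f_k,g\rangle|=|\langle f_k,\tilde\psi_k(D)g\rangle|\le\|f_k\|_{M^\infty}\,\|\tilde\psi_k(D)g\|_{M^1},
\]
by the $M^\infty$--$M^1$ duality estimate (equivalently $|\langle f_k,g\rangle|\le\|V_\gamma f_k\|_{L^\infty}\|V_\gamma g\|_{L^1}$ for a unit-norm window $\gamma$). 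Summing over $k$, the term $k=0$ is $\lesssim\|g\|_{M^1}$ because $\tilde\psi_0(D)$ is bounded on $M^1$ (by the same elementary argument — uniform $L^1$-boundedness combined with \eqref{normmp} — used for $\chi(2^{-j}D)$ in the proof of Lemma~\ref{le41}), while Lemma~\ref{le41} applied with $\chi=\tilde\psi$ gives $\sum_{k\ge1}\|\tilde\psi_k(D)g\|_{M^1}\lesssim\|g\|_{M^1}$. Hence
\[
\sum_{k=0}^\infty|\langle f_k,g\rangle|\ \lesssim\ \Big(\sup_{k\ge0}\|f_k\|_{M^\infty}\Big)\|g\|_{M^1},\qquad\forall\,g\in\cS(\rd).
\]

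Finally I would conclude as follows. The last display says that $g\mapsto\sum_k\langle f_k,g\rangle$ is a well-defined linear functional on $\cS(\rd)$, continuous there for the $M^1$-norm with norm $\le C\sup_k\|f_k\|_{M^\infty}$; by density it extends to $M^1$, so by $(M^1)'=M^\infty$ it is represented by a unique $F\in M^\infty$ with $\|F\|_{M^\infty}\lesssim\sup_k\|f_k\|_{M^\infty}$. Moreover the partial sums $S_N=\sum_{k=0}^Nf_k$ are uniformly bounded in $M^\infty$ — this also follows directly from the characterization \eqref{normmp}, since for each $m\in\zd$ one has $\varphi(D-m)f_k\ne0$ for only $O(1)$ values of $k$ (the annuli being lacunary), so $\|\varphi(D-m)S_N\|_{L^\infty}\lesssim\sup_k\|f_k\|_{M^\infty}$ — and $\langle S_N,g\rangle\to\langle F,g\rangle$ for $g\in\cS(\rd)$, hence for all $g\in M^1$. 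Thus $S_N\to F$ in the weak-$*$ topology $\sigma(M^\infty,M^1)$, which is the sense in which $\sum_kf_k$ ``converges in $M^\infty$'' here (norm convergence is impossible in general, e.g.\ for $f_k=M_{\eta_k}\phi$ with a fixed bump $\phi$ and $|\eta_k|\asymp2^k$, since then $\|f_N\|_{M^\infty}\not\to0$). The only step that is not completely routine is the uniform summability $\sum_k\|\tilde\psi_k(D)g\|_{M^1}\lesssim\|g\|_{M^1}$, and that is precisely Lemma~\ref{le41}; so I expect no genuine obstacle beyond assembling these ingredients.
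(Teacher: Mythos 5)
Your proof is correct, but it takes a genuinely different route from the paper's. The paper argues directly on the STFT side: choosing a window $g$ with $\mathrm{supp}\,\hat g\subset B_{1/2}(0)$, it uses $V_g(f_k)(x,\omega)=(\hat f_k*M_{-x}\hat g)(\omega)$ to see that $V_g(f_k)$ inherits lacunary support in the $\omega$ variable, so at each $(x,\omega)$ at most four terms of $\sum_k V_g(f_k)$ are nonzero; this immediately gives $\|\sum_k f_k\|_{M^\infty}\asymp\|\sum_k V_g(f_k)\|_{L^\infty}\leq 4\sup_k\|V_g f_k\|_{L^\infty}\asymp\sup_k\|f_k\|_{M^\infty}$. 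Your argument instead dualizes against $M^1$ via the self-adjoint multipliers $\tilde\psi_k(D)$ and then invokes Lemma~\ref{le41}. Both are correct; the paper's is shorter and self-contained, while yours has the appeal of reusing Lemma~\ref{le41} as the single almost-orthogonality input for the $M^1$ case and the $\mathcal{M}^\infty$ case alike. You also flag a genuine subtlety that the paper's one-line ``the convergence of the series $\ldots$ is straightforward'' glosses over: with the hypotheses as stated the series cannot converge in $M^\infty$-norm in general (your $f_k=M_{\eta_k}\phi$ example, where $\|f_k\|_{M^\infty}=\|\phi\|_{M^\infty}$ is a fixed positive constant, shows the terms do not tend to $0$), and the correct reading is weak-$*$ convergence in $\sigma(M^\infty,M^1)$, which is exactly what your duality argument delivers. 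In the application in Section~\ref{section5} this issue is invisible because there $\sum_k\psi_k(D)Tu$ is the Littlewood--Paley decomposition of a fixed Schwartz function $Tu$, so the partial sums converge to $Tu$ in $\mathcal{S}$ and hence in every $M^p$; but as a statement about general sequences $f_k$ your weak-$*$ formulation is the right one.
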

 \begin{proof}
The convergence of the series
$\sum_{k=0}^\infty {f}_k$ in
${M}^\infty(\R^d)$ is straightforward.\\
We then prove the desired estimate.
 Choose a window
function $g$ with ${\rm
supp}\,\hat{g}\subset
B_{1/2}(0)$. We can write
\[
V_g(f_k)(x,\omega)=(\hat{f}_k\ast
M_{-x}\hat{g})(\omega).
\]
Hence, ${\rm
\supp}\,V_g(f_0)\subset
B_{5/2}(0)\subset
B_{2^2}(0)$, and
\begin{align*}
{\rm
\supp}\,V_g({f}_k)&\subset\{\eta\in\R^d:\
2^{k-1}-2^{-1}\leq|\eta|\leq
2^{k+1}+2^{-1}\}\\
&\subset\{\eta\in\R^d:\
2^{k-2}\leq|\eta|\leq
2^{k+2}\},
\end{align*}
for $k\geq1$. Hence, for each
$(x,\omega)$, there are at most four
nonzero terms in the sum
$\sum_{k=0}^\infty V_g(f_k)(x,\omega)$.
Using this fact we obtain
\begin{align}
\|\sum_{k=0}^\infty
{f}_k\|_{M^\infty}&\asymp\|\sum_{k=0}^\infty
V_g({f}_k)\|_{L^\infty}\leq\|\sum_{k=0}^\infty
|V_g(f_k)|\|_{L^\infty}\nonumber\\
&\leq
4\|\sup_{k\geq0}|V_g(f_k)|\|_{L^\infty}=4\sup_{k\geq0}
\|V_g(f_k)\|_{L^\infty}\asymp\sup_{k\geq0}
\|f_k\|_{M^\infty}.\nonumber
\end{align}
\end{proof}\\
We now proceed in the proof
of the boundedness of $T$ (of
order $m=-d/2$) on
$\mathcal{M}^\infty$. The
Littlewood-Paley
decomposition of the
frequency domain, introduced
at the beginning of this
section, and Lemma
\ref{lemma2} yield
\begin{align}
\|Tu\|_{M^\infty}&=\|\sum_{k\geq0}\psi_k(D)Tu\|_{M^\infty}\nonumber\\
&\lesssim\sup_{k\geq0}\|\psi_k(D)Tu\|_{M^\infty}\nonumber\\
&\leq\sup_{k\geq0}\sum_{j=1}^\infty\|\psi_k(D)T^{(j)}
u\|_{M^\infty},\label{continuazione}
\end{align}
where, as before,
$T=\sum_{j=1}^\infty
T^{(j)}$, with $T^{(j)}$
having symbol
$\sigma_j(x,\eta):=\sigma(x,\eta)\psi_j(\eta)$
and the same phase $\Phi$.
Notice that the sequence of
symbols $\sigma_j(x,\eta)$ is
bounded in $S^{-d/2}_{1,0}$,
whereas the sequence of
symbols $\psi_k(\eta)$ is
bounded in $S^0_{1,0}$.
\par Applying  Theorem
\ref{composition} to each product
$\psi_k(D)T^{(j)}$,  we have
\[
\psi_k(D)T^{(j)}=S_{k,j}+R_{k,j},
\]
where $S_{k,j}$ are FIOs with the same
phase $\Phi$ and symbols $\sigma_{k,j}$
belonging to bounded subset of
$S^{-d/2}_{1,0}$, supported in
\begin{equation}\label{b1}
\{(x,\eta)\in\Omega\times\Gamma:\
|\nabla_x\Phi(x,\eta)|\leq2,
2^{j-1}\leq|\eta|\leq
2^{j+1}\},\quad {\rm if}\
k=0,
\end{equation}
and in
\begin{equation}\label{b2}
\{(x,\eta)\in\Omega\times\Gamma:\
2^{k-1}\leq|\nabla_x\Phi(x,\eta)|\leq2^{k+1},
2^{j-1}\leq|\eta|\leq
2^{j+1}\},\quad {\rm if}\
k\geq1.
\end{equation}
The operators  $R_{k,j}$ are
smoothing operators whose
symbols $r_{k,j}$ are
 in a bounded subset of $\mathcal{S}(\R^{2d})$,
 supported where
$2^{j-1}\leq|\eta|\leq 2^{j+1}$.\par
Observe that, by the Euler's identity and \eqref{i1},
\[
|\nabla_x\Phi(x,\eta)|=
|\partial^2_{x,\eta}\Phi(x,\eta)\eta|\asymp|\eta|,\quad
\forall (x,\eta)\in\Omega\times\Gamma.
\]
Inserting this equivalence in
\eqref{b1} and \eqref{b2}, we
obtain that there exists
$N_0>0$ such that
$\sigma_{k,j}$ vanishes
identically if $|j-k|>N_0$.
Whence, the right-hand side
in \eqref{continuazione} is
seen to be
\[
\leq\sup_{k\geq0}\sum_{j\geq1:
|j-k|\leq N_0}\|S_{k,j}u
\|_{M^\infty}+\sup_{k\geq0}
\sum_{j=1}^\infty
\|R_{k,j}u\|_{M^\infty}.\]
This expression will be
dominated by the $M^\infty$
norm of $u$ if we prove
that\footnote{Of course, as
always we mean that the
constant which is implicit in
the notation $\lesssim$ is
independent of $k,j$.}
\begin{equation}\label{parteprincipale}
\|S_{k,j}u\|_{M^\infty}\lesssim\|u\|_{M^\infty},
\end{equation}
and
\begin{equation}\label{resto}
\|R_{k,j}u\|_{M^\infty}\lesssim
2^{-j}\|u\|_{M^\infty}.
\end{equation}
This last estimate is easy to
obtain. Namely, observe that
the symbols
$2^{j}r_{k,j}(x,\eta)$ are
still in a bounded subset of
$\mathcal{S}(\R^{2d})$: since $|\o|\asymp 2^{j}$, on
the support of $r_{k,j}$, for every
$\alpha\in\Z_+$, $N\geq0$,
\[
|\partial^\alpha_x\partial^\beta_\eta
r_{k,j}(x,\eta)|\lesssim_{\alpha,\beta,N}
(1+|x|+|\eta|)^{-N}\lesssim
2^{-j}(1+|x|+|\eta|)^{-N+1}.
\]
 This implies that
the corresponding operators
$2^jR_{k,j}$ are uniformly bounded on
$\mathcal{M}^\infty$, i.e. \eqref{resto}.\par It
remains to prove
\eqref{parteprincipale}. To this end,
we make use of the dilation operator
$U_\lambda$, as before. Precisely, we
recall from Theorem \ref{dilprop}
that
\begin{equation}\label{di1bis}
\|U_{\lambda}f\|_{M^\infty}\lesssim
\|f\|_{M^\infty},\quad
\lambda\geq1,
\end{equation}
and
\begin{equation}\label{di2bis}
\|U_{\lambda}f\|_{M^\infty}\lesssim
\lambda^{-d}\|f\|_{M^\infty},\quad
0<\lambda\leq1.
\end{equation}
Write
\[
S_{k,j}:=U_{2^{j/2}}
\tilde{S}_{k,j} U_{2^{-j/2}},
\]
where $\tilde{S}_{k,j}$ is
the FIO with phase
${\Phi}_j(x,\eta)$ defined in
\eqref{i100bis}, and symbol
\[
\tilde{\sigma}_{k,j}(x,\eta):=
\sigma_{k,j}(2^{-j/2}x,2^{j/2}
\eta).
\]
Hence, taking into account
\eqref{di1bis}, \eqref{di2bis}, we see
that \eqref{parteprincipale} will
follow from
\[
\|\tilde{S}_{k,j}
u\|_{M^\infty}\lesssim
2^{-jd/2}\|u\|_{M^\infty}.
\]
This last estimate is a
consequence of Proposition
\ref{pro2} applied to
$2^{jd/2}\tilde{S}_{k,j}$.
Indeed, using the notation
above, namely
${\Omega}'_j:=\{2^{j/2}x,\
x\in\Omega'\}$,
${\Omega}_j:=\{2^{j/2}x,\
x\in\Omega\}$, we already
observed that \eqref{i10bis},
\eqref{i1bis}, \eqref{i2bis},
\eqref{i3bis} hold. Moreover,
$\tilde{\sigma}_{k,j}(x,\eta)$
is supported where
$2^{j/2-1}\leq|\eta|\leq
2^{j/2+1}$,
$x\in{\Omega}'_j$,
$\eta\in\Gamma'$, and
satisfies
\[
|\partial^\alpha_x\partial^\beta_\eta
\tilde{\sigma}_{k,j}(x,\eta)|\lesssim
2^{-j\frac{d}{2}-j\frac{|\alpha|+|\beta|}{2}}.
\]
This concludes the proof of Theorem \ref{maintheorem}.

\section{Sharpness of the results}\label{sharp}
In this section we prove the
sharpness of Theorems
\ref{maintheorem2} and
\ref{maintheorem}. Precisely,
for every $m>\displaystyle -d
\left|\frac12-\frac1p\right|$,
$1\leq p\leq\infty$, there
are FIOs of the type
\eqref{FIO}, satisfying the
assumptions in the
Introduction, which do not
extend to bounded operators
on $\Fur L^p_{\rm comp}$,
$1\leq p<\infty$, nor on the
closure of
$\mathcal{C}^\infty_0(\R^d)$
in $\Fur L^\infty_{\rm
comp}$, and therefore do not
extend to bounded operators
on $M^p$, $1\leq p<\infty$,
nor on
$\mathcal{M}^\infty$.\par The
key idea is that the
composition operator
$f\rightarrow f\circ \f$ with
$\f :\R\to\R $ being a
non-linear $C^1$ change of
variables, is unbounded on
the space $\cF L^p(\R)_{\rm
loc}$
\cite{beurling53,lebedev94}.\par
\subsection{Some auxiliary results}\ \\ We need to recall
the van der Corput Lemma
(see, e.g., \cite[Proposition
2, page 332]{stein93}).
 \begin{lemma}\label{vandercorput} Suppose $\phi$ is real-valued and smooth in $(a,b)\subset \R$, and that $|\phi^{(k)}(t)|\geq 1$, for all $t\in (a,b)$ and for some $k\geq 2$. Then, for every $\lambda>0$,
 \begin{equation}\label{vanderc}
 \left |\int_a^b e^{i\lambda \phi(t)}\,dt\right|\leq c_k \lambda^{-1/k},
 \end{equation}
 where the bound $c_k$ is independent of $\phi$ and $\lambda$.
 \end{lemma}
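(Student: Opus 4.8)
The final statement to be proved is the van der Corput Lemma (Lemma \ref{vandercorput}). Here is my plan.

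\medskip

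The plan is to argue by induction on $k\geq 2$. For the base case $k=2$, I would first reduce to the situation where $\phi'$ is monotone on $(a,b)$: since $|\phi''|\geq 1$ throughout, $\phi'$ is strictly monotone, so $\phi'$ vanishes at most once, say at a point $t_0$ (if it does not vanish at all, the argument below only simplifies). Splitting the interval at $t_0$, it suffices to treat an interval on which $\phi'$ has constant sign and $|\phi'|$ is increasing away from the endpoint nearest $t_0$. On the subinterval where $|\phi'(t)|\leq \delta$ for a parameter $\delta>0$ to be chosen, I would estimate the integral trivially by its length, which is at most $\delta$ because $|\phi''|\geq 1$ forces $|\phi'|$ to grow at unit rate. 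On the complementary subinterval, where $|\phi'(t)|\geq\delta$, I would integrate by parts once, writing $e^{i\lambda\phi} = \frac{1}{i\lambda\phi'}\frac{d}{dt}e^{i\lambda\phi}$; the boundary terms are $O(1/(\lambda\delta))$, and the integral term is controlled using that $1/\phi'$ has bounded variation on this piece (its derivative is $-\phi''/(\phi')^2$, which is of one sign because $\phi'$ is monotone, so the total variation telescopes to a bound of order $1/(\lambda\delta)$). Adding the two contributions gives a bound $\lesssim \delta + 1/(\lambda\delta)$, and optimizing in $\delta$ by taking $\delta=\lambda^{-1/2}$ yields the estimate $c_2\lambda^{-1/2}$, with $c_2$ absolute.

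\medskip

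For the inductive step, assume the bound holds for $k-1$ with an absolute constant $c_{k-1}$, and suppose $|\phi^{(k)}(t)|\geq 1$ on $(a,b)$. Then $\phi^{(k-1)}$ is strictly monotone, hence vanishes at most once, at some $t_0$. On the set where $|\phi^{(k-1)}(t)|\leq\delta$, which by the mean-value argument (using $|\phi^{(k)}|\geq 1$) is an interval of length at most $2\delta$ around $t_0$, estimate the integral trivially by $2\delta$. On each of the (at most two) complementary intervals $I$, we have $|\phi^{(k-1)}(t)|\geq\delta$, so $\psi:=\phi/\delta$ satisfies $|\psi^{(k-1)}|\geq 1$ on $I$; the induction hypothesis applied with the frequency $\lambda\delta$ gives $\bigl|\int_I e^{i\lambda\phi}\,dt\bigr| = \bigl|\int_I e^{i(\lambda\delta)\psi}\,dt\bigr|\leq c_{k-1}(\lambda\delta)^{-1/(k-1)}$. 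Summing, we get $\bigl|\int_a^b e^{i\lambda\phi}\,dt\bigr|\lesssim \delta + (\lambda\delta)^{-1/(k-1)}$, and choosing $\delta=\lambda^{-1/k}$ balances the two terms (since $(\lambda\cdot\lambda^{-1/k})^{-1/(k-1)} = \lambda^{-(1-1/k)/(k-1)} = \lambda^{-1/k}$), producing the claimed $c_k\lambda^{-1/k}$ with $c_k$ depending only on $k$ and $c_{k-1}$, hence ultimately only on $k$.

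\medskip

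The only genuinely delicate point is the integration-by-parts estimate in the base case $k=2$: one must be careful that $1/\phi'$ is of bounded variation on the region $\{|\phi'|\geq\delta\}$ and that this variation is $O(1/\delta)$, which is exactly where the monotonicity of $\phi'$ (a consequence of $|\phi''|\geq 1$) is used — without it, the $\int |(1/\phi')'|\,dt$ term could not be telescoped. Everything else is bookkeeping: splitting intervals, the trivial length bound near the critical point using $|\phi^{(k)}|\geq 1$, and the rescaling $\phi\mapsto\phi/\delta$ that feeds the induction. The independence of $c_k$ from $\phi$ and $\lambda$ is automatic because every estimate used only the lower bounds on the derivatives and never any upper bound.
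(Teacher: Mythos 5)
Your proof is correct, and it is the standard textbook argument: note that the paper does not prove Lemma~\ref{vandercorput} itself but simply cites \cite[Proposition 2, page 332]{stein93}, whose proof is essentially identical to yours — induction on $k$, splitting the interval into a $\delta$-neighborhood of the unique zero of $\phi^{(k-1)}$ (estimated trivially by its length, using $|\phi^{(k)}|\geq 1$) and its complement (where one rescales and applies the inductive hypothesis at frequency $\lambda\delta$), then optimizing $\delta\sim\lambda^{-1/k}$. The only cosmetic difference is that Stein's version starts the induction at $k=1$ under the extra hypothesis that $\phi'$ is monotone and obtains $k=2$ as a corollary, whereas you take $k=2$ as the base case and inline the integration-by-parts estimate directly; since the lemma as stated in the paper only asserts the bound for $k\geq 2$, your rearrangement is entirely appropriate. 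Every step — the monotonicity of $\phi^{(k-1)}$, the length bound on $\{|\phi^{(k-1)}|\leq\delta\}$, the bounded-variation control of $1/\phi'$ in the $k=2$ case, and the exponent bookkeeping $(\lambda\cdot\lambda^{-1/k})^{-1/(k-1)}=\lambda^{-1/k}$ — is sound, and the $\phi$- and $\lambda$-independence of $c_k$ is correctly accounted for.
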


\begin{proposition}\label{prop1} Let $\f: \R\to \R$ be a
$\cC^{\infty}$
diffeomorphism, whose
restriction to the interval
$(0,1)$ is a non-linear
 diffeomorphism on $(0,1)$. This means that there
  exists an interval $I\subset (0,1)$ such that
  $|\f^{''}(t)|\geq \rho>0$, for all  $t\in I$.
  Let $\chi\in\cC^{\infty}_0(\R)$, $\chi\geq 0$,
   with supp $\chi\subset (0,1)$ and
    $|({\rm supp} \chi)\cap \f(I)|>0$. Then, if we set
\begin{equation}\label{fn}
f_n (t)= \chi(t) e^{2\pi i nt},\quad
n\in\N,
\end{equation}
for $1\leq p\leq 2$, we have
\begin{equation}\label{fnest}
\|f_n\circ\f\|_{\cF L^p} \geq
c(p,\f,\chi) \, n^{1/p-1/2},\quad
\quad\forall n\in\N.
\end{equation}
\end{proposition}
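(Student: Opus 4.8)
The plan is to obtain the lower bound for $\|f_n\circ\f\|_{\cF L^p}$ by first passing to the case $p=2$ via a trivial observation, and then improving it for $1\le p<2$ by localizing the Fourier transform to a small region where it is large. For $p=2$, note that $\|f_n\circ\f\|_{\cF L^2}=\|f_n\circ\f\|_{L^2}$, and since $\f$ is a fixed diffeomorphism the change of variables $s=\f(t)$ gives $\|f_n\circ\f\|_{L^2}\asymp \|f_n\|_{L^2}=\|\chi\|_{L^2}$, a constant independent of $n$; this matches $n^{1/p-1/2}=n^0$. So the content is entirely in the range $1\le p<2$, where $1/p-1/2>0$ and we must show the $\cF L^p$-norm genuinely grows.

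For $1\le p<2$ the idea is to show that $\widehat{f_n\circ\f}(\xi)$ is bounded below by a constant on an interval of length $\asymp n$ centered away from the origin, which forces $\|\widehat{f_n\circ\f}\|_{L^p}\gtrsim n^{1/p}$; dividing by the crude $L^\infty$ bound $\|\widehat{f_n\circ\f}\|_{L^\infty}\le \|f_n\circ\f\|_{L^1}\lesssim 1$ is not quite the right move — instead one argues directly. Concretely, write
\[
\widehat{f_n\circ\f}(\xi)=\int_\R \chi(\f(t))\, e^{2\pi i (n\f(t)-\xi t)}\,dt .
\]
The phase is $\psi_\xi(t)=n\f(t)-\xi t$, with stationary point determined by $n\f'(t)=\xi$. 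On the interval $I$ where $|\f''|\ge\rho$ one has $|\psi_\xi''(t)|=n|\f''(t)|\ge n\rho$, so the stationary phase / van der Corput estimate of Lemma \ref{vandercorput} with $k=2$ gives an \emph{upper} bound $|\widehat{f_n\circ\f}(\xi)|\lesssim n^{-1/2}$ for the contribution of $I$ — but this is an upper bound, the wrong direction. The correct route is the reverse: estimate the $\cF L^{p'}$-norm of the \emph{inverse} problem. Since $1\le p<2$ means $p'>2$, and by duality $\|g\|_{\cF L^p}=\|\hat g\|_{L^p}\ge \|\hat g\|_{L^{p'}} \cdot (\text{no})$... this inequality also goes the wrong way on unbounded domains. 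The clean argument is: use that $f_n\circ\f$ is supported in the fixed compact set $\f^{-1}((0,1))$, hence by Hausdorff–Young applied in the form $\|g\|_{L^{p'}}\le\|\hat g\|_{L^p}$ (valid for $1\le p\le 2$), we get
\[
\|f_n\circ\f\|_{\cF L^p}=\|\widehat{f_n\circ\f}\|_{L^p}\ge \|f_n\circ\f\|_{L^{p'}} \cdot 0
\]
— still wrong. So the genuine mechanism must be: apply van der Corput to see that $\|f_n\circ\f\|_{\cF L^\infty}\lesssim n^{-1/2}$ \emph{on the relevant frequency band}, combine with $\|f_n\circ\f\|_{\cF L^2}\asymp 1$ (from the $p=2$ computation), and interpolate the two on a frequency interval of length $\asymp n$: if $\hat h$ is supported (essentially) in an interval of length $\asymp n$ with $\|\hat h\|_{L^\infty}\lesssim n^{-1/2}$ and $\|\hat h\|_{L^2}\asymp 1$, then $\|\hat h\|_{L^p}\ge \|\hat h\|_{L^2}^{?}\|\hat h\|_{L^\infty}^{?}$ — no, Hölder gives an upper bound for $L^2$ in terms of $L^p$ and $L^\infty$: $\|\hat h\|_{L^2}^2\le\|\hat h\|_{L^p}^p\|\hat h\|_{L^\infty}^{2-p}$, hence $1\lesssim\|\hat h\|_{L^p}^p\cdot n^{-(2-p)/2}$, which yields exactly $\|\hat h\|_{L^p}\gtrsim n^{(2-p)/(2p)}=n^{1/p-1/2}$.

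So the key steps, in order, are: (i) reduce to $1\le p<2$ since $p=2$ is immediate by change of variables and Plancherel; (ii) show $\|f_n\circ\f\|_{\cF L^2}\asymp 1$ via $\|f_n\circ\f\|_{L^2}\asymp\|\chi\|_{L^2}$; (iii) localize $\widehat{f_n\circ\f}$ to the frequency band where the stationary point $\f'(t)=\xi/n$ falls in $I$, which is an interval of length $\asymp n$ (using that $\f'$ is bounded with bounded inverse on $I$ and $\f'' $ is bounded), and observe that \emph{off} a fixed bounded band the Fourier transform decays rapidly (repeated integration by parts, since then $\psi_\xi'$ is bounded below), so essentially all the $L^2$ mass of $\widehat{f_n\circ\f}$ lies in a band of length $\asymp n$; (iv) apply van der Corput (Lemma \ref{vandercorput}, $k=2$) on $I$, together with non-stationary phase integration by parts on the complement of a neighbourhood of the stationary point, to get the uniform bound $|\widehat{f_n\circ\f}(\xi)|\lesssim n^{-1/2}$ on that band, using the lower bound $|\f''|\ge\rho$ on $I$ and $|({\rm supp}\,\chi)\cap\f(I)|>0$ to ensure the relevant piece of the integral does not cancel spuriously in the $L^2$ sense; (v) combine (ii)–(iv) by Hölder's inequality $\|\widehat{f_n\circ\f}\|_{L^2}^2\le\|\widehat{f_n\circ\f}\|_{L^p}^p\,\|\widehat{f_n\circ\f}\|_{L^\infty(\text{band})}^{2-p}$ to conclude $\|f_n\circ\f\|_{\cF L^p}\gtrsim n^{1/p-1/2}$.

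The main obstacle I anticipate is step (iii)–(iv): making rigorous that the Fourier transform is concentrated, in the $L^2$ sense, on a frequency band of length comparable to $n$ and not smaller, and that the van der Corput bound of order $n^{-1/2}$ genuinely holds \emph{uniformly} across that whole band rather than just near a single stationary point. This requires splitting the integral into the part where $t$ is near the stationary point (controlled by van der Corput on subintervals of $I$, possibly after using that $\f''$ does not vanish there) and the part away from it (controlled by non-stationary phase / repeated integration by parts, where the gain is much faster than $n^{-1/2}$), and then checking that the $t$-support constraint from $\chi\circ\f$ together with $|({\rm supp}\,\chi)\cap\f(I)|>0$ leaves a band of $\xi$'s of full length $\asymp n$ on which the lower bound for $\|\widehat{f_n\circ\f}\|_{L^2}$ restricted to that band is still $\gtrsim 1$. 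Handling the boundary effects (where the stationary point exits $I$) and the dependence of all constants only on $p,\f,\chi$ is the delicate bookkeeping; everything else is routine.
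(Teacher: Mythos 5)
Your final mechanism — interpolating between an $L^2$ lower bound and an $L^\infty$ van der Corput upper bound — is the right numerology, and the Hölder step $\|\hat h\|_{L^2}^2\le\|\hat h\|_{L^p}^p\,\|\hat h\|_{L^\infty}^{2-p}$ is correct; the problem is the function to which you apply it. You need $\|\widehat{f_n\circ\f}\|_{L^\infty}\lesssim n^{-1/2}$, and this is \emph{false} under the stated hypotheses. The bound $|\f''|\ge\rho$ is assumed only on the subinterval $I\subset(0,1)$, while $f_n\circ\f$ is supported on $\f^{-1}({\rm supp}\,\chi)$, which the hypothesis $|({\rm supp}\,\chi)\cap\f(I)|>0$ allows to be much larger than $I$. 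Take $\f$ affine, $\f(t)=at+b$, on a subinterval $J\subset(0,1)\setminus I$ with $\f(J)\cap{\rm supp}\,\chi$ of positive measure; then at $\xi=na$ the phase $n\f(t)-\xi t$ is constant on $J$, the contribution of $J$ to $\widehat{f_n\circ\f}(na)$ equals $e^{2\pi inb}\int_J\chi(\f(t))\,dt$, a nonzero constant independent of $n$, and $\|\widehat{f_n\circ\f}\|_{L^\infty}\asymp 1$. Your step (iv) cannot repair this: on $J$ there is a stationary point but no derivative of the phase of order $\ge 2$ is bounded below, so neither van der Corput with any $k\ge2$ nor non-stationary integration by parts applies.

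The paper avoids this precisely by the duality step you considered and then discarded. One starts from
\[
0<C=\int_I\chi(\f(t))\,dt=\Big|\int_{\R}(f_n\circ\f)(t)\,\overline{\mathds{1}_I(t)\,e^{2\pi i n\f(t)}}\,dt\,\Big|,
\]
applies Parseval and then Hölder with exponents $p,p'$ to obtain
\[
C\le \|f_n\circ\f\|_{\cF L^p}\,\|\mathds{1}_I\,e^{-2\pi i n\f}\|_{\cF L^{p'}},
\]
and now runs the van der Corput estimate and the $L^2$–$L^\infty$ interpolation (the same log-convexity as your Hölder step) on the \emph{dual test function} $\mathds{1}_I\,e^{-2\pi i n\f}$, which \emph{is} supported in $I$, exactly where $|\f''|\ge\rho$. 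This yields $\|\mathds{1}_I e^{-2\pi in\f}\|_{\cF L^{p'}}\lesssim n^{-1/2+1/p'}$, hence $\|f_n\circ\f\|_{\cF L^p}\gtrsim n^{1/2-1/p'}=n^{1/p-1/2}$. The localization to $I$ is achieved by pairing against a function supported in $I$, not by multiplying $f_n\circ\f$ by $\mathds{1}_I$; the latter is not a bounded operation on $\cF L^p$ for $p\ne2$, so it would not have helped.
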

\begin{proof} It follows the pattern of
 \cite[page 219]{lebedev94}. In place of the
  sequence $\{e^{int}\}_{n\geq1}$ used there,
   here we consider the smooth compactly supported sequence of
 functions $\{f_n\}_{n\geq1}$ in \eqref{fn}.

The assumptions on $\chi$,
Parseval's identity, and
H\"older's inequality yield
\begin{align}
0<C&=\int_I \chi(\f(t))\,dt=\left|\int_{\R} f_n(\f(t)) (\mathds{1}_{I}(t)e^{-2\pi i n\f(t)})\,dt\right|\nonumber\\
&=\left|\int_{\R} \cF^{-1}(f_n\circ\f)(\eta)\cF(\mathds{1}_{I}e^{-2\pi i n\f})(\eta)\,d\eta\right|\nonumber\\
&\leq
\|\cF(f_n\circ\f)\|_{L^p}\|\cF(\mathds{1}_{I}e^{-2\pi
i n\f})\|_{L^{p'}}=\|f_n\circ\f\|_{\cF
L^p}\|\mathds{1}_{I}e^{-2\pi i
n\f}\|_{\cF L^{p'}}, \label{st1}
\end{align}
for $1/p+1/p'=1$.

 Let us estimate $\|\mathds{1}_{I}e^{-2\pi i n\f}
 \|_{\cF L^{p'}}$. The van der Corput Lemma (Lemma
 \ref{vandercorput}),
 for $\lambda= 2\pi\rho n$, $\phi(t)=\phi_{n,\eta}(t)
 =-(\f(t)/\rho+t\eta/(\rho n))$, hence
$|\phi^{''}(t)|=\displaystyle\left|
\frac{\f^{''}(t)}{\rho}\right|\geq
1$, $[a,b]=\overline{I}$,
gives
$$\left|\cF( \mathds{1}_{I}e^{-2\pi i n\f})(\eta)\right|
=\left|\int_a^b e^{2\pi i
\rho n\phi(t)}\,dt\right|\leq
c_2 (2\pi \rho n)^{-1/2},
\quad \forall
n\in\N,\,\,\forall \eta\in\R,
$$
with the constant $c_2$
independent of $n,\eta$.
Taking the $L^\infty$-norm,
\begin{equation}\label{car1}
\|\mathds{1}_{I}e^{-2\pi
i n\f}\|_{\cF L^{\infty}}\leq
c(\rho) n^{-1/2}.
\end{equation}
On the other hand,
\begin{equation}\label{car2}
\|\mathds{1}_{I}e^{-2\pi
i n\f}\|_{\cF
L^{2}}=\|\mathds{1}_{I}e^{-2\pi
i n\f}\|_{ L^{2}}=|I|^{1/2}.
\end{equation}
For $1<p<2$, (hence
$2<p'<\infty$), H\"older's
inequality gives
\begin{align*}
\int_{\R}\left|\cF( \mathds{1}_{I}e^{-2\pi i n\f})(\eta)\right|^{p'}\,d\eta &\leq\|\left|\cF( \mathds{1}_{I}e^{-2\pi i n\f})\right|^{p'-2}\|_{L^\infty}\|\left|\cF( \mathds{1}_{I}e^{-2\pi i n\f})\right|^{2}\|_{L^1}\\
&=\| \mathds{1}_{I}e^{-2\pi i
n\f}\|^{p'-2}_{\cF L^\infty} \|
\mathds{1}_{I}e^{-2\pi i
n\f}\|^2_{L^2},
\end{align*}
whence
$$\| \mathds{1}_{I}e^{-2\pi i n\f}\|_{\cF L^{p'}}\leq
\| \mathds{1}_{I}e^{-2\pi i n\f}\|^{1-2/p'}_{
\cF L^{\infty}}\| \mathds{1}_{I}e^{-2\pi i n\f}\|^{2/p'
}_{ L^{2}}\leq c(p,\f) n^{-1/2+1/p'}.
$$
This last estimate holds for
$p=1,2$ too (because of
\eqref{car1} and
\eqref{car2}), and inserted
in \eqref{st1} gives
$$\|f_n\circ\f\|_{\cF L^p}\geq c(p,\f,\chi)\, n^{1/p-1/2},
$$
for $1\leq p\leq 2$, as
desired.
\end{proof}

The generalization to  dimension $d\geq
1$ reads as follows.
\begin{corollary} Let $\f$ be as in Proposition \ref{pro1}
and $f_n$ defined in
\eqref{fn}. We define
\begin{equation}\label{fntilde}
\tilde{f}_n(t_1,\dots,t_d)=f_n(t_1)\cdots
f_n(t_d),\quad
\tilde{\f}(t_1,\dots,t_d)=(\f(t_1),\dots,\f(t_d)),
\end{equation}
then
\begin{equation}\label{estfn}
\| \tilde{f}_n\circ \tilde{\f}\|_{\cF
L^p(\rd)} \geq c(p,\f,\chi)
\,n^{d(1/p-1/2)},
\end{equation}
for $1\leq p\leq 2$.
\end{corollary}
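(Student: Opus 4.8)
The plan is to reduce the claim to the one-dimensional estimate \eqref{fnest} by exploiting the tensor-product structure of $\tilde f_n\circ\tilde\f$. From the definitions in \eqref{fntilde} we have, for every $(t_1,\dots,t_d)\in\rd$,
\[
(\tilde f_n\circ\tilde\f)(t_1,\dots,t_d)=\prod_{k=1}^d f_n(\f(t_k)),
\]
so that $\tilde f_n\circ\tilde\f$ is the $d$-fold tensor product of the single-variable function $f_n\circ\f$ with itself. Since $f_n\in\cC^\infty_0(\R)$ (see \eqref{fn}) and $\f$ is a $\cC^\infty$ diffeomorphism, the function $f_n\circ\f$ is smooth with compact support; in particular it lies in $\cF L^p(\R)$ for every $p$, and all the integrals appearing below converge absolutely.

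First I would use that the $d$-dimensional Fourier transform of a tensor product is the tensor product of the one-dimensional Fourier transforms, and that, by Fubini's theorem, the $L^p(\rd)$-norm of a tensor product of functions factorizes into the product of their one-dimensional $L^p$-norms. Applying both facts to $\tilde f_n\circ\tilde\f$ yields
\[
\|\tilde f_n\circ\tilde\f\|_{\cF L^p(\rd)}=\|\cF(\tilde f_n\circ\tilde\f)\|_{L^p(\rd)}=\prod_{k=1}^d\|\cF(f_n\circ\f)\|_{L^p(\R)}=\|f_n\circ\f\|_{\cF L^p(\R)}^d.
\]
Then I would simply insert the lower bound \eqref{fnest} of Proposition \ref{prop1}, valid for $1\le p\le2$, to obtain
\[
\|\tilde f_n\circ\tilde\f\|_{\cF L^p(\rd)}\ge\bigl(c(p,\f,\chi)\,n^{1/p-1/2}\bigr)^d=c(p,\f,\chi)^d\,n^{d(1/p-1/2)},
\]
which is \eqref{estfn} after renaming the constant.

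There is no genuine obstacle in this passage: all the substantive content is already contained in the one-dimensional Proposition \ref{prop1}, and the only point requiring (entirely routine) justification is the legitimacy of the two tensor-product identities, which is immediate here because $f_n\circ\f$ is smooth and compactly supported, so the relevant double integrals are absolutely convergent and Fubini's theorem applies without restriction.
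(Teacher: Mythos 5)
Your argument is correct: $\tilde f_n\circ\tilde\f$ is the $d$-fold tensor product of $f_n\circ\f$, so both the Fourier transform and the $L^p$-norm factorize, giving $\|\tilde f_n\circ\tilde\f\|_{\cF L^p(\rd)}=\|f_n\circ\f\|_{\cF L^p(\R)}^d$, and raising the one-dimensional bound \eqref{fnest} to the $d$-th power (legitimate since both sides are nonnegative) yields \eqref{estfn}. The paper states this corollary without proof, and the tensor-product computation you give is precisely the routine justification that the authors omitted.
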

We also need the following
result.
\begin{lemma}
Let $h\in\cS(\rd)$, $\la
t\ra^m=(1+|t|^2)^{m/2}$, $m\in\R$.
Then, for $y\in\rd$ and $1\leq
p\leq\infty$,
\begin{equation}\label{trans}
    \| h T_y\la\cdot\ra^m\|_{M^p} \leq c(h,p) \la y\ra^m.
\end{equation}
\end{lemma}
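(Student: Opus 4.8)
The plan is to prove the estimate $\| h\, T_y\langle\cdot\rangle^m\|_{M^p}\leq c(h,p)\langle y\rangle^m$ by passing through the characterization of the $M^p$-norm in \eqref{normmp}, since $h$ being Schwartz makes the product $h\,T_y\langle\cdot\rangle^m$ well localized in frequency after multiplying by a single bump. First I would write $\langle\cdot\rangle^m = T_y^{-1}(T_y\langle\cdot\rangle^m)$ is not the right move; rather, I would use the elementary pointwise bound $\langle t-y\rangle^m\leq C\langle t\rangle^{|m|}\langle y\rangle^{|m|}$ (Peetre's inequality) to reduce to showing that $h(x)\langle x\rangle^{|m|}$, multiplied by an $x$-independent factor $\langle y\rangle^{|m|}$, has controlled $M^p$-norm. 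But this only gives the crude bound $\langle y\rangle^{|m|}$, not $\langle y\rangle^m$; for $m\geq 0$ this is fine, and for $m<0$ one needs to exploit that on the support of $h$, $\langle t-y\rangle^m$ is actually \emph{small} of order $\langle y\rangle^m$ when $|y|$ is large. The cleaner route is: split into the region $|y|\leq 2R$ where $h$ is essentially supported in a ball of radius $R$ — there $\langle y\rangle^m\asymp 1$ and one just needs $\|h\,T_y\langle\cdot\rangle^m\|_{M^p}\leq C$ uniformly, which follows since $h\,T_y\langle\cdot\rangle^m$ stays in a bounded subset of $\cS(\rd)$ — and the region $|y|> 2R$, where on $\operatorname{supp} h$ one has $\langle t-y\rangle\asymp\langle y\rangle$, so $h(t)T_y\langle t\rangle^m = \langle y\rangle^m\, h(t)\,\psi_y(t)$ with $\psi_y(t):=\langle y\rangle^{-m}\langle t-y\rangle^m$ lying in a bounded subset of $C^\infty$ on $\operatorname{supp} h$ (all derivatives uniformly bounded in $y$).

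Concretely, the key steps in order are: (1) fix a cutoff $\phi\in\cC_0^\infty(\rd)$ with $\phi\equiv 1$ on a neighborhood of $\operatorname{supp} h$, so that $h\,T_y\langle\cdot\rangle^m = \phi\cdot(h\,T_y\langle\cdot\rangle^m)$ and hence, by Lemma \ref{lloc} (the local equivalence $M^p\asymp\cF L^p$ on the fixed compact set $\operatorname{supp}\phi$), it suffices to bound $\|h\,T_y\langle\cdot\rangle^m\|_{\cF L^p}$; (2) bound the $\cF L^p$-norm by $C\sup_{|\alpha|\leq d+1}\|\partial^\alpha(h\,T_y\langle\cdot\rangle^m)\|_{L^1}$, exactly as in the $M^1$-boundedness argument in Section \ref{section4} (this works for all $1\leq p\leq\infty$ because the function is compactly supported, so $L^1\hookrightarrow\cF L^p$ locally, or one argues via $\cF L^1\hookrightarrow\cF L^p$); (3) apply the Leibniz rule: $\partial^\alpha(h\,T_y\langle\cdot\rangle^m) = \sum_{\beta\leq\alpha}\binom{\alpha}{\beta}\partial^{\alpha-\beta}h\,\cdot\,(\partial^\beta\langle\cdot\rangle^m)(\cdot - y)$, and use that $\partial^\beta\langle t\rangle^m = q_\beta(t)\langle t\rangle^{m-|\beta|}$ for suitable smooth $q_\beta$ with $|q_\beta(t)|\lesssim\langle t\rangle^{|\beta|}$, so $|\partial^\beta\langle t-y\rangle^m|\lesssim\langle t-y\rangle^m$; (4) on $\operatorname{supp} h$, use Peetre's inequality $\langle t-y\rangle^m\leq 2^{|m|}\langle t\rangle^{|m|}\langle y\rangle^m$ when $m\geq 0$, and when $m<0$ split as above — but in fact Peetre's inequality in the form $\langle t-y\rangle^m\leq 2^{|m|}\langle y\rangle^m\langle t\rangle^{|m|}$ holds for all real $m$ — so that $|\partial^\alpha(h\,T_y\langle\cdot\rangle^m)(t)|\lesssim\langle y\rangle^m\sum_{\beta\leq\alpha}|\partial^{\alpha-\beta}h(t)|\langle t\rangle^{|m|}$, which is integrable in $t$ with an $L^1$-norm $\lesssim_{h,m}\langle y\rangle^m$; (5) collect the estimates over $|\alpha|\leq d+1$ to conclude.

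The main obstacle — really the only subtlety — is the direction of the inequality for negative $m$: a naive application of Peetre's inequality gives the weight $\langle y\rangle^{|m|}$, which is the wrong (growing) power when $m<0$, whereas the statement claims the decaying power $\langle y\rangle^m$. The resolution is precisely that one wants an \emph{upper} bound on $\|h\,T_y\langle\cdot\rangle^m\|_{M^p}$, and Peetre's inequality is symmetric enough: for $m<0$ write $\langle t-y\rangle^m = (\langle t-y\rangle^{-1})^{|m|}$ and use $\langle t-y\rangle\geq c\langle y\rangle/\langle t\rangle$ (valid for all $t$), which gives $\langle t-y\rangle^m\leq C\langle y\rangle^m\langle t\rangle^{|m|}$ — the \emph{same} inequality as for $m\geq 0$, with $\langle y\rangle$ to the correct signed power $m$ and $\langle t\rangle$ to the harmless power $|m|$ that gets absorbed by the Schwartz decay of $h$. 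Thus in every case one lands on $\langle y\rangle^m$ times a quantity controlled by the Schwartz seminorms of $h$ and finitely many derivatives, independent of $y$, which is exactly \eqref{trans}. No genuinely hard analysis is needed; the proof is a localization-plus-Leibniz-plus-Peetre computation, and the statement's only trap is keeping the sign of the exponent of $\langle y\rangle$ straight.
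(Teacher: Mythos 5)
There is a genuine gap at step (1), and it unravels the logical structure even though the core computation is fine. You write ``fix a cutoff $\phi\in\cC_0^\infty(\rd)$ with $\phi\equiv 1$ on a neighborhood of $\operatorname{supp} h$'' and then invoke Lemma \ref{lloc} to replace the $M^p$-norm with the $\cF L^p$-norm. But $h$ is only assumed to be Schwartz, not compactly supported, and in the application it is the Fourier transform of a compactly supported bump and therefore has \emph{full} support. So no such $\phi$ exists, and since Lemma \ref{lloc} is only available on a fixed compact set, the reduction $\|\cdot\|_{M^p}\asymp\|\cdot\|_{\cF L^p}$ is simply not at your disposal. The remark that $h$ is ``essentially supported'' in a ball of radius $R$ does not repair this: for a function that merely decays, the error from cutting off lives outside the compact set and is not controlled by $\cF L^p$ data on that set. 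The same issue invalidates step (2): the bound $\|u\|_{\cF L^p}\lesssim\sup_{|\alpha|\le d+1}\|\partial^\alpha u\|_{L^1}$ is fine, but $\|u\|_{\cF L^p}$ does not dominate $\|u\|_{M^p}$ without compact support (for instance $u\equiv 1$ is in $M^\infty$ but not in $\cF L^\infty$, and the inclusions fail in both directions for general $p$).

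What you need is an \emph{intrinsic} estimate of the $M^p$-norm that does not pass through compactly supported localization. That is exactly what the paper does: it takes a fixed window $g\in\cS$, writes out $V_g(h\,T_y\langle\cdot\rangle^m)(x,\omega)$ explicitly, integrates by parts $N_1$ times in $t$ to produce the factor $\langle\omega\rangle^{-2N_1}$, then uses precisely your Leibniz plus Peetre computation — $|\partial^\alpha_t\langle t-y\rangle^m|\lesssim\langle t-y\rangle^{m-|\alpha|}\lesssim\langle y\rangle^{m-|\alpha|}\langle t\rangle^{|m-|\alpha||}$ — together with Schwartz decay of $g$ (giving $\langle x\rangle^{-N_2}\langle t\rangle^{N_2}$) and of $h$ (giving $\langle t\rangle^{-N_3}$). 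After integrating in $t$, this yields a pointwise bound $|V_g(h\,T_y\langle\cdot\rangle^m)(x,\omega)|\lesssim\langle y\rangle^m\langle x\rangle^{-N_2}\langle\omega\rangle^{-2N_1}$, which is in $L^p_{x,\omega}$ for suitable $N_1,N_2$, giving \eqref{trans}. Your concern about the sign of the exponent of $\langle y\rangle$ is handled correctly by this form of Peetre's inequality (it gives $\langle y\rangle^{m-|\alpha|}\le\langle y\rangle^m$, the decaying power when $m<0$), and your steps (3)--(5) are essentially this same calculation. So the fix is not a new idea but a different wrapper: estimate $V_g$ directly rather than appealing to a compact-support equivalence that is not applicable here. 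Alternatively, one could observe that the Leibniz--Peetre bound actually shows that $\langle y\rangle^{-m}h\,T_y\langle\cdot\rangle^m$ stays in a bounded subset of $\cS(\rd)$ uniformly in $y$, and then invoke continuity of $\cS\hookrightarrow M^p$; but that continuity must itself be justified via STFT estimates of the same kind, so the paper's route is the economical one.
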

\begin{proof} For a non-zero window function $g\in\cS(\rd)$, we have
$$V_g( hT_y\la\cdot\ra^m)\phas =\intrd
 e^{-2\pi i \o t} \la t-y\ra^m  h(t)  \overline{g(t-x)}\,dt
$$
Let us show that the STFT
$V_g (hT_y\la\cdot\ra^m)$ is
in $L^p(\rdd)$ with the
majorization \eqref{trans}.
For $N_1\in \N$, an
integration by parts gives
$$ V_g( hT_y\la\cdot\ra^m)\phas =(1+|2\pi\o|^2)^{-N_1}
\intrd e^{-2\pi i \o t}
(1-\Delta_t)^{N_1}[ \la
t-y\ra^m
 h(t)  \overline{g(t-x)}] dt.
$$
By Petree's inequality,
$$|\partial^\a_t \la t-y\ra^m|\lesssim
\la t-y\ra^{m-|\a|}\lesssim
\la y\ra^{m-|\a|} \la
t\ra^{|m-|\a||} ,\quad\forall
\a\in\zd_+,\,\, m\in\R.$$ The
functions $g,h$ are in $\cS(\rd)$,
so that
$$|\partial^\b_t g(t-x)|\lesssim \la t-x\ra^{-N_2}
\lesssim \la x\ra^{-N_2}\la
t\ra^{N_2},\,\,
|\partial^\gamma_t
h(t)|\lesssim \la
t\ra^{-N_3},\,\,\forall
N_2,N_3\in\N, \,\,\forall
\beta,\gamma\in\zd_+.
$$

Hence
\begin{align*}|V_g( hT_y\la\cdot\ra^m)\phas|&\lesssim
(1+|\o|^2)^{-N_1}\la
x\ra^{-N_2} \sup_{|\a|\leq
N_1}
 \la y\ra^{m-|\a|}\la t\ra^{|m-|\a||+N_2-N_3}\\
&\lesssim  \la y\ra^{m} \la
t\ra^{|m|+N_1+N_2-N_3}\la
\o\ra^{-2N_1}\la
x\ra^{-N_2},\quad \forall
N_1,N_2,N_3\in\N.
\end{align*}
Choosing $N_1, N_2$ such that
$2pN_1>d$, $pN_2>2$, and
$N_3$ such that
$p(N_3-N_1-N_2-|m|)>d$, we
attain the desired result.
\end{proof}

We use the previous lemma to compute the action of the multiplier
$\la D\ra^m$ on the functions $\tilde{f}_n$. Precisely,
\begin{corollary}
Let $m\in\R$ and $\tilde{f}_n$ defined
in \eqref{fntilde}. Then,
\begin{equation}\label{dfnf}
    \|\la D\ra^{m} \tilde{f}_n\|_{M^p}\leq c(\chi,p) n^m.
\end{equation}
\begin{proof} Using the invariance of the modulation spaces $M^p$ under \ft, we have
$$\|\la D\ra^{m} \tilde{f}_n\|_{M^p}\asymp \|\la \cdot \ra^{m} \widehat{\tilde{f}_n}\|_{M^p}=\|\la \cdot \ra^{m} T_{\tilde{n}} \widehat{\tilde{\chi}}\|_{M^p}=\|(T_{-\tilde{n}}\la \cdot \ra^{m})  \widehat{\tilde{\chi}}\|_{M^p},
$$
with
$\tilde{\chi}(t_1,\dots,t_d)=({\chi}\otimes\cdots\otimes{\chi})(t_1,\dots,t_d)$,
$\chi$ defined in Proposition \ref{prop1}, and
$\tilde{n}=(n,\dots,n)$. The previous lemma and the estimate $\la
\tilde{n}\ra \lesssim d^{1/2} n$  yield the majorization
\eqref{dfnf}.
\end{proof}
\end{corollary}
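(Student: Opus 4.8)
The plan is to transfer the estimate to the Fourier side, where the Bessel potential $\la D\ra^m$ becomes multiplication by the algebraically growing weight $\la\cdot\ra^m$, and then to reduce matters to the preceding lemma, namely to the estimate \eqref{trans}. First I would use that the Fourier transform is an isomorphism of $M^p=M^{p,p}$ onto itself (a classical property, see \cite{grochenig}; at the level of the STFT it amounts to a rotation of the phase space, which preserves $\|\cdot\|_{L^p(\R^{2d})}$). Since $\widehat{\la D\ra^m\tilde f_n}=\la\cdot\ra^m\,\widehat{\tilde f_n}$, this yields
\[
\|\la D\ra^m\tilde f_n\|_{M^p}\asymp\big\|\la\cdot\ra^m\,\widehat{\tilde f_n}\,\big\|_{M^p}.
\]
Next I would compute $\widehat{\tilde f_n}$ explicitly: from $f_n=M_n\chi$ we get $\widehat{f_n}=T_n\widehat\chi$, and the tensor-product structure of $\tilde f_n$ in \eqref{fntilde} gives $\widehat{\tilde f_n}=T_{\tilde n}\widehat{\tilde\chi}$, with $\tilde\chi=\chi\otimes\cdots\otimes\chi\in\cS(\R^d)$ and $\tilde n=(n,\dots,n)$.

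Then $\la\eta\ra^m\,\widehat{\tilde f_n}(\eta)=\la\eta\ra^m\,\widehat{\tilde\chi}(\eta-\tilde n)$, and a translation of the variable by $\tilde n$ --- an isometry on $M^p$ --- rewrites this, up to that isometry, as $\big(T_{-\tilde n}\la\cdot\ra^m\big)\,\widehat{\tilde\chi}$. This is precisely the shape handled by the preceding lemma, applied with the fixed Schwartz function $h=\widehat{\tilde\chi}$ and shift $y=-\tilde n$; estimate \eqref{trans} thus bounds it by $\lesssim\la\tilde n\ra^m$, with a constant depending only on $\chi$ and $p$. Finally, since $\la\tilde n\ra=(1+d\,n^2)^{1/2}\asymp n$ uniformly for $n\ge1$, one has $\la\tilde n\ra^m\le c(d)\,n^m$ for every $m\in\R$ (regardless of the sign of $m$), and \eqref{dfnf} follows.

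The argument presents no real obstacle; the only point requiring attention is that every reduction --- the Fourier-transform isomorphism, the translation invariance of $M^p$, and above all the bound \eqref{trans} --- be uniform in $n$. This is automatic, because in \eqref{trans} the constant $c(h,p)$ does not depend on the shift $y$, the entire $y$-dependence being the explicit factor $\la y\ra^m$; hence the estimate deteriorates only through the harmless algebraic factor $\la\tilde n\ra^m$.
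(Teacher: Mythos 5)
Your proof is essentially identical to the paper's: both transfer to the Fourier side via the $M^p$-invariance of the Fourier transform, compute $\widehat{\tilde f_n}=T_{\tilde n}\widehat{\tilde\chi}$, use translation invariance of $M^p$ to reduce to $\|(T_{-\tilde n}\la\cdot\ra^m)\widehat{\tilde\chi}\|_{M^p}$, and then invoke the preceding lemma \eqref{trans} with $h=\widehat{\tilde\chi}$, $y=-\tilde n$, concluding with $\la\tilde n\ra\asymp n$. The only (minor) improvement is that you explicitly note $\la\tilde n\ra^m\asymp n^m$ holds for either sign of $m$, whereas the paper merely writes $\la\tilde n\ra\lesssim d^{1/2}n$.
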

\medskip
We can now prove the
sharpness of Theorems
\ref{maintheorem2} and
\ref{maintheorem}. It is
clear that it would be
sufficient to prove the
sharpness of Theorem
\ref{maintheorem2}, because
of Lemma \ref{lloc} and the
fact that our operators have
symbols compactly supported
in $x$. However, we start
with showing the optimality
of Theorem \ref{maintheorem}
first, and then we show how
the argument above in fact
gives the optimality of
Theorem \ref{maintheorem2} as
well.\par
\subsection{Sharpness of Theorem
\ref{maintheorem}}\ \\
{\bf Sharpness for} $1\leq
p\leq2$. Consider the FIO
$$T_{\tilde{\f}} f(x)= f\circ \tilde{\f}(x)=
\intrd e^{2\pi i
\tilde{\f}(x) \eta}
\hat{f}(\eta)\,d\eta,$$ where
$\tilde{\f}$ is defined in
\eqref{fntilde}. We require
that
the one-dimensional
diffeomorphism $\f$ satisfies
the assumptions of
Proposition \ref{prop1} and
the additional hypothesis
\begin{equation}\label{bound}0<c\leq |\f'(x)|\leq C,
\quad \forall x \in \R,
\end{equation}
Then, the phase
$\Phi(x,\eta)= \tilde{\f}(x)
\eta$ fulfills the standard
assumptions in the
Introduction; in particular
it is non-degenerate. Notice
that $T_{\tilde{\f}}$ maps
$\mathcal{C}^\infty_0(\R^d)$
into itself and ${\rm
supp}\,T_{\tilde{\f}}f\subset
(0,1)^d$ if ${\rm
supp}\,{f}\subset (0,1)^d$.
\par We are interested in a
FIO with symbol $\sigma$ of
order $m$ and with compact
support with respect to the
$x$-variable. So, let
$G\in\cC^{\infty}_0(\rd)$,
 $G\geq 0$ and $G\equiv1
$ on $[0,1]^d$, and consider
the FIO $F$ defined by
\begin{equation}\label{fioF}
Ff(x)=G(x)[(T_{\tilde{\f}} \la D\ra^m
)f](x)=\intrd e^{2\pi i \tilde{\f}(x)
\eta} G(x)\la\eta\ra^m
\hat{f}(\o)\,d\o.
\end{equation}
The symbol
$\sigma(x,\o)=G(x)\la\eta\ra^m
$ is of order $m$ with
compact support in $x$. So,
if $m$ satisfies
\eqref{soglia}, Theorem
\ref{maintheorem} assures the
boundedness of  $F$ on $M^p$.
We now show that this
threshold is sharp for $1\leq
p\leq2$. Indeed, consider the
functions $\tilde{f}_n$ in
\eqref{fntilde}. They are
supported in $(0,1)^d$, so
$T_{\tilde{\f}}\tilde{f}_n$
are. Hence, applying the
estimate \eqref{estfn} and
Lemma \ref{lloc}, we obtain
\begin{align*}
n^{d(1/p-1/2)}&\lesssim \|
\tilde{f}_n\circ \tilde{\f}\|_{\cF
L^p(\rd)}=\|
T_{\tilde{\f}}\tilde{f}_n\|_{\cF
L^p(\rd)}
=\| GT_{\tilde{\f}}\tilde{f}_n\|_{\cF L^p(\rd)}\\&\asymp \| GT_{\tilde{\f}}\tilde{f}_n\|_{M^p(\rd)}=\| GT_{\tilde{\f}}\la D\ra^m\la D\ra^{-m}\tilde{f}_n\|_{M^p(\rd)}\\
&\lesssim \|F\|_{M^p\rightarrow M^p}
\|\la
D\ra^{-m}\tilde{f}_n\|_{M^p(\rd)}\lesssim
\|F\|_{M^p\rightarrow M^p} \,n^{-m},
\end{align*}
where  the last inequality is
due to \eqref{dfnf}. For
$n\to\infty$, we obtain
$-m\geq d(1/p-1/2)$, i.e.,
\eqref{soglia}. \par

\textbf{Sharpness for
$2<p\leq\infty$.}
  Observe that the adjoint operator
$T^*_{\tilde{\f}}$ of the
above FIO $T_{\tilde{\f}}$ is
still a FIO given by
$$T^*_{\tilde{\f}} f(x)=\frac1{|J_{\tilde{\f}}
(\tilde{\f}^{-1}(x))|}\intrd
e^{2\pi i
\widetilde{\f}^{-1}(x)
\eta}f(\o)\,d\o,$$ with
$\widetilde{\f}^{-1}(x_1,\dots,x_d)=
(\f^{-1}(x_1),\dots,
\f^{-1}(x_d))$ and
$|J_{\tilde{\f}}|$ the
Jacobian of $\tilde{\f}$. Its
phase $\Phi\phas
=\widetilde{\f}^{-1}(x) \eta$
still fulfills the standard
assumptions.\par
 Now, let
$H\in \cC^{\infty}_0(\rd)$ ,
$H\geq 0$, and  $H(x)\equiv
1$ on supp ($G\circ
\f^{-1}$). We define the
operator
\begin{equation}\label{ftilde}
\tilde{F}f(x)=H(x)[\la
D\ra^m T^*_{\tilde{\f}} (G
f)](x).
\end{equation}
Using Theorem
\ref{composition}, it is
easily seen that $\tilde{F}$
is a FIO of order $m$, with
symbol compactly supported in
the $x$-variable. Its adjoint
is given by
\begin{equation}\label{ftildestar}
\tilde{F}^*= G
T_{\tilde{\f}}\la D\ra^m H=
F+ R,
\end{equation}
where $F$ is defined in
\eqref{fioF} and the
remainder $R$ is given by
\[
R f(x)=G(x)
[T_{\tilde{\f}}\la D\ra^m
((H-1)f)](x).\] If we choose
a function
$\tilde{G}\in\cC^{\infty}_0(\rd)$
, $\tilde{G}\equiv 1$ on supp
$G$ we can write
\begin{align*} Rf&=
\tilde{G}(x)G(x)
[T_{\tilde{\f}}\la D\ra^m
((H-1)f)](x)\\&=\tilde{G}(x)T_{\tilde{\f}}[(G\circ
\tilde{\f}^{-1})\la D\ra^m
((H-1)f)](x).
\end{align*}
 By assumptions,  supp $(G\circ \f^{-1})\cap$
supp $(H-1)=\emptyset$, so
that the pseudodifferential
operator
$$f\longmapsto (G\circ \f^{-1})\la D\ra^m ((H-1)f)$$
is a regularizing operator
 (it immediately follows by the composition
  formula of pseudodifferential operators, see e.g.
   \cite[Theorem 18.1.8, Vol. III]{hormander}):
 this means that it maps $\cS'(\rd)$ into $\cS(\rd)$. The operator  $T_{\tilde{\f}}$ is a
 smooth change of variables, so $\tilde{G}(x)T_{\tilde{\f}}$   maps
$\cS(\rd)$  into itself. To
sum up, the remainder
operator $R$ maps $\cS'(\rd)$
into $\cS(\rd)$, hence it is
bounded on $M^{p}$. This
means that $\tilde{F}^*$ is
continuous on some $M^{p}$
iff $F$ is.

The operator   $\tilde{F}$ is
a FIO of the type
\eqref{FIO}, with symbol of
order $m$ and compactly
supported in the $x$
variable. Hence it is bounded
on $M^{p}$ if $m$ fulfills
\eqref{soglia}. We now show
that this threshold is sharp
for $2<p<\infty$. Indeed, if
 $\tilde{F}$ were
bounded on $M^{p}$, then its
adjoint $\tilde{F}^*$ would
be bounded on
$(M^{p})'=M^{p'}$, with
$1<p'<2$, and
 the same for $F$. But the former case gives
 the boundedness of
$F$ on  $M^{p'}$ iff $-m\geq d(1/p'-1/2)=d(1/2-1/p)$,
 that is the
desired threshold. For
$p=\infty$, if $\tilde{F}$
were bounded on
$\mathcal{M}^\infty$, its
adjoint  $\tilde{F}^*$ would
be bounded on
$(\mathcal{M}^\infty)'=M^{1}$
and the former argument
applies.
\subsection{Sharpness of Theorem
\ref{maintheorem2}}\ \\
We start with an elementary
remark. Consider a FIO $T$
satisfying the hypotheses in
the Introduction.
 Suppose
that it does {\it not}
satisfy an estimate of the
type
\[
\|Tu\|_{M^p}\leq
C\|u\|_{M^p},\quad \forall
u\in\mathcal{S}(\R^d),
\]
(hence
$m>-d\left|\frac{1}{2}-\frac{1}{p}\right|$).
Suppose, in addition, that
the distribution kernel of
$K(x,y)$ of $T$ has the
property that the two
projections of ${\rm supp}\,
K$ on $\R^d_x$ and $\R^d_y$
are bounded sets. Then, by
Lemma \ref{lloc} one sees
that that there exist compact
subsets $K,K'\subset\R^d$ and
a sequence of Schwartz
functions $u_n$,
$n\in\mathbb{N}$, such that
\[
{\rm supp}\, u_n\subset
K,\quad {\rm supp}\,
Tu_n\subset K',\quad \forall
n\in\mathbb{N},
\]
and
\[
\|Tu_n\|_{\Fur L^p}\geq
n\|u_n\|_{\Fur L^p},\quad
\forall n\in\mathbb{N}.
\]
Hence $T$ does not extend to
a bounded operator on  $\Fur
L^p_{\rm comp}$, if $1\leq
p<\infty$, nor on the closure
of the test functions in
$\Fur L^\infty_{\rm comp}$,
if $p=\infty$.\par Taking
this fact into account, we
see that the operator
$\tilde{F}$ in \eqref{ftilde}
provides the desired
counterexample for
$2<p\leq\infty$, if
$m>-d\left|\frac{1}{2}-\frac{1}{p}\right|$.\par
Similarly, the operator
$\tilde{F}^\ast$ in
\eqref{ftildestar} provides
the counterexample for $1\leq
p\leq2$.

\section*{Acknowledgements}
The authors would like to thank the
anonymous referee for helpful comments.

\end{document}